\providecommand{\U}[1]{\protect\rule{.1in}{.1in}}
\newtheorem{theorem}{Theorem}
\theoremstyle{plain}
\newtheorem{corollary}{Corollary}
\newtheorem{definition}{Definition}
\newtheorem{lemma}{Lemma}
\newtheorem{proposition}{Proposition}
\newtheorem{remark}{Remark}
\numberwithin{equation}{section}
\numberwithin{equation}{section}
\numberwithin{theorem}{section}
\numberwithin{lemma}{section}
\numberwithin{remark}{section}
\numberwithin{example}{section}
\numberwithin{proposition}{section}
\numberwithin{definition}{section}
\numberwithin{corollary}{section}
\begin{document}
\title[Homological framework ]{Homological framework of noncommutative complex analytic geometry and
functional calculus}
\author{Anar Dosi}
\address{College of Mathematical Sciences, Harbin Engineering University, Nangang
District, Harbin, 150001, China}
\email{dosiev@yahoo.com dosiev@hrheu.edu.cn}
\date{December 23, 2025}
\subjclass[2000]{ Primary 46M20, 16S38; Secondary 18F20,.47A13}
\keywords{Noncommutative analytic geometries, noncommutative Taylor localizations,
transversality, the spectrum of a module with respect a \v{C}ech category,
Putinar spectrum, Taylor spectrum, noncommutative formal $q$-geometry}
\dedicatory{To the blessed memory of Joseph L. Taylor}
\begin{abstract}
In the paper we propose topological homology framework of noncommutative
complex analytic geometries of Fr\'{e}chet algebras, and investigate the
related functional calculus and spectral mapping properties. It turns out that
an ideal analytic geometry of a Fr\'{e}chet algebra $A$ can be described in
terms of a \v{C}ech category over $A$. The functional calculus problem within
a particular \v{C}ech $A$-category, and a left Fr\'{e}chet $A$-module $X$ is
solved in term of the homological spectrum of $X$ with respect to that
category. As an application, we use the formal $q$-geometry of a contractive
operator $q$-plane, and solve the related noncommutative holomorphic
functional calculus problem. The related spectrum is reduced to Putinar
spectrum of a Fr\'{e}chet $q$-module. In the case of a Banach $q$-module we
come up with the closure of its Taylor spectrum.

\end{abstract}
\maketitle

\section{Introduction\label{secInt}}

The methods of topological homology are highly productive in many different
problems of topological algebras and noncommutative geometry. The foundations
of topological homology were developed mainly by A. Ya. Helemskii
\cite{HelHom} and J. L. Taylor \cite{Tay1} in the beginning of 70's of the
previous century. One of the main and surprising achievements of these methods
is to propose an alternative approach to the multioperator functional calculus
problem and the joint spectral theory developed by J. L. Taylor \cite{Tay2}.
Furthermore, the general framework of Taylor's holomorphic functional calculus
launched a promising light toward its noncommutative treatments. Actually, it
was the first steps how to figure out noncommutative complex analytic geometry
based on topological homology methods. The categorical foundations of the
algebraic treatments of the derived analytic geometry were reflected in the
recent manuscript \cite{Oren} by O. Ben-Bassat, J. Kelly and K. Kremnizer.

The problem of noncommutative holomorphic functional calculus can be
considered as a key part of the complex analytic geometry. Taylor's ideas
within noncommutative complex analytic geometry got their further progress
later in the beginning of the present century mainly by O. Yu. Aristov, A. S.
Fainstein, A. Yu. Pirkovskii and the author in their several papers. The
algebraic versions of these ideas were considered in \cite{DosiMMJ},
\cite{DosiTJM}, \cite{DosiBJMA24}, and \cite{DosiAA25}.

As a starting position in the construction of a complex analytic geometry of a
given (noncommutative) finitely generated algebra $\mathfrak{A}$ is to fix its
certain multinormed envelope $A$ representing noncommutative entire functions
in the generators of $\mathfrak{A}$. The choice of $A$ is constrained by the
assumption that we can draw the (pure algebraic) homology over $\mathfrak{A}$
to topological homology over $A$ through the canonical homomorphism
$\mathfrak{A}\rightarrow A$, that is, $\mathfrak{A}\rightarrow A$ should be a
localization in the sense of J. L. Taylor \cite{Tay2}, or a homotopy
epimorphism \cite{Oren}. The next step is to describe the set $\mathfrak{X}$
of all irreducible Banach space representations \cite[6.2]{Hel} of $A$
equipped with a Zariski type (used to be non-Hausdorff) topology. The
description of $\mathfrak{X}$ being not so easy task can be handled whenever
the algebra $A$ is commutative modulo its Jacobson radical, which is another
constraint on $A$. In this case, $\mathfrak{X}$ turns out to be the set
$\operatorname{Spec}\left(  A\right)  $ of all continuous characters of $A$ or
$1$-dimensional irreducible representations of $A$. By an analytic geometry of
$A$, we mean a ringed space $\left(  \mathfrak{X},\mathcal{O}_{\mathfrak{X}%
}\right)  $ of a Fr\'{e}chet algebra presheaf $\mathcal{O}_{\mathfrak{X}}$ on
a topological space $\mathfrak{X}$ containing $\operatorname{Spec}\left(
A\right)  $, so that $A=\mathcal{O}_{\mathfrak{X}}\left(  \mathfrak{X}\right)
$ and $U\supseteq\operatorname{Spec}\left(  \mathcal{O}_{\mathfrak{X}}\left(
U\right)  \right)  $ for every open subset $U\subseteq\mathfrak{X}$. The
functional calculus problem within a geometry $\left(  \mathfrak{X}%
,\mathcal{O}_{\mathfrak{X}}\right)  $ of $A$ is to extend the structure of a
given left Fr\'{e}chet $A$-module $X$ to a left $\mathcal{O}_{\mathfrak{X}%
}\left(  U\right)  $-module $X$ one for an open subset $U\subseteq
\mathfrak{X}$. The problem is closely related to the concept of (joint)
spectrum $\sigma\left(  \mathcal{O}_{\mathfrak{X}},X\right)  $ of $X$ with
respect the presheaf $\mathcal{O}_{\mathfrak{X}}$. In the case of
$\mathfrak{X=}\operatorname{Spec}\left(  A\right)  $, we come up with the
standard analytic geometry of $A$ (one can also refer it to $\mathfrak{A}$).
The presence of a standard geometry strongly depends on the choice of a
multinormed envelope $A$ of $\mathfrak{A}$. It should be a nontrivial
Arens-Michael-Fr\'{e}chet algebra, which is commutative modulo its Jacobson
radical, and the canonical homomorphism $\mathfrak{A}\rightarrow A$ should be
a localization.

The multinormed envelopes of an algebra with respect to a class of its Banach
algebra actions were introduced by O. Yu. Aristov \cite{Aris}. If we choose
the class of all Banach algebra actions of $\mathfrak{A}$, the related
envelope $\widehat{\mathfrak{A}}$ called the Arens-Michael envelope of
$\mathfrak{A}$ (see \cite[5]{Tay1}, \cite[5.2.21]{Hel}) represents the algebra
of all (noncommutative) entire functions in the elements of $\mathfrak{A}$.
The choice of Banach PI-algebra or Banach N-algebra actions of $\mathfrak{A}$
result in the PI-envelope $\widehat{\mathfrak{A}}^{\text{PI}}$ and the
N-envelope $\widehat{\mathfrak{A}}^{\text{N}}$ of $\mathfrak{A}$,
respectively. The class of Banach PI-algebras refers to the Banach algebras
with polynomial identities whereas the class Banach N-algebras consists of
Banach algebras with their nilpotent ideals whose quotients are commutative.
These envelopes are larger than the Arens-Michael envelope
$\widehat{\mathfrak{A}}$, and they are distinct in general. In some particular
cases, they coincide, and in some other cases they can be trivial depending on
the base algebra $\mathfrak{A}$ (see \cite{DosiSS}).

If $\mathfrak{A}=\mathbb{C}\left[  x_{1},\ldots,x_{n}\right]  $ is the
polynomial algebra, then we come up with the standard complex geometry
$\left(  \mathbb{C}^{n},\mathcal{O}\right)  $ of the analytic space
$\mathbb{C}^{n}$ with the Fr\'{e}chet algebra sheaf $\mathcal{O}$ of germs of
holomorphic functions on $\mathbb{C}^{n}$. In this case,
\[
A=\widehat{\mathfrak{A}}=\widehat{\mathfrak{A}}^{\text{PI}}%
=\widehat{\mathfrak{A}}^{\text{N}}=\mathcal{O}\left(  \mathbb{C}^{n}\right)
\]
is the algebra of all entire functions in $n$ complex variables. The related
functional calculus problem is solved in terms of the joint spectrum
\cite{Tay}, \cite[2.5]{EP}, which is well known as the Taylor spectrum of a
Banach $A$-module $X$.

In the case of the universal enveloping algebra $\mathfrak{A}=\mathcal{U}%
\left(  \mathfrak{g}\right)  $ of a finite dimensional nilpotent Lie algebra
$\mathfrak{g}$, we have $\widehat{\mathfrak{A}}^{\text{PI}}%
=\widehat{\mathfrak{A}}^{\text{N}}$ and it is reduced to the algebra of all
formally radical entire functions $\mathcal{F}_{\mathfrak{g}}\left(
\mathbb{C}^{m}\right)  $ in elements of $\mathfrak{g}$ \cite{DosiAC}, where
$\mathbb{C}^{m}=\mathfrak{g/}\left[  \mathfrak{g},\mathfrak{g}\right]  $.
Moreover, $\mathcal{F}_{\mathfrak{g}}\left(  \mathbb{C}^{m}\right)  $ is
commutative modulo its Jacobson radical and $\mathbb{C}^{m}%
=\operatorname{Spec}\left(  \widehat{\mathfrak{A}}^{\text{PI}}\right)  $. The
canonical homomorphism $\mathcal{U}\left(  \mathfrak{g}\right)  \rightarrow
\mathcal{F}_{\mathfrak{g}}\left(  \mathbb{C}^{m}\right)  $ turns out to be an
absolute localization, and we come up with the standard geometric
configuration $\left(  \mathbb{C}^{m},\mathcal{F}_{\mathfrak{g}}\right)  $
with the structure sheaf
\[
\mathcal{F}_{\mathfrak{g}}=\mathcal{O}\widehat{\otimes}\mathbb{C}\left[
\left[  \omega_{1},\ldots,\omega_{k}\right]  \right]
\]
of stalks of formally radical functions on $\mathbb{C}^{m}$, where
$k=\dim\left(  \left[  \mathfrak{g},\mathfrak{g}\right]  \right)  $
\cite{DosJOT10}, \cite{DosiAC}. Thus $\left(  \mathbb{C}^{m},\mathcal{F}%
_{\mathfrak{g}}\right)  $ is a standard geometry that stands for the
PI-envelope of $\mathcal{U}\left(  \mathfrak{g}\right)  $. The related
$\mathcal{F}_{\mathfrak{g}}$-functional calculus problem is solved in
\cite{DosJOT10}. The case of the Arens-Michael envelope $\widehat{\mathfrak{A}%
}=\mathcal{O}_{\mathfrak{g}}$ remains unclear though the related localization
problem $\mathcal{U}\left(  \mathfrak{g}\right)  \rightarrow\mathcal{O}%
_{\mathfrak{g}}$ solved positively (see \cite{DosIzv}, \cite{PirSF},
\cite{Aris20}).

The algebra $\mathcal{F}_{\mathfrak{g}}\left(  \mathbb{C}^{m}\right)  $ is
defined for a solvable Lie algebra $\mathfrak{g}$ case too, and $C^{\infty}%
$-version of $\mathcal{F}_{\mathfrak{g}}$ including the related structure
sheaf were constructed in \cite{ArisI}. The case of a solvable, but
non-nilpotent Lie algebra $\mathfrak{g}$ with both $C^{\infty}$ and
$\mathcal{F}_{\mathfrak{g}}$ structure sheaves was proposed in \cite{Aris3}.
It turns out that the algebra of all noncommutative global $C^{\infty}%
$-sections from \cite{ArisI} is the envelope with respect to the class of all
Banach algebras with the polynomial growth (PG algebras) \cite{Aris}.

In the case of a contractive quantum plane $\mathfrak{A=A}_{q}=\mathbb{C}%
\left\langle x,y\right\rangle /\left(  xy-q^{-1}yx\right)  $, $q\in\mathbb{C}%
$, $\left\vert q\right\vert \neq1$, its Arens-Michael envelope
$\widehat{\mathfrak{A}}$ (the algebra of all entire functions in
noncommutative variables $x$, $y$) turns out to be commutative modulo its
Jacobson radical (see \cite{Dosi24}). Moreover, $\operatorname{Spec}\left(
\widehat{\mathfrak{A}}\right)  =\mathbb{C}_{xy}$ is the union of two complex
lines $\mathbb{C}_{x}=\mathbb{C\times}\left\{  0\right\}  $ and $\mathbb{C}%
_{y}=\left\{  0\right\}  \times\mathbb{C}$ in $\mathbb{C}^{2}$, and the
canonical homomorphism $\mathfrak{A}\rightarrow\widehat{\mathfrak{A}}$ is an
absolute localization \cite{Pir}. One can equip $\mathbb{C}_{x}$ with the
$q$-topology and the other line $\mathbb{C}_{y}$ with the disk topology (see
\cite{Dosi24}) or vice-versa. Then $\mathbb{C}_{xy}$ is equipped with the
final topology so that both inclusions of these lines into $\mathbb{C}_{xy}$
are continuous. The (noncommutative) analytic space $\left(  \mathbb{C}%
_{xy},\mathcal{O}_{q}\right)  $ that stands for $\widehat{\mathfrak{A}}$ is
given by the topological space $\mathbb{C}_{xy}$ and the structure Fr\'{e}chet
algebra presheaf $\mathcal{O}_{q}$ \cite{Dosi24}. The related $\mathcal{O}%
_{q}$-functional calculus problem for a left Banach $\mathfrak{A}_{q}$-module
was solved in \cite{Dosi24}.

The PI envelope $A=\widehat{\mathfrak{A}_{q}}^{\text{PI}}$ of $\mathfrak{A}%
_{q}$ admits the standard analytic geometry $\left(  \mathbb{C}_{xy}%
,\mathcal{F}_{q}\right)  $ referred as the formal geometry of $\mathfrak{A}%
_{q}$, and it has the structure sheaf $\mathcal{F}_{q}$ not just a presheaf.
Namely, first let us notify that the PI and N envelopes of $\mathfrak{A}_{q}$
coincide, that is, $A=\widehat{\mathfrak{A}_{q}}^{\text{N}}$ (see
\cite{DosiSS}) and $\operatorname{Spec}\left(  A\right)  =\mathbb{C}_{xy}$
holds too. Further, the canonical homomorphism $\mathfrak{A}_{q}\rightarrow A$
turns out to be localization \cite{DosiLoc}. The same spectrum $\mathbb{C}%
_{xy}$ is equipped with the $q$-topology over both lines, and the Fr\'{e}chet
algebra sheaf $\mathcal{F}_{q}$ on $\mathbb{C}_{xy}$ is obtained as the
fibered product
\[
\mathcal{F}_{q}=\mathcal{O}\left[  \left[  y\right]  \right]
\underset{\mathbb{C}\left[  \left[  x,y\right]  \right]  }{\times}\left[
\left[  x\right]  \right]  \mathcal{O}%
\]
of the formal power series sheaves $\mathcal{O}\left[  \left[  y\right]
\right]  $ and $\left[  \left[  x\right]  \right]  \mathcal{O}$ over the
constant sheaf $\mathbb{C}\left[  \left[  x,y\right]  \right]  $, where
$\mathcal{O}$ is the sheaf of stalks of holomorphic functions on the complex
$q$-plane (see \cite{Dosi25}). The related functional calculus problem for a
left Fr\'{e}chet $A$-module $X$ is going to be solved in the present work as a
particular case of the general functional calculus of \v{C}ech categories.

We introduce an $A$-category $\mathcal{S}$ to be a certain category of
Fr\'{e}chet $A$-algebras, that is, there are compatible morphisms
$A\rightarrow\mathcal{A}$ into objects $\mathcal{A}$ of $\mathcal{S}$ denoted
briefly by $A\rightarrow\mathcal{S}$. A morphism of $A\rightarrow\mathcal{S}$
into another $B\rightarrow\mathcal{G}$ we mean a functor
$F:\mathcal{S\rightarrow G}$ along with a family of the compatible morphisms
$f:A\rightarrow B$ and $\mathcal{A}\rightarrow F\left(  \mathcal{A}\right)  $
for all objects $\mathcal{A}$ of $\mathcal{S}$. The category $\mathcal{T}_{A}$
of all trivial modules $\mathbb{C}\left(  \lambda\right)  $, $\lambda
\in\operatorname{Spec}\left(  A\right)  $ with their trivial morphisms is an
$A$-category. One can complete an $A$-category $\mathcal{S}$ by adding up all
trivial modules $\mathbb{C}\left(  \lambda\right)  $ obtained by $\lambda
\in\operatorname{Spec}\left(  \mathcal{A}\right)  $ for all objects
$\mathcal{A}$ from $\mathcal{S}$, which results in a new $A$-category
$\mathcal{S}^{\sim}$ called \textit{the point completion of }$\mathcal{S}$. A
spectral theory of the left Fr\'{e}chet $A$-modules with respect to
$A$-categories is developed below in Section \ref{sectionFM}. Our approach is
based on topological homology constructions developing the key ideas of J. L.
Taylor \cite{Tay2}, M. Putinar \cite{Put}, and the author's \cite{DosJOT10}.
The \textit{resolvent set} $\operatorname{res}\left(  \mathcal{S},X\right)  $
of a left Fr\'{e}chet $A$-module $X$ with respect to an $A$-category\textit{
}$\mathcal{S}$ is defined to be the set of those objects $\mathcal{A}$ of
$\mathcal{S}$ such that $\mathcal{B}\perp X$ holds (that is,
$\operatorname{Tor}_{k}^{A}\left(  \mathcal{B},X\right)  =\left\{  0\right\}
$, $k\geq0$) for every morphism $\mathcal{A\rightarrow B}$ in $\mathcal{S}$.
The complement
\[
\sigma\left(  \mathcal{S},X\right)  =\mathcal{S}\backslash\operatorname{res}%
\left(  \mathcal{S},X\right)
\]
is called \textit{the spectrum of the }$A$\textit{-module} $X$\textit{ with
respect to} $\mathcal{S}$. The spectrum is closed with respect to the natural
(Aleksandrov) topology of $\mathcal{S}$ (see Subsection \ref{subsecAT}). The
set%
\[
\sigma\left(  \mathcal{S}^{\sim}\cap\mathcal{T}_{A},X\right)  =\left(
\mathcal{S}^{\sim}\cap\mathcal{T}_{A}\right)  \backslash\operatorname{res}%
\left(  \mathcal{S}^{\sim}\cap\mathcal{T}_{A},X\right)
\]
is called \textit{the Taylor spectrum of} \textit{the }$A$\textit{-module}
$X$\textit{ with respect to} $\mathcal{S}$. We also consider the following set
$\operatorname{res}_{\operatorname{P}}\left(  \mathcal{S},X\right)
=\operatorname{res}\left(  \mathcal{S},X\right)  ^{\sim}\cap\mathcal{T}_{A}$,
where $\operatorname{res}\left(  \mathcal{S},X\right)  ^{\sim}$ is the point
completion of the open subcategory $\operatorname{res}\left(  \mathcal{S}%
,X\right)  $. Thus $\mathbb{C}\left(  \lambda\right)  $ is an object of
$\operatorname{res}_{\operatorname{P}}\left(  \mathcal{S},X\right)  $ iff
$\lambda\in\operatorname{Spec}\left(  \mathcal{A}\right)  $ for a certain
object $\mathcal{A}\in\operatorname{res}\left(  \mathcal{S},X\right)  $. The
set
\[
\sigma_{\operatorname{P}}\left(  \mathcal{S},X\right)  =\left(  \mathcal{S}%
^{\sim}\cap\mathcal{T}_{A}\right)  \backslash\operatorname{res}%
_{\operatorname{P}}\left(  \mathcal{S},X\right)
\]
is called \textit{the Putinar spectrum of the }$A$-\textit{module }$X$
\textit{with respect to} $\mathcal{S}$. Our first central asserts that if $A$
is a finite type algebra (having a finite free $A$-bimodule resolution),
$\mathcal{S}$ a nuclear $A$-category, $\mathcal{A}$ an object of $\mathcal{S}$
dominating over a left $A$-module $X$ (the presence of an $\mathcal{A}%
$-calculus on $X$), then
\[
\sigma\left(  \mathcal{S},X\right)  |_{\mathcal{A}}=\sigma\left(
U_{\mathcal{A}},X\right)  ,\quad\sigma_{\operatorname{P}}\left(
\mathcal{S},X\right)  |_{\mathcal{A}}=\sigma_{\operatorname{P}}\left(
U_{\mathcal{A}},X\right)  ,\quad\sigma\left(  \mathcal{S}^{\sim}%
\cap\mathcal{T}_{A},X\right)  |_{\mathcal{A}}=\sigma\left(  \mathcal{T}%
_{\mathcal{A}},X\right)
\]
hold (see Theorem \ref{thSMT}), where $U_{\mathcal{A}}$ is the open (least)
subcategory of $\mathcal{S}$ containing all possible morphisms from
$\mathcal{A}$ into objects of $\mathcal{S}$, which is considered to be an
$\mathcal{A}$-category.

Further in Section \ref{sectionAFA}, we consider the poset $A$-categories,
which form the complete lattices. It turns out the unital complete lattice
categories with their strong morphisms are Fr\'{e}chet algebra presheaves
indeed (see Proposition \ref{corCes1}). We introduce a complex analytic
geometry of $A$ as a unital complete-lattice $A$-category $\mathcal{S}$ such
that for every open subset $U\subseteq\mathcal{S}$ the inclusion
$\operatorname{Spec}\left(  \wedge U\right)  \subseteq U^{\sim}$ holds, where
$\wedge U$ is the least upper bound of $U$ in $\mathcal{S}$. It is equivalent
to the presence of a Fr\'{e}chet algebra presheaf $\mathcal{P}$ on a
topological space $\omega$ containing $\operatorname{Spec}\left(  A\right)  $
such that $A=\Gamma\left(  \omega,\mathcal{P}\right)  $ and $U\cap
\operatorname{Spec}\left(  A\right)  =\operatorname{Spec}\left(
\mathcal{P}\left(  U\right)  \right)  $ for every open subset $U\subseteq
\omega$, which is compatible with the arguments and examples considered above.

A noncommutative covering (or a basis) of a unital complete-lattice
$A$-category $\mathcal{S}$ we mean a countable family $\mathfrak{b}$ of its
objects such that $\wedge U=\wedge\left(  U\cap\mathfrak{b}\right)  $ for
every nontrivial open subset $U\subseteq\mathcal{S}$. If the augmented
\v{C}ech complex of $\mathfrak{b}$ is exact, then we say that $\mathcal{S}$ is
\textit{a \v{C}ech }$A$-\textit{category}. The central result on the
functional calculus asserts that is $X$ is a finite-free left Fr\'{e}chet
$A$-module and $U$ is an open neighborhood of the spectrum $\sigma\left(
\mathcal{S},X\right)  $ in a \v{C}ech $A$-category $\mathcal{S}$ with a
nuclear basis $\mathfrak{b}$, then $X$ turns out to be a left Fr\'{e}chet
$\wedge U$-module extending its $A$-module structure through the morphism
$A\rightarrow\wedge U$.

In particular, if $A$ is a finite type algebra with its complex analytic
geometry $\left(  \omega,\mathcal{P}\right)  $ such that $\mathcal{P}$ is a
nuclear Fr\'{e}chet algebra presheaf, $\mathfrak{b}=\left\{  V_{i}\right\}  $
a countable topology base of $\omega$ such that its augmented \v{C}ech complex
is exact, and $U\subseteq\omega$ is an open subset containing the spectrum
$\sigma\left(  \mathcal{P},X\right)  $, then $X$ turns out to be a left
$\mathcal{P}\left(  U\right)  $-module extending its $A$-module structure
through the restriction morphism $A\rightarrow\mathcal{P}\left(  U\right)  $.

In the case of a standard analytic geometry $\left(  \operatorname{Spec}%
\left(  A\right)  ,\mathcal{F}\right)  $ of $A$ with the structure nuclear
Fr\'{e}chet algebra sheaf $\mathcal{F}$, we come up with the following result
that generalizes all functional calculi obtained in \cite{Tay2}, \cite{Put},
\cite{Dos} and \cite{DosJOT10}. If all finite intersections of a a countable
basis $\mathfrak{b}$ for $\operatorname{Spec}\left(  A\right)  $ (including
the space itself) are $\mathcal{F}$-acyclic, and $X$ is a finite-free left
$A$-module, then $X$ turns out to be a Fr\'{e}chet left $\mathcal{F}\left(
U\right)  $-module extending its $A$-module structure whenever $U$ is an open
neighborhood of the Putinar spectrum $\sigma_{\operatorname{P}}\left(
\mathcal{F},X\right)  $. In the case of the various multinormed envelopes $A$
of the contractive quantum plane $\mathfrak{A}_{q}$ we obtain a number of
exotic analytic geometries and the related functional calculi that considered
in Section \ref{sectionCQP}.

\section{Preliminaries\label{sPre}}

All considered vector spaces are assumed to be complex, and (associative)
algebras are unital. The algebra of all continuous linear operators acting on
a polynormed (or locally convex) space $X$ is denoted by $\mathcal{L}\left(
X\right)  $, whereas the same algebra is denoted by $\mathcal{B}\left(
X\right)  $ in the case of a normed space $X$. If the topology of a complete
polynormed algebra $A$ is defined by means of a family of multiplicative
seminorms, then $A$ is called an \textit{Arens-Michael algebra} \cite[1.2.4]%
{Hel}. The main category of the underlying polynormed spaces considered in the
paper is the category $\mathfrak{Fs}$ of all Fr\'{e}chet spaces, whereas
$\mathfrak{Fa}$ denotes the category of the Fr\'{e}chet algebras. The category
of all cochain complexes over $\mathfrak{Fs}$ is denoted by $\overline
{\mathfrak{Fs}}$.

The set of all continuous characters of a polynormed algebra $A$ is denoted by
$\operatorname{Spec}\left(  A\right)  $. If $\lambda\in\operatorname{Spec}%
\left(  A\right)  $ then the algebra homomorphism $\lambda:A\rightarrow
\mathbb{C}$ defines (Banach) $A$-module structure on $\mathbb{C}$ through
$\lambda$. This module called \textit{a trivial module} is denoted by
$\mathbb{C}\left(  \lambda\right)  $.

\subsection{Transversality of modules\label{subsecTrans}}

Let $A$ be a unital Fr\'{e}chet algebra. The category of all left (resp.,
right) Fr\'{e}chet $A$-modules is denoted by $A$-$\operatorname{mod}$ (resp.,
$\operatorname{mod}$-$A$), whereas $A$-$\operatorname{mod}$-$A$ denotes the
category of all Fr\'{e}chet $A$-bimodules. A left $A$-module $X$ is said to be
a free left $A$-module if $X=A\widehat{\otimes}E$ is the projective tensor
product of $A$ and some Fr\'{e}chet space $E$ equipped with the natural left
$A$-module structure. A retract in the category $A$-$\operatorname*{mod}$ of a
free $A$-module is called a projective left $A$-module. A left $A$-module $X$
is said to be \textit{a finite-projective} (resp., \textit{finite-free})
module if it has a finite projective (resp., free) resolution $\mathcal{P}%
^{\bullet}=\left\{  P_{i}:-n\leq i\leq0\right\}  $ in the category
$A$-$\operatorname{mod}$ (see \cite[3.2]{HelHom}), that is, the complex
\[
0\rightarrow P_{-n}\rightarrow\cdots\rightarrow P_{-1}\rightarrow
P_{0}\rightarrow X\rightarrow0
\]
with a connecting (augmentation) morphism $\varepsilon:P_{0}\rightarrow X$ is
admissible (splits in the category $\mathfrak{Fs}$). It is convenient for us
to use the negative indices for a resolution of $X$ in its cochain version.
Briefly we say that $\mathcal{P}^{\bullet}\rightarrow X\rightarrow0$ is
admissible, and $\mathcal{P}^{\bullet}$ is a finite projective resolution of
$X$. If $Y$ is a right $A$-module then recall \cite[3.3.1]{HelHom} that $Y$ is
in the transversality relation with respect to the left $A$-module $X$ if all
derived functors of $Y\widehat{\otimes}_{A}\circ$ applied to the module $X$
are vanishing, that is, $\operatorname{Tor}_{k}^{A}\left(  Y,X\right)
=\left\{  0\right\}  $ for all $k\geq0$. In this case, we write $Y\perp X$.

If the algebra $A$ itself has a finite free $A$-bimodule resolution
$\mathcal{R}^{\bullet}=\left\{  R_{i}:-n\leq i\leq0\right\}  $, then every
left $A$-module is a finite-free module. In this case, $R_{i}%
=A\widehat{\otimes}E_{i}\widehat{\otimes}A$ for some Fr\'{e}chet spaces
$E_{i}$, and for brevity we write $\mathcal{R}^{\bullet}=A\widehat{\otimes
}\mathfrak{e}^{\bullet}\widehat{\otimes}A$ with $\mathfrak{e}^{\bullet
}=\left\{  E_{i}:-n\leq i\leq0\right\}  $. Since $\mathcal{R}^{\bullet
}\rightarrow A\rightarrow0$ is an admissible complex of the free
$A$-bimodules, we deduce \cite[3.1.18]{HelHom} that it splits in the category
$\operatorname{mod}$-$A$. By applying the functor $\circ\widehat{\otimes}%
_{A}X$ to the complex $\mathcal{R}^{\bullet}\rightarrow A\rightarrow0$, we
derive that $\mathcal{R}^{\bullet}\widehat{\otimes}_{A}X\rightarrow
X\rightarrow0$ is an admissible complex, that is, $\mathcal{P}^{\bullet
}=\mathcal{R}^{\bullet}\widehat{\otimes}_{A}X=A\widehat{\otimes}%
\mathfrak{e}^{\bullet}\widehat{\otimes}X$ is a finite free resolution of the
left $A$-module $X$. Moreover, $\operatorname{Tor}_{k}^{A}\left(  Y,X\right)
$ are the homology groups of $Y\widehat{\otimes}_{A}\mathcal{R}^{\bullet
}\widehat{\otimes}_{A}X$, which is the complex $Y\widehat{\otimes}%
\mathfrak{e}^{\bullet}\widehat{\otimes}X$. In applications, we come up with
the finite free $A$-bimodule resolutions $\mathcal{R}^{\bullet}%
=A\widehat{\otimes}\mathfrak{e}^{\bullet}\widehat{\otimes}A$ with the finite
dimensional (or nuclear) spaces $\mathfrak{e}^{\bullet}$. So are the Koszul
type resolutions. In this case, we say that $A$ is of \textit{finite type}.

\subsection{Taylor localizations\label{subsecTL}}

Now let $\mathfrak{A}$ be a polynormed algebra. For example, it can be a pure
algebra equipped with the finest polynormed topology. Assume that
$\iota:\mathfrak{A}\rightarrow A$ is a continuous algebra homomorphism from
$\mathfrak{A}$ into a Fr\'{e}chet algebra $A$. The homomorphism $\iota
:\mathfrak{A}\rightarrow A$ is said to be \textit{a localization} if it
induces the natural isomorphisms $H_{n}\left(  \mathfrak{A},X\right)
=H_{n}\left(  A,X\right)  $, $n\geq0$ of all homology groups for every
Fr\'{e}chet $A$-bimodule $X$. In particular, the multiplication mapping
$A\overline{\otimes}_{\mathfrak{A}}A\rightarrow A$, $a_{1}\otimes
_{\mathfrak{A}}a_{2}\mapsto a_{1}a_{2}$ (on the completed inductive tensor
product) is a topological isomorphism, and $H_{n}\left(  \mathfrak{A}%
,A\widehat{\otimes}A\right)  =\operatorname{Tor}_{n}^{\mathfrak{A}}\left(
A,A\right)  =\left\{  0\right\}  $ for all $n>0$, where the algebra $A$ is
considered to be a $\mathfrak{A}$-bimodule via the homomorphism $\iota$. If
$\mathfrak{P}\rightarrow\mathfrak{A}\rightarrow0$ is an admissible projective
bimodule resolution of $\mathfrak{A}$, then the application of the functor
$A\overline{\otimes}_{\mathfrak{A}}\circ\overline{\otimes}_{\mathfrak{A}}A$ to
$\mathfrak{P}$ augmented by the multiplication morphism results in the
following cochain complex
\begin{equation}
A\overline{\otimes}_{\mathfrak{A}}\mathfrak{P}\overline{\otimes}%
_{\mathfrak{A}}A\longrightarrow A\rightarrow0. \label{TAc}%
\end{equation}
If (\ref{TAc}) is admissible for some (actually for every) $\mathfrak{P}$,
then certainly $\iota:\mathfrak{A}\rightarrow A$ is a localization called
\textit{an absolute localization }(or $A$ is \textit{stably flat over}
$\mathfrak{A}$). In this case, $H_{n}\left(  \mathfrak{A},X\right)
=H_{n}\left(  A,X\right)  $, $n\geq0$ hold for every $\widehat{\otimes}%
$-bimodule $X$ (not necessarily a Fr\'{e}chet one). If $A$ and $A\overline
{\otimes}_{\mathfrak{A}}\mathfrak{P}\overline{\otimes}_{\mathfrak{A}}A$
consists of nuclear Fr\'{e}chet spaces, then $\iota:\mathfrak{A}\rightarrow A$
is a localization (see \cite[Proposition 1.6]{Tay2}) whenever (\ref{TAc}) is exact.

Finally, notice that if $\iota:\mathfrak{A}\rightarrow A$ is a localization
and $A\rightarrow B$ is a morphism of the Fr\'{e}chet algebras, then the
compose homomorphism $\mathfrak{A}\rightarrow B$ is a localization if and only
if so is $A\rightarrow B$ \cite[Proposition 1.8]{Tay2}. If $\mathfrak{A}$ is
of finite type with its finite free resolution $\mathfrak{P}=\mathfrak{A}%
\overline{\otimes}\mathfrak{e}^{\bullet}\overline{\otimes}\mathfrak{A}$ (in
this case, $\mathfrak{e}^{\bullet}$ consists of finite dimensional spaces),
then so is $A$ whenever $\iota:\mathfrak{A}\rightarrow A$ is a localization
and $A$ is nuclear. Thus, $\mathcal{R}^{\bullet}=A\widehat{\otimes
}\mathfrak{e}^{\bullet}\widehat{\otimes}A$ is a finite free $A$-bimodule
resolution of $A$.

\subsection{The dominating complex over a module\label{subsecDomM}}

Let $\mathcal{Y}=\left\{  Y_{s}:s\geq0\right\}  $ be a complex in
$\overline{\operatorname*{mod}\text{-}A}$, and let $X$ be a finite-projective
left $A$-module with its finite projective right $A$-module resolution
$\mathcal{P}^{\bullet}=\left\{  P_{i}:-n\leq i\leq0\right\}  $. Then we have
the well defined bicomplex $\overline{\mathcal{Y}\widehat{\otimes}%
_{A}\mathcal{P}^{\bullet}}$ with the rows $\mathcal{Y}\widehat{\otimes}%
_{A}P_{k}$ (rightward directed), $k\leq0$ and the columns $Y_{s}%
\widehat{\otimes}_{A}\mathcal{P}^{\bullet}$ (upward directed), $s\geq0$, whose
total complex is denoted by $\mathcal{Y}\widehat{\otimes}_{A}\mathcal{P}%
^{\bullet}$. Thus
\[
\circ\widehat{\otimes}_{A}\mathcal{P}^{\bullet}:\overline{\operatorname*{mod}%
\text{-}A}\longrightarrow\overline{\mathfrak{Fs}},\quad\mathcal{Y\longmapsto
Y}\widehat{\otimes}_{A}\mathcal{P}^{\bullet},\quad\overline{\varphi}%
\mapsto\overline{\varphi}\otimes_{A}1_{\mathcal{P}^{\bullet}}%
\]
is the well defined functor. If $H^{k}$ is the $k$-th (co)homology functor on
$\overline{\mathfrak{Fs}}$ then as in \cite[6.2]{HelHom}, we define
$\operatorname{Tor}_{A}^{k}\left(  \mathcal{Y},X\right)  $ to be the composite
functor $H^{k}\left(  \circ\widehat{\otimes}_{A}\mathcal{P}^{\bullet}\right)
$ applied to $\mathcal{Y}$, that is, $\operatorname{Tor}_{A}^{k}\left(
\mathcal{Y},X\right)  =H^{k}\left(  \mathcal{Y}\widehat{\otimes}%
_{A}\mathcal{P}^{\bullet}\right)  $. The homology groups $\operatorname{Tor}%
_{A}^{k}\left(  \mathcal{Y},X\right)  $ do not depend on the particular choice
of $\mathcal{P}^{\bullet}$, and $\operatorname{Tor}_{A}^{k}\left(
\mathcal{Y},X\right)  =\operatorname{Tor}_{k}^{A}\left(  Y,X\right)  $
whenever $\mathcal{Y}$ is reduced to a right $A$-module $Y$. In the latter
case, we have $\mathcal{Y}\widehat{\otimes}_{A}\mathcal{P}^{\bullet}%
=\overline{\mathcal{Y}\widehat{\otimes}_{A}\mathcal{P}^{\bullet}%
}=Y\widehat{\otimes}_{A}\mathcal{P}^{\bullet}$. If $\mathcal{Y}=A$ then
$\mathcal{Y}\widehat{\otimes}_{A}\mathcal{P}^{\bullet}=A\widehat{\otimes}%
_{A}\mathcal{P}^{\bullet}=\mathcal{P}^{\bullet}$. Therefore
$\operatorname{Tor}_{A}^{0}\left(  A,X\right)  =X$ and $\operatorname{Tor}%
_{A}^{k}\left(  A,X\right)  =\left\{  0\right\}  $, $k\neq0$.

By \textit{an augmentation of} $\mathcal{Y}$ we mean a morphism of the right
$A$-modules $\eta:A\rightarrow Y_{0}$ so that $A\rightarrow\mathcal{Y}$ is a
cochain complex. The pair $\left(  \mathcal{Y},\eta\right)  $ is called
\textit{an augmented complex of the right }$A$\textit{-modules.} The morphisms
of the augmented complexes are defined in the standard way. An augmentation
$\eta$ defines a morphism $\overline{\eta}:A\rightarrow\mathcal{Y}$ in
$\overline{\operatorname*{mod}\text{-}A}$ as%
\[%
\begin{tabular}
[c]{lll}%
$\vdots$ &  & $\vdots$\\
$\uparrow$ &  & $\uparrow$\\
$0$ & $\longrightarrow$ & $Y_{1}$\\
$\uparrow$ &  & $\uparrow$\\
$A$ & $\overset{\eta}{\longrightarrow}$ & $Y_{0}$\\
$\uparrow$ &  & $\uparrow$\\
$0$ &  & $0$%
\end{tabular}
\
\]
The morphism $\overline{\eta}:A\rightarrow\mathcal{Y}$ in turn defines a
morphism $\overline{\eta}\otimes_{A}\mathcal{P}^{\bullet}:\overline
{A\widehat{\otimes}_{A}\mathcal{P}^{\bullet}}\longrightarrow\overline
{\mathcal{Y}\widehat{\otimes}_{A}\mathcal{P}^{\bullet}}$ of the bicomplexes in
terms of the following commutative diagram%
\[%
\begin{array}
[c]{ccccccccc}%
A\widehat{\otimes}_{A}P^{0} & \longrightarrow & Y^{0}\widehat{\otimes}%
_{A}P^{0} & \rightarrow & Y^{1}\widehat{\otimes}_{A}P^{0} & \rightarrow\cdots
& \rightarrow & Y^{n}\widehat{\otimes}_{A}P^{0} & \rightarrow\cdots\\
\uparrow &  & \uparrow & \ddots & \uparrow &  &  & \uparrow & \\
\vdots &  & \vdots &  & \vdots &  &  &  & \\
\uparrow &  & \uparrow &  & \uparrow &  &  & \uparrow & \\
A\widehat{\otimes}_{A}P^{-n+1} & \longrightarrow & Y^{0}\widehat{\otimes}%
_{A}P^{-n+1} & \rightarrow & Y^{1}\widehat{\otimes}_{A}P^{-n+1} & \ddots &  &
Y^{n}\widehat{\otimes}_{A}P^{-n+1} & \rightarrow\cdots\\
\uparrow &  & \uparrow &  & \uparrow &  &  & \uparrow & \\
A\widehat{\otimes}_{A}P^{-n} & \longrightarrow & Y^{0}\widehat{\otimes}%
_{A}P^{-n} & \rightarrow & Y^{1}\widehat{\otimes}_{A}P^{-n} & \rightarrow
\cdots & \rightarrow & Y^{n}\widehat{\otimes}_{A}P^{-n} & \rightarrow\cdots
\end{array}
\]
The related morphism $A\widehat{\otimes}_{A}\mathcal{P}^{\bullet
}\longrightarrow\mathcal{Y}\widehat{\otimes}_{A}\mathcal{P}^{\bullet}$ (or
$\mathcal{P}^{\bullet}\longrightarrow\mathcal{Y}\widehat{\otimes}%
_{A}\mathcal{P}^{\bullet}$) of the total complexes are given by the diagonals
\[
A\widehat{\otimes}_{A}P^{-n}\rightarrow Y^{0}\widehat{\otimes}_{A}%
P^{-n},\text{ }A\widehat{\otimes}_{A}P^{-n+1}\rightarrow%
\begin{array}
[c]{c}%
Y^{0}\widehat{\otimes}_{A}P^{-n+1}\\
\oplus\\
Y^{1}\widehat{\otimes}_{A}P^{-n}%
\end{array}
,\text{ \ldots, }A\widehat{\otimes}_{A}P^{0}\rightarrow%
\begin{array}
[c]{c}%
Y^{0}\widehat{\otimes}_{A}P^{0}\\
\oplus\\
\vdots\\
\oplus\\
Y^{n}\widehat{\otimes}_{A}P^{-n}%
\end{array}
.
\]
In particular, there are morphisms
\[
H^{k}\left(  \overline{\eta}\otimes_{A}1_{\mathcal{P}^{\bullet}}\right)
:\operatorname{Tor}_{A}^{k}\left(  A,X\right)  \rightarrow\operatorname{Tor}%
_{A}^{k}\left(  \mathcal{Y},X\right)  ,\quad k\in\mathbb{Z}\text{.}%
\]
Taking into account that $\operatorname{Tor}_{A}^{k}\left(  A,X\right)
=\left\{  0\right\}  $ for all $k\neq0$, we conclude that $H^{k}\left(
\overline{\eta}\otimes_{A}1_{\mathcal{P}^{\bullet}}\right)  =0$, $k\neq0$, and%
\[
\eta_{\ast}=H^{0}\left(  \overline{\eta}\otimes_{A}1_{\mathcal{P}^{\bullet}%
}\right)  :X\longrightarrow\operatorname{Tor}_{A}^{0}\left(  \mathcal{Y}%
,X\right)  ,\quad\eta_{\ast}\left(  \varepsilon\left(  z\right)  \right)
=\left(  \eta\left(  1_{A}\right)  \otimes_{A}z\right)  ^{\sim}.
\]
An augmented complex of the right $A$-modules $\left(  \mathcal{Y}%
,\eta\right)  $ is said to be \textit{dominating over} $X$, in this case we
used to write $\left(  \mathcal{Y},\eta\right)  \gg X$, if $\eta_{\ast}$ is a
topological isomorphism and $\operatorname{Tor}_{A}^{k}\left(  \mathcal{Y}%
,X\right)  =\left\{  0\right\}  $ for all $k$, $k\neq0$. If $\mathcal{Y}$ is
reduced to a right $A$-module $Y$ and $\eta:A\rightarrow Y$ is a right
$A$-module morphism, then $\left(  Y,\eta\right)  \gg X$ means that
$\operatorname{Tor}_{k}^{A}\left(  Y,X\right)  =\left\{  0\right\}  $ for all
$k$, $k\neq0$, and the morphism $\eta_{\ast}:X\rightarrow\operatorname{Tor}%
_{0}^{A}\left(  Y,X\right)  $ is a topological isomorphism. Thus
$Y\widehat{\otimes}_{A}\mathcal{P}^{\bullet}\longrightarrow X\rightarrow0$
turns out to be an exact complex.

Now let $Y=\mathcal{B}$ be a unital Fr\'{e}chet algebra with a morphism
$\iota:A\rightarrow\mathcal{B}$ (a continuous algebra homomorphism), thereby
$\mathcal{B}$ is an object of the category $A$-$\operatorname*{mod}$-$A$
(through $\iota$). In this case, $\left(  \mathcal{B},\iota\right)  \gg X$
means that $\operatorname{Tor}_{k}^{A}\left(  \mathcal{B},X\right)  =\left\{
0\right\}  $ for all $k$, $k\neq0$, $\operatorname{Tor}_{0}^{A}\left(
\mathcal{B},X\right)  $ is Hausdorff and $X=\mathcal{B}\widehat{\otimes}_{A}X$
up to an isomorphism in $\mathfrak{Fs}$ (see \cite[Lemma 2.2]{DosJOT10}).

\begin{lemma}
\label{lemDom12}Let $\iota:A\rightarrow\mathcal{B}$ be a localization of
Fr\'{e}chet algebras and $X$ a left $A$-module. Then $\left(  \mathcal{B}%
,\iota\right)  \gg X$ holds if and only if the $A$-action over $X$ is lifted
to a left Fr\'{e}chet $\mathcal{B}$-module action on $X$ through $\iota$.
\end{lemma}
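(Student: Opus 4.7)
The plan splits into the two implications, with \(\Leftarrow\) being the substantive direction.

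For \(\Rightarrow\), the hypothesis \((\mathcal{B},\iota)\gg X\) supplies a topological isomorphism \(\iota_{\ast}:X\to\mathcal{B}\widehat{\otimes}_{A}X\) in \(\mathfrak{Fs}\). The target carries a canonical left Fréchet \(\mathcal{B}\)-module structure given by left multiplication on the first tensor factor; transport it back along \(\iota_{\ast}^{-1}\) to equip \(X\) with a left \(\mathcal{B}\)-action \(b\cdot x:=\iota_{\ast}^{-1}(b\otimes_{A}x)\). Compatibility with the original \(A\)-action reduces to the identity \(\iota(a)\otimes_{A}x=1\otimes_{A}ax\) inside \(\mathcal{B}\widehat{\otimes}_{A}X\), which after \(\iota_{\ast}^{-1}\) yields \(\iota(a)\cdot x=ax\). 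No homological input is needed for this direction.

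For \(\Leftarrow\), assume that \(X\) is already a left Fréchet \(\mathcal{B}\)-module whose underlying \(A\)-action is the given one through \(\iota\). I would first dispatch the degree-zero statement by a direct associativity computation. Writing \(X=\mathcal{B}\widehat{\otimes}_{\mathcal{B}}X\) canonically and invoking the localization identity \(\mathcal{B}\widehat{\otimes}_{A}\mathcal{B}\cong\mathcal{B}\) recorded in Subsection \ref{subsecTL}, associativity of the completed projective tensor product yields
\[
\mathcal{B}\widehat{\otimes}_{A}X=\mathcal{B}\widehat{\otimes}_{A}\bigl(\mathcal{B}\widehat{\otimes}_{\mathcal{B}}X\bigr)\cong\bigl(\mathcal{B}\widehat{\otimes}_{A}\mathcal{B}\bigr)\widehat{\otimes}_{\mathcal{B}}X\cong\mathcal{B}\widehat{\otimes}_{\mathcal{B}}X=X,
\]
whose inverse is visibly \(\iota_{\ast}\). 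This simultaneously establishes Hausdorffness of \(\operatorname{Tor}_{0}^{A}(\mathcal{B},X)\) and the required topological isomorphism. The remaining vanishing \(\operatorname{Tor}_{n}^{A}(\mathcal{B},X)=0\) for \(n>0\) I would handle through a bicomplex. Pick an admissible projective right \(A\)-module resolution \(\mathcal{R}^{\bullet}\to\mathcal{B}\to0\) together with an admissible free left \(\mathcal{B}\)-module resolution \(\mathcal{Q}^{\bullet}\to X\to0\) with \(Q_{t}=\mathcal{B}\widehat{\otimes}F_{t}\), and form the bicomplex \(R_{s}\widehat{\otimes}_{A}Q_{t}\). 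Row homology first, combined with the localization vanishing \(\operatorname{Tor}_{s}^{A}(\mathcal{B},\mathcal{B})=0\) for \(s>0\) and the identification \(R_{s}\widehat{\otimes}_{A}Q_{t}=(R_{s}\widehat{\otimes}_{A}\mathcal{B})\widehat{\otimes}F_{t}\), collapses the bicomplex onto \(\mathcal{Q}^{\bullet}\) and produces total homology \(X\) concentrated in degree zero. Column homology first, using the admissibility of \(\mathcal{Q}^{\bullet}\to X\) in \(\mathfrak{Fs}\) (preserved under the additive functor \(R_{s}\widehat{\otimes}_{A}\circ\)), yields \(\operatorname{Tor}_{s}^{A}(\mathcal{B},X)\) on the spine. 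Comparison of the two convergents forces \(\operatorname{Tor}_{n}^{A}(\mathcal{B},X)=0\) for \(n>0\).

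The delicate step, which I expect to be the main obstacle, is the topological bookkeeping required to promote the localization-induced exact complex \(\mathcal{R}^{\bullet}\widehat{\otimes}_{A}\mathcal{B}\to\mathcal{B}\to0\) of projective right \(\mathcal{B}\)-modules to an admissible one in \(\mathfrak{Fs}\), so that the subsequent tensoring with \(F_{t}\) (or with \(X\) on the other side) preserves exactness at the level of topological homology rather than only algebraically. In the Fréchet setting this is handled by Helemskii-style arguments on split exact sequences of projective modules \cite{HelHom}, or alternatively by invoking the parallel technical treatment already isolated in \cite[Lemma 2.2]{DosJOT10}, which packages precisely the equivalence between Hausdorff vanishing of the zeroth Tor, the isomorphism \(X\cong\mathcal{B}\widehat{\otimes}_{A}X\), and the genuine topological \(\mathcal{B}\)-module structure on \(X\).
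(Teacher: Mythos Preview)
Your forward direction matches the paper's. For the converse, however, the paper takes a much shorter route that entirely sidesteps the ``delicate step'' you flag. Once $X$ carries a left Fr\'echet $\mathcal{B}$-module structure lifting the $A$-action, the very definition of localization in Subsection~\ref{subsecTL} (namely $H_n(A,M)=H_n(\mathcal{B},M)$ for every Fr\'echet $\mathcal{B}$-bimodule $M$) applied to $M=X\widehat{\otimes}\mathcal{B}$ gives directly
\[
\operatorname{Tor}_k^{A}(\mathcal{B},X)=\operatorname{Tor}_k^{\mathcal{B}}(\mathcal{B},X)
\]
for all $k$; the right-hand side is $X$ in degree $0$ and vanishes otherwise. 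No bicomplex, no resolutions of $X$, and no admissibility question arises.

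Your bicomplex argument is not wrong, but it is circuitous, and the point you correctly isolate as delicate---that the exact complex $\mathcal{R}^{\bullet}\widehat{\otimes}_{A}\mathcal{B}\to\mathcal{B}\to 0$ need not be admissible, so tensoring with $F_t$ need not preserve exactness---is a genuine obstacle if you insist on using only the weaker consequence $\operatorname{Tor}_s^{A}(\mathcal{B},\mathcal{B})=0$. To close that gap you would end up invoking the full localization identity for the free $\mathcal{B}$-modules $Q_t=\mathcal{B}\widehat{\otimes}F_t$ anyway (to get $\operatorname{Tor}_s^{A}(\mathcal{B},Q_t)=\operatorname{Tor}_s^{\mathcal{B}}(\mathcal{B},Q_t)=0$), at which point one may as well apply it to $X$ itself and skip the bicomplex. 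Also, \cite[Lemma 2.2]{DosJOT10} is cited in the paper only to unpack the meaning of $(\mathcal{B},\iota)\gg X$; it does not supply the admissibility you need.
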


\begin{proof}
Notice that the $A$-action over $X$ is lifted to a left Fr\'{e}chet
$\mathcal{B}$-module action on $X$ through $\iota$ iff $X=\mathcal{B}%
\widehat{\otimes}_{A}X$ up to an isomorphism in $\mathfrak{Fs}$. In
particular, $\left(  \mathcal{B},\iota\right)  \gg X$ implies that $X$ is a
left Fr\'{e}chet $\mathcal{B}$-module. Conversely, suppose that $X$ is a left
$\mathcal{B}$-module that lifts its $A$-module structure. By assumption
$\iota:A\rightarrow\mathcal{B}$ is a localization, therefore
$\operatorname{Tor}_{k}^{A}\left(  \mathcal{B},X\right)  =\operatorname{Tor}%
_{k}^{\mathcal{B}}\left(  \mathcal{B},X\right)  =\left\{  0\right\}  $ for all
$k$, $k\neq0$, and $\operatorname{Tor}_{0}^{A}\left(  \mathcal{B},X\right)
=\operatorname{Tor}_{0}^{\mathcal{B}}\left(  \mathcal{B},X\right)
=\mathcal{B}\widehat{\otimes}_{\mathcal{B}}X=X$ up to a topological
isomorphism (see Subsection \ref{subsecTL}). It follows that
$\operatorname{Tor}_{k}^{A}\left(  \mathcal{B},X\right)  =\left\{  0\right\}
$ for all $k$, $k\neq0$, $\operatorname{Tor}_{0}^{A}\left(  \mathcal{B}%
,X\right)  $ is Hausdorff. Hence $\left(  \mathcal{B},\iota\right)  \gg X$ holds.
\end{proof}

In particular, if $\mathcal{B}=A$ and $\iota=\iota_{A}$ is the identity map,
then $\left(  A,\iota_{A}\right)  \gg X$ holds automatically. The argument can
be generalized (see \cite[Theorem 2.4]{DosJOT10}) to the case of a dominating
complex in the following way.

\begin{theorem}
\label{tHTf}Let $\iota:A\rightarrow\mathcal{B}$ be a morphism of the
Fr\'{e}chet algebras, $X$ a finite-projective left $A$-module, and let
$\left(  \mathcal{Y},\eta\right)  $ be an augmented complex of the right
$A$-modules such that $\mathcal{Y}$ is an object of the category
$\overline{\mathcal{B}\text{-}\operatorname*{mod}\text{-}A}$, and
$\overline{\eta}:A\rightarrow\mathcal{Y}$ is a morphism of the left
$A$-modules too. If $\left(  \mathcal{Y},\eta\right)  \gg X$ then $X$ turns
out to be a left $\mathcal{B}$-module such that its $A$-module structure via
$\iota$ is reduced to the original one.
\end{theorem}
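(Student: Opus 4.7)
The plan is to transport a left $\mathcal{B}$-module structure from $\operatorname{Tor}_{A}^{0}\left(\mathcal{Y},X\right)$ to $X$ through the topological isomorphism $\eta_{\ast}$ and then verify that, when restricted along $\iota$, this structure recovers the original $A$-action on $X$.

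First I would observe that, because every $Y_{s}$ is an object of $\mathcal{B}\text{-}\operatorname{mod}\text{-}A$, the left $\mathcal{B}$-action on $\mathcal{Y}$ commutes with the right $A$-action through which the tensor product is formed. Hence the bicomplex $\overline{\mathcal{Y}\widehat{\otimes}_{A}\mathcal{P}^{\bullet}}$ lives in the category of bicomplexes of left Fr\'{e}chet $\mathcal{B}$-modules with $\mathcal{B}$-linear differentials, and the same holds for its total complex $\mathcal{Y}\widehat{\otimes}_{A}\mathcal{P}^{\bullet}$. Consequently, each cohomology $\operatorname{Tor}_{A}^{k}\left(\mathcal{Y},X\right)=H^{k}\left(\mathcal{Y}\widehat{\otimes}_{A}\mathcal{P}^{\bullet}\right)$ inherits a natural left $\mathcal{B}$-module structure.

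Next I would invoke the hypothesis $\left(\mathcal{Y},\eta\right)\gg X$. By definition, $\operatorname{Tor}_{A}^{k}\left(\mathcal{Y},X\right)=\{0\}$ for $k\neq 0$, and $\eta_{\ast}:X\rightarrow\operatorname{Tor}_{A}^{0}\left(\mathcal{Y},X\right)$ is a topological isomorphism. In particular, $\operatorname{Tor}_{A}^{0}\left(\mathcal{Y},X\right)$ is a Fr\'{e}chet space, so the $\mathcal{B}$-action it inherits makes it a left Fr\'{e}chet $\mathcal{B}$-module. Transporting this structure along $\eta_{\ast}^{-1}$ endows $X$ with a left Fr\'{e}chet $\mathcal{B}$-module structure.

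The decisive step is to check that this structure restricts, via $\iota$, to the original left $A$-action on $X$. Recall that $\eta_{\ast}\left(\varepsilon\left(z\right)\right)=\left(\eta\left(1_{A}\right)\otimes_{A}z\right)^{\sim}$ for $z\in P_{0}$. Since $\overline{\eta}$ is simultaneously a morphism of left and right $A$-modules and the left $A$-action on $\mathcal{Y}$ factors through $\iota$, we obtain the identity
\[
\iota\left(a\right)\cdot\eta\left(1_{A}\right)=\eta\left(a\right)=\eta\left(1_{A}\right)\cdot a
\]
inside $\mathcal{Y}$. Applying this inside the tensor product,
\[
\iota\left(a\right)\cdot\left(\eta\left(1_{A}\right)\otimes_{A}z\right)^{\sim}=\left(\eta\left(1_{A}\right)\otimes_{A}az\right)^{\sim}=\eta_{\ast}\left(\varepsilon\left(az\right)\right)=\eta_{\ast}\left(a\cdot\varepsilon\left(z\right)\right),
\]
where in the last equality I use $A$-linearity of the augmentation $\varepsilon:P_{0}\rightarrow X$. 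Since $\varepsilon$ is surjective, the two continuous endomorphisms $x\mapsto\eta_{\ast}^{-1}\left(\iota\left(a\right)\cdot\eta_{\ast}\left(x\right)\right)$ and $x\mapsto a\cdot x$ of $X$ agree on every element, which is precisely the required compatibility.

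The main technical obstacle is verifying that the $\mathcal{B}$-action descends from the bicomplex to cohomology as a well-defined Fr\'{e}chet module structure; this is handled by the Hausdorffness of $\operatorname{Tor}_{A}^{0}\left(\mathcal{Y},X\right)$ guaranteed by the topological isomorphism $\eta_{\ast}$. Once the $\operatorname{Tor}$ vanishing and topological character of $\eta_{\ast}$ are exploited, the verification of the compatibility with $\iota$ reduces to the elementary bimodule identity above, so the remainder is bookkeeping at the level of the cohomology classes.
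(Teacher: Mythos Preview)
Your proposal is correct and follows essentially the same approach as the paper, which does not give a full proof but refers to \cite[Theorem 2.4]{DosJOT10} and merely sketches that $\operatorname{Tor}_{A}^{0}\left(\mathcal{Y},X\right)$ carries a natural left $\mathcal{B}$-module structure that is drawn back to $X$ via the topological isomorphism $\eta_{\ast}$. Your write-up supplies the details of this sketch, including the explicit verification of compatibility with $\iota$ via the bimodule identity $\iota(a)\cdot\eta(1_{A})=\eta(a)=\eta(1_{A})\cdot a$, which the paper leaves implicit.
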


Notice that $\operatorname{Tor}_{A}^{0}\left(  \mathcal{Y},X\right)  $
possesses a natural left $\mathcal{B}$-module structure, which can be drawn
back to $X$ by means of the topological isomorphism $\eta_{\ast}$. Theorem
\ref{tHTf} plays the central role in Taylor-Helemskii-Putinar framework of the
noncommutative functional calculus developed in \cite{DosJOT10}. The following
assertion is a bit modified version of one from \cite[Proposition 2.7
(b)]{Tay2}.

\begin{proposition}
\label{propDom12}Let $A$ be a Fr\'{e}chet algebra of finite type, $X$ a left
$A$-module, and let $\mathcal{A}$ and $\mathcal{B}$ be nuclear Fr\'{e}chet
$A$-algebras such that $\left(  \mathcal{A},\iota\right)  \gg\mathcal{B}$,
where $\iota:A\rightarrow\mathcal{A}$ is the related homomorphism. If the
transversality relation $\mathcal{A}\perp X$ holds, then $\mathcal{B}\perp X$
holds too. In particular, if $\gamma\in\operatorname{Spec}\left(
\mathcal{A}\right)  $ with the trivial module $\mathcal{B}=\mathbb{C}\left(
\gamma\right)  $, then $\mathcal{A}\perp X$ implies that $\mathbb{C}\left(
\gamma\right)  \perp X$.
\end{proposition}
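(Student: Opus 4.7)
The plan is to reduce both hypotheses to concrete acyclicity statements for finite nuclear Fr\'{e}chet complexes, then interlock them through a single bicomplex. Since $A$ is of finite type, fix a finite free $A$-bimodule resolution $\mathcal{R}^\bullet = A\widehat{\otimes}\mathfrak{e}^\bullet\widehat{\otimes}A$ with $\mathfrak{e}^\bullet$ a bounded complex of finite-dimensional spaces. By Subsection \ref{subsecTrans}, for any right $A$-module $Y$ the groups $\operatorname{Tor}^A_\bullet(Y,X)$ are the cohomology of the finite nuclear Fr\'{e}chet complex $Y\widehat{\otimes}\mathfrak{e}^\bullet\widehat{\otimes}X$. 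Thus the hypothesis $\mathcal{A}\perp X$ is equivalent to acyclicity of $\mathcal{A}\widehat{\otimes}\mathfrak{e}^\bullet\widehat{\otimes}X$, while $(\mathcal{A},\iota)\gg\mathcal{B}$ (in view of Subsection \ref{subsecDomM}) is equivalent to the admissible (strict) exactness of the augmented complex $\mathcal{A}\widehat{\otimes}\mathfrak{e}^\bullet\widehat{\otimes}\mathcal{B}\to\mathcal{B}\to 0$.

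Next, I would form the bicomplex
\[
E^{i,j}=\mathcal{A}\widehat{\otimes}\mathfrak{e}^i\widehat{\otimes}\mathcal{B}\widehat{\otimes}\mathfrak{e}^j\widehat{\otimes}X,\quad -n\leq i,j\leq 0,
\]
where the horizontal differential uses the first $\mathfrak{e}^\bullet$-slot and the vertical one uses the second. For fixed $j$, the row $E^{\bullet,j}$ is obtained from the above augmented complex by tensoring on the right with the Fr\'{e}chet space $\mathfrak{e}^j\widehat{\otimes}X$; since each term in the row is nuclear Fr\'{e}chet (by nuclearity of $\mathcal{A},\mathcal{B}$ and finite-dimensionality of $\mathfrak{e}^\bullet$), tensoring with an arbitrary Fr\'{e}chet space preserves strict exactness, and the row is strictly exact with augmentation $\mathcal{B}\widehat{\otimes}\mathfrak{e}^j\widehat{\otimes}X$. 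Symmetrically, for fixed $i$ the column $E^{i,\bullet}$ arises by tensoring the acyclic complex $\mathcal{A}\widehat{\otimes}\mathfrak{e}^\bullet\widehat{\otimes}X$ on the left with the nuclear Fr\'{e}chet space $\mathfrak{e}^i\widehat{\otimes}\mathcal{B}$, and is therefore acyclic.

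A standard double complex argument then closes the proof. Column acyclicity forces the total complex $\operatorname{Tot}(E^{\bullet,\bullet})$ to be acyclic, while row-level strict exactness together with the row augmentations identifies the cohomology of $\operatorname{Tot}(E^{\bullet,\bullet})$ with that of $\mathcal{B}\widehat{\otimes}\mathfrak{e}^\bullet\widehat{\otimes}X$. Combining, $\operatorname{Tor}^A_k(\mathcal{B},X)=0$ for every $k$, that is, $\mathcal{B}\perp X$. The main obstacle I anticipate is not algebraic but topological: to carry the double complex argument inside $\mathfrak{Fs}$ one must have strict exactness (closed images, Hausdorff quotients) in both directions, and this is precisely where nuclearity of $\mathcal{A},\mathcal{B}$ together with finite-dimensionality of $\mathfrak{e}^\bullet$ must be exploited. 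The ``in particular'' case is then an immediate specialization: $\mathbb{C}(\gamma)$ is a nuclear Fr\'{e}chet $A$-algebra via $\gamma\circ\iota$ and a left $\mathcal{A}$-module lifting this $A$-structure through $\iota$, so (under the standing assumption that $\iota:A\to\mathcal{A}$ is a localization in the paper's framework) Lemma \ref{lemDom12} delivers $(\mathcal{A},\iota)\gg\mathbb{C}(\gamma)$, and the main claim yields $\mathbb{C}(\gamma)\perp X$.
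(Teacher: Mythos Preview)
Your argument is essentially the paper's own proof: the paper shows that $\mathcal{A}\perp X$ together with nuclearity forces $(\mathcal{A}\widehat{\otimes}E_k\widehat{\otimes}\mathcal{B})\perp X$ for every $k$ (your column acyclicity), then feeds this together with the exact augmented complex $\mathcal{A}\widehat{\otimes}\mathfrak{e}^\bullet\widehat{\otimes}\mathcal{B}\to\mathcal{B}\to 0$ (your row exactness) into \cite[Proposition 2.4]{Tay2} (equivalently \cite[Corollary 3.1.16]{EP}), which is precisely the finite bicomplex/spectral sequence step you spell out explicitly. Two small corrections: the $\mathfrak{e}^\bullet$ are only assumed nuclear, not finite-dimensional, and $(\mathcal{A},\iota)\gg\mathcal{B}$ gives exactness (not admissibility) of the augmented complex---nuclearity is what makes tensoring preserve it; also, the ``in particular'' is a direct specialization of the standing hypothesis $(\mathcal{A},\iota)\gg\mathcal{B}$ to $\mathcal{B}=\mathbb{C}(\gamma)$, so no appeal to Lemma~\ref{lemDom12} or to a localization assumption is needed.
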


\begin{proof}
As above in Subsection \ref{subsecTrans}, we assume that $\mathcal{R}%
^{\bullet}=A\widehat{\otimes}\mathfrak{e}^{\bullet}\widehat{\otimes}A$ is a
finite free $A$-bimodule resolution of $A$ with nuclear Fr\'{e}chet spaces
$\mathfrak{e}^{\bullet}=\left\{  E_{i}:-n\leq i\leq0\right\}  $. Since
$\mathcal{B}$ is a left $A$-module, it follows that $\mathcal{R}^{\bullet
}\widehat{\otimes}_{A}\mathcal{B}$ is a finite free resolution of
$\mathcal{B}$ in $A$-mod. The condition $\left(  \mathcal{A},\iota\right)
\gg\mathcal{B}$ means that the cochain complex
\begin{equation}
\mathcal{A}\widehat{\otimes}_{A}\mathcal{R}^{\bullet}\widehat{\otimes}%
_{A}\mathcal{B\rightarrow B}\rightarrow0 \label{ARAB}%
\end{equation}
is exact. Note that $\mathcal{A}\widehat{\otimes}_{A}\mathcal{R}^{\bullet
}\widehat{\otimes}_{A}\mathcal{B}=\mathcal{A}\widehat{\otimes}\mathfrak{e}%
^{\bullet}\widehat{\otimes}\mathcal{B}$ is a complex from $\overline
{\text{mod-}A}$, and it consists of nuclear Fr\'{e}chet spaces.

Assume that the transversality relation $\mathcal{A}\perp X$ holds. Let us
prove that $\left(  \mathcal{A}\widehat{\otimes}E_{k}\widehat{\otimes
}\mathcal{B}\right)  \perp X$ holds for all $k$, that is, $\left(
\mathcal{A}\widehat{\otimes}\mathfrak{e}^{\bullet}\widehat{\otimes}%
\mathcal{B}\right)  \perp X$. As in the case of $\mathcal{B}$, the complex
$\mathcal{R}^{\bullet}\widehat{\otimes}_{A}X=A\widehat{\otimes}\mathfrak{e}%
^{\bullet}\widehat{\otimes}X\mathcal{\ }$denoted by $\mathcal{R}_{X}^{\bullet
}$ provides a free resolution of $X$ in $A$-mod. Since $\mathcal{A}\perp X$,
it follows that $\mathcal{A}\widehat{\otimes}_{A}\mathcal{R}_{X}^{\bullet}$
remains exact and $\mathcal{A}\widehat{\otimes}_{A}\mathcal{R}_{X}^{\bullet
}=\mathcal{A}\widehat{\otimes}\mathfrak{e}^{\bullet}\widehat{\otimes}X$. But
$\mathcal{B}$ and $E_{k}$ are nuclear spaces, therefore so is $\mathcal{B}%
\widehat{\otimes}E_{k}$, and $\mathcal{B}\widehat{\otimes}E_{k}%
\widehat{\otimes}\left(  \mathcal{A}\widehat{\otimes}_{A}\mathcal{R}%
_{X}^{\bullet}\right)  $ remains exact too. Taking into account that
\begin{align*}
\mathcal{B}\widehat{\otimes}E_{k}\widehat{\otimes}\left(  \mathcal{A}%
\widehat{\otimes}_{A}\mathcal{R}_{X}^{\bullet}\right)   &  =\mathcal{B}%
\widehat{\otimes}E_{k}\widehat{\otimes}\mathcal{A}\widehat{\otimes
}\mathfrak{e}^{\bullet}\widehat{\otimes}X=\mathcal{A}\widehat{\otimes}%
E_{k}\widehat{\otimes}\mathcal{B}\widehat{\otimes}\mathfrak{e}^{\bullet
}\widehat{\otimes}X=\mathcal{A}\widehat{\otimes}E_{k}\widehat{\otimes
}\mathcal{B}\widehat{\otimes}_{A}A\widehat{\otimes}\mathfrak{e}^{\bullet
}\widehat{\otimes}X\\
&  =\mathcal{A}\widehat{\otimes}E_{k}\widehat{\otimes}\mathcal{B}%
\widehat{\otimes}_{A}\mathcal{R}_{X}^{\bullet},
\end{align*}
we conclude that $\left(  \mathcal{A}\widehat{\otimes}E_{k}\widehat{\otimes
}\mathcal{B}\right)  \perp X$ holds for each $k$. Thus $X$ is in the
transversality relation with respect to all members of the finite length
complex $\mathcal{A}\widehat{\otimes}_{A}\mathcal{R}^{\bullet}\widehat{\otimes
}_{A}\mathcal{B}$ from (\ref{ARAB}). Using \cite[Proposition 2.4]{Tay2} (see
also \cite[Corollary 3.1.16]{EP}), we deduce that $\mathcal{B}\perp X$ holds too.
\end{proof}

\begin{corollary}
\label{corDom12}Let $A$ be a Fr\'{e}chet algebra of finite type, $\mathcal{A}$
and $\mathcal{B}$ nuclear Fr\'{e}chet $A$-algebras such that $\iota
:A\rightarrow\mathcal{A}$ is a localization and there is an $A$-morphism
$\mathcal{A\rightarrow B}$. If $\mathcal{A}\perp X$ then $\mathcal{B}\perp X$
holds automatically for every left $A$-module $X$.
\end{corollary}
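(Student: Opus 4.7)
The plan is to reduce the corollary to a direct application of Proposition \ref{propDom12}; the only content is to verify that the dominating relation $\left( \mathcal{A}, \iota \right) \gg \mathcal{B}$ is satisfied, after which the transversality conclusion is immediate.

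First I would use the given $A$-algebra morphism $\varphi : \mathcal{A} \rightarrow \mathcal{B}$ to equip $\mathcal{B}$ with a left $\mathcal{A}$-module structure via $a \cdot b = \varphi(a) b$. Because $\varphi$ is an $A$-morphism, the compose $A \xrightarrow{\iota} \mathcal{A} \xrightarrow{\varphi} \mathcal{B}$ coincides with the structural homomorphism of $\mathcal{B}$ as an $A$-algebra, so this left $\mathcal{A}$-action lifts the given left $A$-action on $\mathcal{B}$ through $\iota$.

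Next I would invoke Lemma \ref{lemDom12} with $\mathcal{A}$ playing the role of the target algebra and $\mathcal{B}$ playing the role of the module. Since $\iota : A \rightarrow \mathcal{A}$ is by hypothesis a localization of Fréchet algebras and the $A$-module structure on $\mathcal{B}$ lifts to a Fréchet $\mathcal{A}$-module action, the lemma yields precisely $\left( \mathcal{A}, \iota \right) \gg \mathcal{B}$.

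With that in hand, all hypotheses of Proposition \ref{propDom12} are in force: $A$ is of finite type, $\mathcal{A}$ and $\mathcal{B}$ are nuclear Fréchet $A$-algebras, $\left( \mathcal{A}, \iota \right) \gg \mathcal{B}$, and the transversality $\mathcal{A} \perp X$ holds by assumption. Applying the proposition directly yields $\mathcal{B} \perp X$. There is no real obstacle here; the entire argument is a bookkeeping exercise matching the hypotheses of Lemma \ref{lemDom12} and Proposition \ref{propDom12}. The only subtlety worth flagging is that the finite-type hypothesis on $A$ and the nuclearity of $\mathcal{A}$, $\mathcal{B}$ are consumed by Proposition \ref{propDom12} in order to transport the admissibility of $\mathcal{A} \widehat{\otimes}_A \mathcal{R}^\bullet \widehat{\otimes}_A \mathcal{B} \rightarrow \mathcal{B} \rightarrow 0$ across the tensor product with the resolution $\mathcal{R}_X^\bullet$, but these are already granted in the hypotheses of the corollary.
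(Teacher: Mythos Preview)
Your proposal is correct and follows essentially the same approach as the paper: observe that the $A$-morphism $\mathcal{A}\rightarrow\mathcal{B}$ lifts the left $A$-action on $\mathcal{B}$ to a left $\mathcal{A}$-module action, invoke Lemma \ref{lemDom12} to obtain $\left(\mathcal{A},\iota\right)\gg\mathcal{B}$, and then apply Proposition \ref{propDom12}. The paper's proof is merely a terser rendition of exactly these three steps.
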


\begin{proof}
Note that the left $A$-action on $\mathcal{B}$ is lifted to a left
$\mathcal{A}$-module action through the homomorphism $\mathcal{A\rightarrow
B}$. By Lemma \ref{lemDom12}, the dominance property $\left(  \mathcal{A}%
,\iota\right)  \gg\mathcal{B}$ holds. It remains to use Proposition
\ref{propDom12}.
\end{proof}

\section{The spectra of a left Fr\'{e}chet module\label{sectionFM}}

In this section we introduce spectra of a left Fr\'{e}chet $A$-module with
respect to an $A$-category, and prove the related spectral mapping properties.

\subsection{$A$-category of Fr\'{e}chet algebras}

Let $A$ be a unital Fr\'{e}chet algebra, and let $\mathcal{S}$ be a category
of some Fr\'{e}chet $A$-algebras called\textit{ an }$A$\textit{-category}.
Thus, there are compatible morphisms $A\rightarrow\mathcal{A}$ of the
Fr\'{e}chet algebras for all objects $\mathcal{A}$ from $\mathcal{S}$, and
briefly we write $A\rightarrow\mathcal{S}$. We also assume that the trivial
algebra $\left\{  0\right\}  $ and the trivial morphisms belong to
$\mathcal{S}$. So is the category $\mathcal{T}$, whose objects are only
trivial modules $\mathbb{C}\left(  \lambda\right)  $, $\lambda\in
\operatorname{Spec}\left(  A\right)  $ with their trivial morphisms, whenever
$\operatorname{Spec}\left(  A\right)  $ is not empty. An $A$-category
$\mathcal{S}$ is a subcategory of $\mathfrak{Fa}$ (or $A$-$\operatorname{mod}%
$-$A$). For every object $\mathcal{A}$ of $\mathcal{S}$ and $\mu
\in\operatorname{Spec}\left(  \mathcal{A}\right)  $, we deduce that
$\mathbb{C}\left(  \mu\right)  $ is a trivial $A$-module too, which is
included into $\mathcal{T}$. We say that $\mathcal{S}$ is \textit{a point
complete }$A$\textit{-category }if it contains all its trivial modules. In
this case, $\mathcal{S\cap T}$ is a new (nonempty) $A$-subcategory of
$\mathcal{S}$.

One can complete $\mathcal{S}$ by adding up all the trivial modules of its
objects with their canonical morphisms $\mathcal{A}\rightarrow\mathbb{C}%
\left(  \lambda\right)  $. The completion of $\mathcal{S}$ denoted by
$\mathcal{S}^{\sim}$ is called \textit{the point-completion of }$\mathcal{S}$.
An object $\mathcal{B}$ of $\mathcal{S}$ is said to be \textit{a local object
}if $A\rightarrow\mathcal{B}$ is a localization. If the base algebra $A$ is
included into $\mathcal{S}$ we say that $\mathcal{S}$ is \textit{a unital }%
$A$-\textit{category. }The category $\mathcal{T}$ is not unital in the case of
a nontrivial algebra $A$. If $\mathcal{S}$ consists of nuclear Fr\'{e}chet
algebras, we say that $\mathcal{S}$ is a \textit{nuclear }$A$%
\textit{-category. }

By a morphism of an $A$-category $\mathcal{S}$ into another $B$-category
$\mathcal{G}$ we mean a covariant functor $F:\mathcal{S\rightarrow G}$ along
with a family of the compatible morphisms $f:A\rightarrow B$ and
$\mathcal{A}\rightarrow F\left(  \mathcal{A}\right)  $ for all objects
$\mathcal{A}$ of $\mathcal{S}$. Thus, we come up with the following functor
transformation
\[%
\begin{array}
[c]{ccc}%
\mathcal{S} & \overset{F}{\longrightarrow} & \mathcal{G}\\
\uparrow &  & \uparrow\\
A & \overset{f}{\longrightarrow} & B
\end{array}
\]
along with the acting morphisms $\mathcal{A}\rightarrow F\left(
\mathcal{A}\right)  $ of the Fr\'{e}chet algebras.

\subsection{The Alexandrov topology of an $A$-category\label{subsecAT}}

Let $\mathcal{S}$ be an $A$-category. A subcategory $U\subseteq\mathcal{S}$ is
said to be \textit{open} if for every object $\mathcal{A}$ in $U$, it contains
every object $\mathcal{B}$ with a connecting morphism $\mathcal{A\rightarrow
B}$ in $\mathcal{S}$. In particular, $U$ contains all trivial modules
$\mathbb{C}\left(  \mu\right)  $, $\mu\in\operatorname{Spec}\left(
\mathcal{A}\right)  $ whenever $\mathcal{A}$ is an object of $U$ and
$\mathcal{S}$ is point complete. One can easily verify that the family of all
open subcategories defines a topology in the set of all objects of
$\mathcal{S}$. The intersection of all open neighborhoods of a fixed algebra
$\mathcal{A}$ is the set $U_{\mathcal{A}}$ of all objects $\mathcal{B}$ with
the morphisms $\mathcal{A\rightarrow B}$ in $\mathcal{S}$. Indeed, if
$\mathcal{B}$ is an object of $U_{\mathcal{A}}$ with a morphism
$\mathcal{B\rightarrow C}$ in $\mathcal{S}$ then $\mathcal{A\rightarrow
B\rightarrow C}$ provides a morphism $\mathcal{A\rightarrow C}$ from the
category $\mathcal{S}$, therefore $\mathcal{C}$ is included into
$U_{\mathcal{A}}$. The algebra $\mathcal{A}$ itself is included into
$U_{\mathcal{A}}$ out of the identity morphism $\mathcal{A\rightarrow A}$.
Hence $U_{\mathcal{A}}$ is open and contains $\mathcal{A}$. If $U$ is an open
subset containing $\mathcal{A}$ then it should contain all morphisms
$\mathcal{A\rightarrow B}$ by its very definition, that is, $U_{\mathcal{A}%
}\subseteq U$. Thus $\mathcal{S}$ is an Alexandrov topological space, which
means every $\mathcal{A}$ has the least neighborhood $U_{\mathcal{A}}$ being
an open quasicompact set \cite{Spe}.

Notice that $U_{\mathbb{C}\left(  \lambda\right)  }=\left\{  \mathbb{C}\left(
\lambda\right)  \right\}  $ for a possible trivial module $\mathbb{C}\left(
\lambda\right)  $ from $\mathcal{S}$. The topological space $\mathcal{S\cap
T}$ (in particular, $\mathcal{T}$ itself) is discreet.

Further, if $\mathcal{B}$ is an object of $\mathcal{S}$, then the closure of
the singleton $\left\{  \mathcal{B}\right\}  $ consists of those objects
$\mathcal{A}$ which admit morphisms $\mathcal{A\rightarrow B}$. The trivial
module $\mathbb{C}\left(  \lambda\right)  $ is closed in $\mathcal{S\cap T}$
whereas its closure in $\mathcal{S}$ consists of those algebras $\mathcal{A}$
such that $\lambda\in\operatorname{Spec}\left(  \mathcal{A}\right)  $. If
$\mathcal{S}$ is unital, then $A$ is included into $\mathcal{S}$ and $U_{A}$
consists of all object from $\mathcal{S}$, and $\mathcal{S}$ is a quasicompact
space with the closed point $\left\{  A\right\}  $.

If $F:\mathcal{S\rightarrow G}$ is a morphism of an $A$-category into a
$B$-category, then $F\left(  U_{\mathcal{A}}\right)  \subseteq U_{F\left(
\mathcal{A}\right)  }$ for every object $\mathcal{A}$ of $\mathcal{S}$, which
means that $F$ is continuous with respect to the related Alexandrov topologies.

\subsection{A point basis for an $A$-category\label{subsecPB}}

The original topology of an $A$-category $\mathcal{S}$ can be extended to the
point completion $\mathcal{S}^{\sim}$ automatically. In this case,
$U_{\mathcal{A}}$ in $\mathcal{S}^{\sim}$ contains all possible trivial
modules $\mathbb{C}\left(  \lambda\right)  $, $\lambda\in\operatorname{Spec}%
\left(  \mathcal{A}\right)  $. Every $\mathbb{C}\left(  \lambda\right)  $ from
$\mathcal{S}^{\sim}\cap\mathcal{T}$ has a neighborhood filter base (in
$\mathcal{S}^{\sim}$) of all $U_{\mathcal{A}}$ with $\lambda\in
\operatorname{Spec}\left(  \mathcal{A}\right)  $, $\mathcal{A\in S}$. A
countable family $\mathfrak{t=}\left\{  \mathcal{B}\right\}  $ of the objects
of an $A$-category $\mathcal{S}$ is said to be \textit{a point basis for
}$\mathcal{S}$ if $\left\{  U_{\mathcal{B}}:\mathcal{B}\in\mathfrak{t}%
\right\}  $ is a topology base in the point completion $\mathcal{S}^{\sim}$ of
$\mathcal{S}$. Thus for every trivial module $\mathbb{C}\left(  \lambda
\right)  \in\mathcal{S}^{\sim}$ the family $\left\{  U_{\mathcal{B}%
}:\mathcal{B}\in\mathfrak{t},\mathbb{C}\left(  \lambda\right)  \in
U_{\mathcal{B}}\right\}  $ is a neighborhood filter base of $\mathbb{C}\left(
\lambda\right)  $ in $\mathcal{S}^{\sim}$. If a point basis $\mathfrak{t}$
consists of local algebras, we say that $\mathfrak{t}$ is a
\textit{localizing} \textit{basis for }$\mathcal{S}^{\sim}$. The fact that a
point basis is countable, it is not necessary below in this section.

\subsection{The $\mathcal{S}$-spectrum of a left $A$-module\label{subsecSSA}}

Let $\mathcal{S}$ be an $A$-category and let $X$ be a (nonzero) left
Fr\'{e}chet $A$-module. We use the transversality relation (see Subsection
\ref{subsecTrans}) of the algebras from $\mathcal{S}$ as right $A$-modules
versus the left $A$-module $X$ to define the spectrum of $X$.

\begin{definition}
\label{def11}The \textit{resolvent set} $\operatorname{res}\left(
\mathcal{S},X\right)  $ \textit{of the }$A$\textit{-module }$X$\textit{ with
respect to an }$A$-\textit{category }$\mathcal{S}$ is defined to be the set of
those objects $\mathcal{A}$ of $\mathcal{S}$ such that $\mathcal{B}\perp X$
for every morphism $\mathcal{A\rightarrow B}$ in $\mathcal{S}$. The complement
$\sigma\left(  \mathcal{S},X\right)  =\mathcal{S}\backslash\operatorname{res}%
\left(  \mathcal{S},X\right)  $ is called \textit{the spectrum of the }%
$A$\textit{-module} $X$\textit{ with respect to} $\mathcal{S}$. The set
$\sigma\left(  \mathcal{S}^{\sim}\cap\mathcal{T},X\right)  =\left(
\mathcal{S}^{\sim}\cap\mathcal{T}\right)  \backslash\operatorname{res}\left(
\mathcal{S}^{\sim}\cap\mathcal{T},X\right)  $ is called \textit{the Taylor
spectrum of} \textit{the }$A$\textit{-module} $X$\textit{ with respect to}
$\mathcal{S}$.
\end{definition}

In the case of $\mathcal{S=T}$, the set $\sigma\left(  \mathcal{T},X\right)
=\mathcal{T}\backslash\operatorname{res}\left(  \mathcal{T},X\right)  $ is
called \textit{the Taylor spectrum of the }$A$\textit{-module} $X$, and we use
the notation $\sigma\left(  A,X\right)  $ instead of $\sigma\left(
\mathcal{T},X\right)  $.

Notice that if $\mathcal{A\in}\operatorname{res}\left(  \mathcal{S},X\right)
$ then $\mathcal{A}\perp X$ holds too out of the identity morphism
$\mathcal{A\rightarrow A}$. By its very definition, $\operatorname{res}\left(
\mathcal{S},X\right)  $ is open, therefore the spectrum $\sigma\left(
\mathcal{S},X\right)  $ is a closed set. The transversality $A\perp X$ for the
base algebra $A$ does not hold, for $\operatorname{Tor}_{0}^{A}\left(
A,X\right)  =X\neq\left\{  0\right\}  $. It follows that $A\in\sigma\left(
\mathcal{S},X\right)  $ whenever $\mathcal{S}$ is unital. In the case of
$\mathcal{S=T}$, we obtain that $\operatorname{res}\left(  \mathcal{T}%
,X\right)  $ consists of those $\lambda\in\operatorname{Spec}\left(  A\right)
$ such that $\mathbb{C}\left(  \lambda\right)  \perp X$. Moreover,
$\operatorname{res}\left(  \mathcal{S}^{\sim},X\right)  \cap\mathcal{T}%
=\operatorname{res}\left(  \mathcal{S}^{\sim}\cap\mathcal{T},X\right)  $ or
$\sigma\left(  \mathcal{S}^{\sim}\cap\mathcal{T},X\right)  =\sigma\left(
\mathcal{S}^{\sim},X\right)  \cap\mathcal{T}$, and $\sigma\left(
\mathcal{S}^{\sim}\cap\mathcal{T},X\right)  =\sigma\left(  \mathcal{T}%
,X\right)  $ whenever $\mathcal{S}$ is unital.

Along with the resolvent set $\operatorname{res}\left(  \mathcal{T},X\right)
$ in $\mathcal{T}$, we consider the following set $\operatorname{res}%
_{\operatorname{P}}\left(  \mathcal{S},X\right)  =\operatorname{res}\left(
\mathcal{S},X\right)  ^{\sim}\cap\mathcal{T}$, where $\operatorname{res}%
\left(  \mathcal{S},X\right)  ^{\sim}$ is the point completion of the open
subcategory $\operatorname{res}\left(  \mathcal{S},X\right)  $. Thus
$\mathbb{C}\left(  \lambda\right)  $ is an object of $\operatorname{res}%
_{\operatorname{P}}\left(  \mathcal{S},X\right)  $ iff $\lambda\in
\operatorname{Spec}\left(  \mathcal{A}\right)  $ for a certain object
$\mathcal{A}\in\operatorname{res}\left(  \mathcal{S},X\right)  $.

\begin{definition}
\label{def12}The set $\sigma_{\operatorname{P}}\left(  \mathcal{S},X\right)
=\left(  \mathcal{S}^{\sim}\cap\mathcal{T}\right)  \backslash
\operatorname{res}_{\operatorname{P}}\left(  \mathcal{S},X\right)  $ is called
\textit{the Putinar spectrum of the }$A$-\textit{module }$X$ \textit{with
respect to} $\mathcal{S}$.
\end{definition}

One can easily see that $\sigma_{\operatorname{P}}\left(  \mathcal{T}%
,X\right)  =\sigma\left(  \mathcal{T},X\right)  =\sigma\left(  A,X\right)  $.
In general, these spectra from Definitions \ref{def11} and \ref{def12} are
distinct (closed) subsets. Nevertheless, there are some key inclusions between
them in some particular cases clarified below. Moreover, $\sigma
_{\operatorname{P}}\left(  \mathcal{S},X\right)  $ and $\sigma\left(
\mathcal{S}^{\sim}\cap\mathcal{T},X\right)  $ do relate to the points or the
trivial modules from the point completion $\mathcal{S}^{\sim}$, whereas
$\sigma\left(  \mathcal{S},X\right)  $ consists of the algebras from the
$A$-category $\mathcal{S}$.

If the local transversality properties $\mathbb{C}\left(  \gamma\right)  \perp
X$, $\gamma\in\operatorname{Spec}\left(  \mathcal{B}\right)  $ imply the
global one $\mathcal{B}\perp X$ for every local object $\mathcal{B}$ of
$\mathcal{S}$, then we say that $X$ is \textit{an }$\mathcal{S}$\textit{-local
left }$A$\textit{-module}. In this case, we assume automatically that
$\operatorname{Spec}\left(  \mathcal{B}\right)  $ is not empty for every local
object $\mathcal{B}$ of $\mathcal{S}$. Practically, the $\mathcal{S}$-local
left Fr\'{e}chet modules are Banach $A$-modules indeed.

\begin{proposition}
\label{propLocG}Let $A$ be a finite type algebra, $\mathcal{S}$ a nuclear
$A$-category with its point completion $\mathcal{S}^{\sim}$, which has a
localizing basis $\mathfrak{t}$, and let $X$ be a \textit{left }%
$A$\textit{-module. }Then
\[
\sigma\left(  \mathcal{S}^{\sim}\cap\mathcal{T},X\right)  \subseteq
\sigma_{\operatorname{P}}\left(  \mathcal{S},X\right)  .
\]
Moreover, if $X$ is an $\mathcal{S}$\textit{-local left }$A$\textit{-module,
then }%
\[
\sigma_{\operatorname{P}}\left(  \mathcal{S},X\right)  \subseteq\sigma\left(
\mathcal{S}^{\sim}\cap\mathcal{T},X\right)  ^{-}%
\]
with respect to the topology of the point completion $\mathcal{S}^{\sim}$.
\end{proposition}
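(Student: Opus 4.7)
The plan is to prove the two inclusions separately. The first inclusion reduces directly to the ``in particular'' clause of Proposition \ref{propDom12}, while the second is a contrapositive argument that crucially combines the localizing basis with the $\mathcal{S}$-local hypothesis and Corollary \ref{corDom12}.

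For the first inclusion $\sigma\left(\mathcal{S}^{\sim}\cap\mathcal{T},X\right)\subseteq\sigma_{\operatorname{P}}\left(\mathcal{S},X\right)$, I verify the dual containment of resolvent sets. Given $\mathbb{C}\left(\lambda\right)\in\operatorname{res}_{\operatorname{P}}\left(\mathcal{S},X\right)$, by definition there is an object $\mathcal{A}\in\operatorname{res}\left(\mathcal{S},X\right)$ with $\lambda\in\operatorname{Spec}\left(\mathcal{A}\right)$; the identity morphism of $\mathcal{A}$ already forces $\mathcal{A}\perp X$. Applying the ``in particular'' assertion of Proposition \ref{propDom12} to the nuclear Fr\'{e}chet $A$-algebra $\mathcal{A}$ and the trivial module $\mathbb{C}\left(\lambda\right)$ over it yields $\mathbb{C}\left(\lambda\right)\perp X$. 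Since $\mathcal{S}^{\sim}\cap\mathcal{T}$ carries only identity morphisms, this immediately places $\mathbb{C}\left(\lambda\right)$ in $\operatorname{res}\left(\mathcal{S}^{\sim}\cap\mathcal{T},X\right)$.

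For the second inclusion, assuming $X$ is $\mathcal{S}$-local I argue by contrapositive: starting from $\mathbb{C}\left(\lambda\right)\notin\sigma\left(\mathcal{S}^{\sim}\cap\mathcal{T},X\right)^{-}$ I will produce $\mathcal{B}\in\operatorname{res}\left(\mathcal{S},X\right)$ with $\lambda\in\operatorname{Spec}\left(\mathcal{B}\right)$, forcing $\mathbb{C}\left(\lambda\right)\in\operatorname{res}_{\operatorname{P}}\left(\mathcal{S},X\right)$. By the neighborhood filter base description in Subsection \ref{subsecPB}, an open neighborhood of $\mathbb{C}\left(\lambda\right)$ disjoint from $\sigma\left(\mathcal{S}^{\sim}\cap\mathcal{T},X\right)$ contains some $U_{\mathcal{A}}$ with $\lambda\in\operatorname{Spec}\left(\mathcal{A}\right)$, and then $U_{\mathcal{A}}\cap\mathcal{T}\subseteq\operatorname{res}\left(\mathcal{S}^{\sim}\cap\mathcal{T},X\right)$ forces $\mathbb{C}\left(\mu\right)\perp X$ for every $\mu\in\operatorname{Spec}\left(\mathcal{A}\right)$. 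Refining through the localizing basis $\mathfrak{t}$ of $\mathcal{S}^{\sim}$, I extract $\mathcal{B}\in\mathfrak{t}$ with $\mathbb{C}\left(\lambda\right)\in U_{\mathcal{B}}\subseteq U_{\mathcal{A}}$; the inclusion supplies a morphism $\mathcal{A}\rightarrow\mathcal{B}$, and via pullback every character of $\mathcal{B}$ restricts to a character of $\mathcal{A}$, so $\mathbb{C}\left(\nu\right)\perp X$ for every $\nu\in\operatorname{Spec}\left(\mathcal{B}\right)$. The $\mathcal{S}$-local hypothesis, combined with the locality of $\mathcal{B}\in\mathfrak{t}$, upgrades these pointwise transversalities to $\mathcal{B}\perp X$. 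Corollary \ref{corDom12} then propagates this to $\mathcal{C}\perp X$ for every morphism $\mathcal{B}\rightarrow\mathcal{C}$ in $\mathcal{S}$, yielding $\mathcal{B}\in\operatorname{res}\left(\mathcal{S},X\right)$ as needed.

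The main obstacle is the topological bookkeeping in $\mathcal{S}^{\sim}$ near a trivial module: the relevant open neighborhoods of $\mathbb{C}\left(\lambda\right)$ are the $U_{\mathcal{A}}$'s with $\lambda\in\operatorname{Spec}\left(\mathcal{A}\right)$ rather than $\left\{\mathbb{C}\left(\lambda\right)\right\}$ itself, which is precisely why the closure in $\mathcal{S}^{\sim}$ can capture trivial modules lying outside $\sigma$. A secondary subtlety is the local-to-global implication ``$\mathbb{C}\left(\nu\right)\perp X$ for all $\nu\in\operatorname{Spec}\left(\mathcal{B}\right)$ $\Longrightarrow$ $\mathcal{B}\perp X$'': this is exactly the $\mathcal{S}$-local hypothesis and cannot be omitted, reflecting the usual restriction of Putinar-style arguments to the Banach (or otherwise well-behaved) module setting.
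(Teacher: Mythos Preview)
Your argument for the second inclusion is correct and essentially identical to the paper's.

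Your argument for the first inclusion, however, has a gap. You invoke the ``in particular'' clause of Proposition~\ref{propDom12} directly with the object $\mathcal{A}\in\operatorname{res}(\mathcal{S},X)$ and $\mathcal{B}=\mathbb{C}(\lambda)$. But that clause is just the main statement specialized to $\mathcal{B}=\mathbb{C}(\gamma)$, and the standing hypothesis $(\mathcal{A},\iota)\gg\mathbb{C}(\gamma)$ is still required. This dominance is \emph{not} automatic for an arbitrary nuclear object $\mathcal{A}$ of $\mathcal{S}$: by Lemma~\ref{lemDom12} it would follow if $\iota:A\to\mathcal{A}$ were a localization, but nothing in the hypotheses guarantees that (take for instance $A=\mathbb{C}[x]$, $\mathcal{A}=\mathbb{C}[x]/(x^{2})$, $\gamma$ the unique character; then $\operatorname{Tor}_{1}^{A}(\mathcal{A},\mathbb{C}(0))\neq 0$ and $(\mathcal{A},\iota)\gg\mathbb{C}(0)$ fails).

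The paper repairs this exactly the way you already know how to from your second argument: starting from $\mathbb{C}(\lambda)\in U_{\mathcal{A}}$ in $\mathcal{S}^{\sim}$, refine through the localizing basis to get $\mathcal{B}\in\mathfrak{t}$ with $\mathbb{C}(\lambda)\in U_{\mathcal{B}}\subseteq U_{\mathcal{A}}$. The morphism $\mathcal{A}\to\mathcal{B}$ together with $\mathcal{A}\in\operatorname{res}(\mathcal{S},X)$ gives $\mathcal{B}\perp X$, and now $A\to\mathcal{B}$ \emph{is} a localization, so Corollary~\ref{corDom12} (equivalently, Proposition~\ref{propDom12} via Lemma~\ref{lemDom12}) legitimately yields $\mathbb{C}(\lambda)\perp X$. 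In short: the localizing basis is needed in \emph{both} halves of the proof, not just the second.
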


\begin{proof}
Take a trivial module $\mathbb{C}\left(  \gamma\right)  $ from
$\operatorname{res}_{\operatorname{P}}\left(  \mathcal{S},X\right)  $. By
Definition \ref{def12}, we have $\gamma\in\operatorname{Spec}\left(
\mathcal{A}\right)  $ for a certain object $\mathcal{A}\in\operatorname{res}%
\left(  \mathcal{S},X\right)  $. It follows that $\mathbb{C}\left(
\gamma\right)  \in U_{\mathcal{A}}$ in the point completion $\mathcal{S}%
^{\sim}$. Then $\mathbb{C}\left(  \gamma\right)  \in U_{\mathcal{B}}$ and
$U_{\mathcal{B}}\subseteq U_{\mathcal{A}}$ for a certain $\mathcal{B}%
\in\mathfrak{t}$ (see Subsection \ref{subsecPB}), that is, there are morphisms
$\mathcal{A\rightarrow B}\rightarrow\mathbb{C}\left(  \gamma\right)  $ in
$\mathcal{S}^{\sim}$. But $\mathcal{B}$ is nuclear, $A\rightarrow\mathcal{B}$
is a localization, and the transversality $\mathcal{B}\perp X$ holds. Using
Corollary \ref{corDom12} (see also Proposition \ref{propDom12}), we deduce
that $\mathbb{C}\left(  \gamma\right)  \perp X$ holds too, that is,
$\mathbb{C}\left(  \gamma\right)  \in\operatorname{res}\left(  \mathcal{S}%
^{\sim}\cap\mathcal{T},X\right)  $. Hence $\sigma\left(  \mathcal{S}^{\sim
}\cap\mathcal{T},X\right)  \subseteq\sigma_{\operatorname{P}}\left(
\mathcal{S},X\right)  $.

Now assume that $X$ is an $\mathcal{S}$-local left $A$-module, and take a
trivial $A$-module $\mathbb{C}\left(  \gamma\right)  $ from $\sigma
_{\operatorname{P}}\left(  \mathcal{S},X\right)  $. If $\mathbb{C}\left(
\gamma\right)  $ stays out of the closure $\sigma\left(  \mathcal{S}^{\sim
}\cap\mathcal{T},X\right)  ^{-}$ (in $\mathcal{S}^{\sim}$), then
$U_{\mathcal{A}}\cap\sigma\left(  \mathcal{S}^{\sim}\cap\mathcal{T},X\right)
=\varnothing$ for some object $\mathcal{A}$ of the category $\mathcal{S}$.
Since $\mathfrak{t}$ is a point base for the point completion $\mathcal{S}%
^{\sim}$, it follows that $\mathbb{C}\left(  \gamma\right)  \in U_{\mathcal{B}%
}$ and $U_{\mathcal{B}}\subseteq U_{\mathcal{A}}$ with $\mathcal{B}%
\in\mathfrak{t}$. In particular, $U_{\mathcal{B}}\cap\mathcal{T\subseteq
}\operatorname{res}\left(  \mathcal{S}^{\sim}\cap\mathcal{T},X\right)  $,
which means that $\mathbb{C}\left(  \lambda\right)  \perp X$ for all
$\lambda\in\operatorname{Spec}\left(  \mathcal{B}\right)  $. But $\mathcal{B}$
is local and $X$ is an $\mathcal{S}$-local left $A$-module, therefore
$\mathcal{B}\perp X$ holds too. Using again Corollary \ref{corDom12}, we
conclude that $\mathcal{C}\perp X$ holds for every every morphism
$\mathcal{B\rightarrow C}$ in $\mathcal{S}$. Recall that both $\mathcal{B}$
and $\mathcal{C}$ are nuclear. By Definition \ref{def11}, we have
$\mathcal{B\in}\operatorname{res}\left(  \mathcal{S},X\right)  $ and
$\gamma\in\operatorname{Spec}\left(  \mathcal{B}\right)  $, which in turn
implies that $\mathbb{C}\left(  \gamma\right)  \in\operatorname{res}%
_{\operatorname{P}}\left(  \mathcal{S},X\right)  $ (see Definition
\ref{def12}), a contradiction. Whence $\mathbb{C}\left(  \gamma\right)
\in\sigma\left(  \mathcal{S}^{\sim}\cap\mathcal{T},X\right)  ^{-}$.
\end{proof}

\subsection{The spectral mapping theorem}

Let $\mathcal{S}$ be an $A$-category and let $\mathcal{A}$ be an object of
$\mathcal{S}$. The object $\mathcal{A}$ in turn defines the $A$-category
$U_{\mathcal{A}}$ to be an open subcategory of $\mathcal{S}$, whose point
completion $U_{\mathcal{A}}^{\sim}$ is obtained by adding up all trivial
modules $\mathbb{C}\left(  \lambda\right)  $, $\lambda\in\operatorname{Spec}%
\left(  \mathcal{A}\right)  $. Actually, $U_{\mathcal{A}}$ is in turn a unital
$\mathcal{A}$-category by treating $\mathcal{A}$ as a new base algebra.
Moreover, the morphism $A\rightarrow\mathcal{A}$ can be extended up to a
morphism $F:\mathcal{S}^{\sim}\rightarrow U_{\mathcal{A}}^{\sim}$ of the
categories. Namely, $F$ is the identity map over all objects and morphisms
from $U_{\mathcal{A}}^{\sim}$, and it is trivial otherwise. For every
subcategory $\mathcal{V\subseteq S}^{\sim}$ we use the notation $\mathcal{V}%
|_{\mathcal{A}}$ instead of $F\left(  \mathcal{V}\right)  $, that is,
$\mathcal{V}|_{\mathcal{A}}$ is the projection $\mathcal{V}|_{\mathcal{A}%
}=\mathcal{V}\cap U_{\mathcal{A}}^{\sim}$ of $\mathcal{V}$, which turns out to
be an $\mathcal{A}$-category. In particular, $\mathcal{T}|_{\mathcal{A}%
}=\mathcal{T}\cap U_{\mathcal{A}}^{\sim}=\operatorname{Spec}\left(
\mathcal{A}\right)  $.

\begin{theorem}
\label{thSMT}Let $A$ be a finite type algebra, $\mathcal{S}$ a nuclear
$A$-category with its point completion $\mathcal{S}^{\sim}$ and a point basis
$\mathfrak{t}$, $\mathcal{A}$ an object of $\mathcal{S}$ with its canonical
homomorphism $\iota:A\rightarrow\mathcal{A}$, and let $X$ be a left $A$-module
such that the dominance $\left(  \mathcal{A},\iota\right)  \gg X$ holds. Then
$X$ has a left $\mathcal{A}$-module structure extending its original one
through $\iota$, and the following equalities
\[
\sigma\left(  \mathcal{S},X\right)  |_{\mathcal{A}}=\sigma\left(
U_{\mathcal{A}},X\right)  ,\quad\sigma_{\operatorname{P}}\left(
\mathcal{S},X\right)  |_{\mathcal{A}}=\sigma_{\operatorname{P}}\left(
U_{\mathcal{A}},X\right)  ,\quad\sigma\left(  \mathcal{S}^{\sim}%
\cap\mathcal{T},X\right)  |_{\mathcal{A}}=\sigma\left(  \operatorname{Spec}%
\left(  \mathcal{A}\right)  ,X\right)
\]
hold, where $U_{\mathcal{A}}$ is considered to be an $\mathcal{A}$-category.
\end{theorem}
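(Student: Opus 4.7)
The existence of the left $\mathcal{A}$-module structure on $X$ extending the original $A$-module structure through $\iota$ is immediate from Theorem~\ref{tHTf} applied with $\mathcal{Y}=\mathcal{A}$, $\mathcal{B}=\mathcal{A}$, and $\eta=\iota$: the dominance hypothesis $(\mathcal{A},\iota)\gg X$ is exactly the input that Theorem~\ref{tHTf} converts into the desired $\mathcal{A}$-module structure.

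The homological heart of the three identities is a base-change comparison that I would isolate first: for every right Fr\'{e}chet $\mathcal{A}$-module $Y$, one has $\operatorname{Tor}_{k}^{A}(Y,X)=\operatorname{Tor}_{k}^{\mathcal{A}}(Y,X)$ for all $k\geq 0$. To prove it, fix a finite free $A$-bimodule resolution $\mathcal{R}^{\bullet}=A\widehat{\otimes}\mathfrak{e}^{\bullet}\widehat{\otimes}A$ (available since $A$ is of finite type), so that $\mathcal{P}^{\bullet}=\mathcal{R}^{\bullet}\widehat{\otimes}_{A}X=A\widehat{\otimes}\mathfrak{e}^{\bullet}\widehat{\otimes}X$ is an admissible free $A$-resolution of $X$ as in Subsection~\ref{subsecTrans}. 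Tensoring on the left by $\mathcal{A}$ over $A$ yields the nuclear complex $\mathcal{A}\widehat{\otimes}\mathfrak{e}^{\bullet}\widehat{\otimes}X$ of free $\mathcal{A}$-modules, whose homology is $\operatorname{Tor}_{\bullet}^{A}(\mathcal{A},X)$; by $(\mathcal{A},\iota)\gg X$ this vanishes in positive degrees and reduces to $X$ in degree zero. Exactness in the nuclear setting then forces admissibility (a Taylor-type implication in the spirit of \cite[Proposition 1.6]{Tay2}), so $\mathcal{A}\widehat{\otimes}\mathfrak{e}^{\bullet}\widehat{\otimes}X\to X$ is a genuine free $\mathcal{A}$-resolution of $X$. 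Using the canonical identifications $Y\widehat{\otimes}_{\mathcal{A}}(\mathcal{A}\widehat{\otimes}E)=Y\widehat{\otimes}E=Y\widehat{\otimes}_{A}(A\widehat{\otimes}E)$ on each term, the complex $Y\widehat{\otimes}_{\mathcal{A}}(\mathcal{A}\widehat{\otimes}\mathfrak{e}^{\bullet}\widehat{\otimes}X)$ coincides termwise, and with matching differentials, with $Y\widehat{\otimes}_{A}\mathcal{P}^{\bullet}$, yielding the equality of Tors.

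The first equality then follows by a definition-chase. Any morphism $\mathcal{C}\to\mathcal{D}$ in $\mathcal{S}$ out of an object $\mathcal{C}\in U_{\mathcal{A}}$ has $\mathcal{D}\in U_{\mathcal{A}}$ (compose $\mathcal{A}\to\mathcal{C}\to\mathcal{D}$), so the morphism set out of $\mathcal{C}$ is the same whether taken in $\mathcal{S}$ or in $U_{\mathcal{A}}$; combined with the base-change identity, the transversality conditions $\mathcal{D}\perp_{A}X$ and $\mathcal{D}\perp_{\mathcal{A}}X$ agree, which gives $\operatorname{res}(\mathcal{S},X)\cap U_{\mathcal{A}}=\operatorname{res}(U_{\mathcal{A}},X)$. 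The Taylor equality is the same argument restricted to the discrete subcategories of trivial modules: a character $\mu\in\operatorname{Spec}(\mathcal{A})$ corresponds to $\mathbb{C}(\mu\circ\iota)\in U_{\mathcal{A}}^{\sim}\cap\mathcal{T}$, and the base-change identity with $Y=\mathbb{C}(\mu)$ identifies the two transversality conditions.

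Finally, the Putinar equality is the most delicate. The inclusion $\operatorname{res}_{\operatorname{P}}(U_{\mathcal{A}},X)\subseteq\operatorname{res}_{\operatorname{P}}(\mathcal{S},X)\cap U_{\mathcal{A}}^{\sim}$ is immediate from the first equality, since $\operatorname{res}(U_{\mathcal{A}},X)\subseteq\operatorname{res}(\mathcal{S},X)\cap U_{\mathcal{A}}$. For the reverse, pick $\mathbb{C}(\lambda)$ in the right-hand side; then $\mathbb{C}(\lambda)\in\operatorname{res}(\mathcal{S},X)^{\sim}\cap U_{\mathcal{A}}^{\sim}$, which is an open subset of $\mathcal{S}^{\sim}$. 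The point basis $\mathfrak{t}$ produces $\mathcal{B}\in\mathfrak{t}$ with $\mathbb{C}(\lambda)\in U_{\mathcal{B}}\subseteq\operatorname{res}(\mathcal{S},X)^{\sim}\cap U_{\mathcal{A}}^{\sim}$. Being an algebra rather than a trivial module, $\mathcal{B}$ itself must lie in $\operatorname{res}(\mathcal{S},X)\cap U_{\mathcal{A}}=\operatorname{res}(U_{\mathcal{A}},X)$, and $\lambda\in\operatorname{Spec}(\mathcal{B})$ places $\mathbb{C}(\lambda)$ in $\operatorname{res}_{\operatorname{P}}(U_{\mathcal{A}},X)$. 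I expect the main obstacle to be the admissibility step in paragraph two: turning the exact nuclear base-changed complex into an \emph{admissible} $\mathcal{A}$-resolution (not merely an exact one), so that it genuinely computes $\operatorname{Tor}^{\mathcal{A}}$; the nuclear hypothesis on $\mathcal{S}$ together with a Taylor-style argument is what unlocks this.
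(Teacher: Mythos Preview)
Your overall architecture matches the paper's: reduce everything to the statement that for $\mathcal{B}\in U_{\mathcal{A}}$ the transversality $\mathcal{B}\perp X$ holds over $A$ iff it holds over $\mathcal{A}$, then chase definitions (the Putinar case via the point basis $\mathfrak{t}$ is done exactly as you do it). The one place where you diverge from the paper is precisely the step you flag as the obstacle, and your proposed resolution of it is not the right one.

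You try to promote the exact complex $\mathcal{A}\widehat{\otimes}\mathfrak{e}^{\bullet}\widehat{\otimes}X\to X\to 0$ to an \emph{admissible} free $\mathcal{A}$-resolution, invoking nuclearity. But exactness of a complex of nuclear Fr\'{e}chet spaces does not force it to split in $\mathfrak{Fs}$; indeed the paper explicitly calls this a ``left free (non-split) resolution'' of $X$ in $\overline{\mathcal{A}\text{-mod}}$. The correct move is not to manufacture admissibility but to bypass it: since each $\mathcal{B}\in U_{\mathcal{A}}$ is a \emph{nuclear} right $\mathcal{A}$-module, the result \cite[Corollary 3.1.13]{EP} (or \cite[Proposition 2.6]{Tay2}) lets you test $\mathcal{B}\perp_{\mathcal{A}}X$ using this merely exact free resolution. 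That is, $\mathcal{B}\perp_{\mathcal{A}}X$ holds iff $\mathcal{B}\widehat{\otimes}_{\mathcal{A}}\bigl(\mathcal{A}\widehat{\otimes}\mathfrak{e}^{\bullet}\widehat{\otimes}X\bigr)$ is exact; but this complex is literally $\mathcal{B}\widehat{\otimes}\mathfrak{e}^{\bullet}\widehat{\otimes}X=\mathcal{B}\widehat{\otimes}_{A}\mathcal{R}^{\bullet}\widehat{\otimes}_{A}X$, whose exactness is $\mathcal{B}\perp_{A}X$. So nuclearity enters through $\mathcal{B}$ (the hypothesis that $\mathcal{S}$ is nuclear), not through any splitting of the $\mathcal{A}$-resolution. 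Note also that you only need the equivalence of the transversality relations, not the full identification $\operatorname{Tor}_{k}^{A}(\mathcal{B},X)=\operatorname{Tor}_{k}^{\mathcal{A}}(\mathcal{B},X)$; the paper works at exactly that level.
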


\begin{proof}
As above in the proof of Proposition \ref{propDom12}, consider a finite free
$A$-bimodule resolution $\mathcal{R}^{\bullet}=A\widehat{\otimes}%
\mathfrak{e}^{\bullet}\widehat{\otimes}A$ of $A$ with a finite sequence
$\mathfrak{e}^{\bullet}=\left\{  E_{i}:-n\leq i\leq0\right\}  $ of (nuclear)
Fr\'{e}chet spaces. Since $X$ is a left $A$-module, it follows that
$\mathcal{R}^{\bullet}\widehat{\otimes}_{A}X$ is a finite free resolution of
$X$ in $A$-mod. The condition $\left(  \mathcal{A},\iota\right)  \gg X$ means
that the cochain complex
\[
\mathcal{A}\widehat{\otimes}_{A}\mathcal{R}^{\bullet}\widehat{\otimes}%
_{A}X\rightarrow X\rightarrow0
\]
is exact. In particular, $X$ has a left $\mathcal{A}$-module structure
extending its original one through $\iota$, and $\mathcal{A}\widehat{\otimes
}_{A}\mathcal{R}^{\bullet}\widehat{\otimes}_{A}X=\mathcal{A}\widehat{\otimes
}\mathfrak{e}^{\bullet}\widehat{\otimes}X$ turns out to be a left free
(non-split) resolution of $X$ in $\overline{\mathcal{A}\text{-mod}}$.
Moreover, every $\mathcal{B}\in U_{\mathcal{A}}$ is a right nuclear
$\mathcal{A}$-module. Using \cite[Corollary 3.1.13]{EP} (see also
\cite[Proposition 2.6]{Tay2}), we conclude that $\mathcal{B}\perp X$ over
$\mathcal{A}$ holds iff $\mathcal{B}\widehat{\otimes}_{\mathcal{A}}%
\mathcal{A}\widehat{\otimes}_{A}\mathcal{R}^{\bullet}\widehat{\otimes}_{A}X$
is exact, that is, the complex $\mathcal{B}\widehat{\otimes}_{A}%
\mathcal{R}^{\bullet}\widehat{\otimes}_{A}X$ is exact. But the exactness of
$\mathcal{B}\widehat{\otimes}_{A}\mathcal{R}^{\bullet}\widehat{\otimes}_{A}X$
means in turn that $\mathcal{B}\perp X$ over the original algebra $A$. Thus
$\mathcal{B}\perp X$ holds (or not) over $\mathcal{A}$ and $A$ simultaneously.
In particular,%
\[
\operatorname{res}\left(  \mathcal{S},X\right)  |_{\mathcal{A}}%
=\operatorname{res}\left(  \mathcal{S},X\right)  \cap U_{\mathcal{A}}^{\sim
}=\operatorname{res}\left(  \mathcal{S},X\right)  \cap U_{\mathcal{A}%
}=\operatorname{res}\left(  U_{\mathcal{A}},X\right)
\]
or $\sigma\left(  \mathcal{S},X\right)  |_{\mathcal{A}}=\sigma\left(
U_{\mathcal{A}},X\right)  $. In a similar way, we have
\[
\sigma\left(  \mathcal{S}^{\sim}\cap\mathcal{T},X\right)  |_{\mathcal{A}%
}=\sigma\left(  U_{\mathcal{A}}^{\sim}\cap\mathcal{T},X\right)  =\sigma\left(
\operatorname{Spec}\left(  \mathcal{A}\right)  ,X\right)  ,
\]
which is the Taylor spectrum of the left $\mathcal{A}$-module $X$.

It remains to prove the same equality for the Putinar spectrum. Take
$\mathbb{C}\left(  \lambda\right)  \in\sigma_{\operatorname{P}}\left(
\mathcal{S},X\right)  |_{\mathcal{A}}$ with $\lambda\in\operatorname{Spec}%
\left(  \mathcal{A}\right)  $. If $\mathbb{C}\left(  \lambda\right)
\in\operatorname{res}_{\operatorname{P}}\left(  U_{\mathcal{A}},X\right)  $,
then $\lambda\in\operatorname{Spec}\left(  \mathcal{B}\right)  $ for a certain
object $\mathcal{B}\in\operatorname{res}\left(  U_{\mathcal{A}},X\right)  $.
If $\mathcal{B\rightarrow C}$ is a morphism in $\mathcal{S}$, then it is a
morphism of the $\mathcal{A}$-category $U_{\mathcal{A}}$ too, therefore
$\mathcal{C}\perp X$ holds over $\mathcal{A}$. As we have just seen above, in
this case, the same transversality holds over $A$ too. It follows that
$\mathcal{B}\in\operatorname{res}\left(  \mathcal{S},X\right)  $, which means
that $\mathbb{C}\left(  \lambda\right)  \in\operatorname{res}%
_{\operatorname{P}}\left(  \mathcal{S},X\right)  $, a contradiction. Hence
$\sigma_{\operatorname{P}}\left(  \mathcal{S},X\right)  |_{\mathcal{A}%
}\subseteq\sigma_{\operatorname{P}}\left(  U_{\mathcal{A}},X\right)  $.

Conversely, take $\mathbb{C}\left(  \lambda\right)  \in\sigma
_{\operatorname{P}}\left(  U_{\mathcal{A}},X\right)  $. If $\mathbb{C}\left(
\lambda\right)  \notin\sigma_{\operatorname{P}}\left(  \mathcal{S},X\right)
|_{\mathcal{A}}$ then $\mathbb{C}\left(  \lambda\right)  \in\operatorname{res}%
_{\operatorname{P}}\left(  \mathcal{S},X\right)  $, which means that
$\lambda\in\operatorname{Spec}\left(  \mathcal{B}\right)  $ for some
$\mathcal{B\in}\operatorname{res}\left(  \mathcal{S},X\right)  $. Then
$\mathbb{C}\left(  \lambda\right)  $ belongs to $U_{\mathcal{A}}\cap
U_{\mathcal{B}}$ in the point completion $\mathcal{S}^{\sim}$. But there is a
point basis $\mathfrak{t}$ for $\mathcal{S}$, therefore $\mathbb{C}\left(
\lambda\right)  \in U_{\mathcal{C}}$ and $U_{\mathcal{C}}\subseteq
U_{\mathcal{A}}\cap U_{\mathcal{B}}$ for some object $\mathcal{C}$ in
$\mathcal{S}$. If $\mathcal{C\rightarrow D}$ is a morphism in $\mathcal{S}$,
then we come up with the compose morphism $\mathcal{B\rightarrow C\rightarrow
D}$, and $\mathcal{B}\perp X$ implies that $\mathcal{D}\perp X$ over $A$. But
$\mathcal{C}$ is an object of the $\mathcal{A}$-category $U_{\mathcal{A}}$ and
$\mathcal{D}\perp X$ holds over $\mathcal{A}$ too (as we have seen above) for
every morphism $\mathcal{C\rightarrow D}$. It follows that $\mathcal{C\in
}\operatorname{res}\left(  U_{\mathcal{A}},X\right)  $. But $\lambda
\in\operatorname{Spec}\left(  \mathcal{C}\right)  $, therefore $\mathbb{C}%
\left(  \lambda\right)  \in\operatorname{res}_{\operatorname{P}}\left(
U_{\mathcal{A}},X\right)  $, a contradiction.
\end{proof}

\begin{remark}
In the proof of Theorem \ref{thSMT},it suffices to have just a finite free
resolution $A\widehat{\otimes}\mathfrak{e}^{\bullet}\widehat{\otimes}A$ of
$A$, whose $\mathfrak{e}^{\bullet}$ are just Fr\'{e}chet spaces not
necessarily nuclear. Moreover, we used a point base $\mathfrak{t}$ for
$\mathcal{S}^{\sim}$ only in the proof of the equality $\sigma
_{\operatorname{P}}\left(  \mathcal{S},X\right)  |_{\mathcal{A}}%
=\sigma_{\operatorname{P}}\left(  U_{\mathcal{A}},X\right)  $.
\end{remark}

\section{Analytic geometries of Fr\'{e}chet algebras\label{sectionAFA}}

In this section we introduce a (noncommutative) complex analytic geometry of a
given Fr\'{e}chet algebra $A$ in terms of the unital complete-lattice
categories over $A$.

\subsection{The complete lattice $A$-category}

As above let $\mathcal{S}$ be an $A$-category. Assume that the set
$\operatorname{Hom}\left(  \mathcal{A},\mathcal{B}\right)  $ in the category
$\mathcal{S}$ consists of at most one element and the isomorphisms in
$\mathcal{S}$ are only identity maps, that is, $\mathcal{S}$ is a poset
category. In particular, $\operatorname{Hom}\left(  \mathcal{B},\mathcal{B}%
\right)  =\left\{  1_{\mathcal{B}}\right\}  $ and $\operatorname{Hom}\left(
A,\mathcal{B}\right)  $ is the unique arrow $A\rightarrow\mathcal{B}$ defining
the $A$-algebra structure on $\mathcal{B}$ for every object $\mathcal{B}$,
whenever $\mathcal{S}$ is unital. We put $\mathcal{A\leq B}$ if
$\operatorname{Hom}\left(  \mathcal{A},\mathcal{B}\right)  \neq\varnothing$,
which defines a partial order structure on the objects of $\mathcal{S}$ (we
can identity $\mathcal{S}$ with its objects). In this case, $A$ is the unique
least element if it is included into $\mathcal{S}$, and all trivial modules in
$\mathcal{S}$ are the maximal elements. Moreover, $U_{\mathcal{A}}=\left[
\mathcal{A},\infty\right]  $ is the right interval $\left\{  \mathcal{B}%
:\mathcal{A\leq B}\right\}  $, and the original topology of $\mathcal{S}$ is
reduced to the right order topology of the poset $\mathcal{S}$. A morphism
$F:\mathcal{S\rightarrow G}$ of the poset categories corresponds to a map of
the posets preserving the orders in $\mathcal{S}$ and $\mathcal{G}$, that is,
$\mathcal{A\leq B}$ in $\mathcal{S}$ implies that $F\left(  \mathcal{A}%
\right)  \leq F\left(  \mathcal{B}\right)  $ in $\mathcal{G}$.

\begin{lemma}
\label{lemCes1}A poset $A$-category $\mathcal{S}$ is a complete lattice if and
only if there exists infimum $\wedge U$ for every open subset $U\subseteq
\mathcal{S}$ bounded below. In this case, $\mathcal{S}$ is an irreducible
topological space.
\end{lemma}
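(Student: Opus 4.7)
The plan is to prove the equivalence by reducing the existence of arbitrary infima to that of infima of open (upper) subsets, and then to read off irreducibility from the resulting top element. The forward direction is immediate from the definition of a complete lattice, so essentially all of the content lies in the converse.

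For the converse, I would start with an arbitrary subset $T \subseteq \mathcal{S}$ bounded below by some $\mathcal{A}_0$, and form the upper set it generates, $U_T = \bigcup_{\mathcal{B} \in T} U_{\mathcal{B}}$. This is open in the Alexandrov topology of Subsection \ref{subsecAT} as a union of opens, and it is bounded below by $\mathcal{A}_0$, because every $\mathcal{C} \in U_T$ lies above some $\mathcal{B} \in T$ and hence above $\mathcal{A}_0$. By hypothesis, $\wedge U_T$ exists, and a short check of lower bounds would identify it with $\wedge T$: the inclusion $T \subseteq U_T$ makes $\wedge U_T$ a lower bound of $T$, while transitivity turns any lower bound of $T$ into a lower bound of $U_T$. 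This shows every bounded-below subset of $\mathcal{S}$ has an infimum.

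To upgrade to the full complete-lattice structure, I would use that the terminal algebra $\{0\} \in \mathcal{S}$ is the top element and plays the role of $\wedge \varnothing$, while $\wedge \mathcal{S}$ supplies the bottom whenever $\mathcal{S}$ is itself bounded below — automatic, for instance, when $\mathcal{S}$ is unital, since then $A$ lies below every object. Irreducibility would follow at once: a nonempty open (upper) subset necessarily contains the top $\{0\}$, so any two nonempty opens meet at $\{0\}$, which is precisely the irreducibility condition.

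The main technical step is the $U_T$ construction and the verification $\wedge U_T = \wedge T$. The remaining obstacle, which I expect to be minor, is handling the extremal elements cleanly: the hypothesis controls only bounded-below opens, so the existence of a bottom requires $\mathcal{S}$ to be bounded below as a whole, an assumption that is automatic whenever $A \in \mathcal{S}$.
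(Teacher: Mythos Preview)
Your proposal is correct and follows essentially the same route as the paper: both form the upper set $U_T=\bigcup_{\mathcal{B}\in T}U_{\mathcal{B}}$ generated by a bounded-below subset $T$, observe it is open and bounded below by the same bound, and identify $\wedge U_T=\wedge T$. The only visible difference is in the irreducibility clause: the paper argues via joins, writing $U_{\mathcal{A}}\cap U_{\mathcal{B}}=U_{\mathcal{A}\vee\mathcal{B}}$, whereas you use the terminal object $\{0\}\in\mathcal{S}$ as a common point of all nonempty opens; both are one-line arguments, and yours has the small advantage of not invoking the lattice structure at all. Your remark about the bottom element is also well placed: the paper's proof simply asserts that bounded-below infima yield bounded-above suprema and calls this a complete lattice, while you correctly flag that a genuine bottom requires $\mathcal{S}$ itself to be bounded below, which is exactly the unital hypothesis imposed immediately afterward in the paper.
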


\begin{proof}
Suppose $\wedge U$ does exist for open subset $U\subseteq\mathcal{S}$ bounded
below. Take a subset $M\subseteq\mathcal{S}$ bounded below, and define
$U=\cup\left\{  U_{\mathcal{A}}:\mathcal{A\in}M\right\}  $ to be an open set
containing $M$. If $\mathcal{B}$ is a lower bound of $M$ and $\mathcal{C\in}%
U$, then $\mathcal{B\leq A\leq C}$ for a certain $\mathcal{A\in}M$. It follows
that $U$ is bounded below by $\mathcal{B}$ itself, and there exists $\wedge U$
by assumption. Moreover, $\mathcal{B\leq}\wedge U\leq M$ for every lower bound
$\mathcal{B}$ of $M$. Hence $\wedge U=\wedge M$. Thus every subset bounded
below has the greatest lower bound, which in turn implies that every subset
bounded above has the least upper bound, that is, $\mathcal{S}$ is a complete lattice.

Finally, $U_{\mathcal{A}}\cap U_{\mathcal{B}}=\left[  \mathcal{A}%
,\infty\right]  \cap\left[  \mathcal{B},\infty\right]  =\left[  \mathcal{A}%
\vee\mathcal{B},\infty\right]  =U_{\mathcal{A}\vee\mathcal{B}}$ for all
$\mathcal{A}$ and $\mathcal{B}$ whenever $\mathcal{S}$ is a lattice. It
follows that $\mathcal{S}$ is an irreducible topological space.
\end{proof}

Now assume that $\mathcal{S}$ is a complete lattice $A$-category containing
the algebra $A$ itself, that is, $\mathcal{S}$ is\textit{ a unital complete
lattice }$A$\textit{-category}. In this case, every subset is bounded below by
$A$ being the least element of $\mathcal{S}$. Moreover $\mathcal{S}$ turns out
to be a quasicompact, irreducible topological space (see Lemma \ref{lemCes1}),
and $\mathcal{T=}\operatorname{Spec}\left(  A\right)  \subseteq\mathcal{S}%
^{\sim}$. The (poset) category of all open subsets of the topological space
$\mathcal{S}$ is denoted by $\mathcal{S}_{\tau}$. The functor
\[
\mathcal{S}_{\tau}^{\operatorname{op}}\rightarrow\mathfrak{Fa},\quad
U\mapsto\mathcal{S}^{\tau}\left(  U\right)  ,\quad\mathcal{S}^{\tau}\left(
U\right)  =\wedge U,\quad U\in\mathcal{S}_{\tau}%
\]
defines a Fr\'{e}chet algebra presheaf on $\mathcal{S}$, whose global sections
are the elements of the original algebra $A$. Thus $\mathcal{S}^{\tau}$ is a
Fr\'{e}chet algebra $A$-presheaf that corresponds to a unital complete lattice
$A$-category $\mathcal{S}$. The stalks of $\mathcal{S}^{\tau}$ consists of the
algebras $\mathcal{A}$ from $\mathcal{S}$, and the sections of the sheaf
$\left(  \mathcal{S}^{\tau}\right)  ^{+}$ associated to the presheaf
$\mathcal{S}^{\tau}$ over an open subset $U$ consists of the compatible
families $\left\{  e\left(  \mathcal{A}\right)  :\mathcal{A}\in U\right\}  $
from $\prod\left\{  \mathcal{A}:\mathcal{A}\in U\right\}  $ in the sense of
that $e\left(  \mathcal{A}\right)  \mapsto e\left(  \mathcal{B}\right)  $
whenever $\mathcal{A\leq B}$ in $U$. In particular, $\Gamma\left(
\mathcal{S},\left(  \mathcal{S}^{\tau}\right)  ^{+}\right)  =A$ (recall that
$A$ is an object of $\mathcal{S}$), and $\mathcal{S}^{\tau}\left(  U\right)
=\left(  \mathcal{S}^{\tau}\right)  ^{+}\left(  U\right)  =\mathbb{C}\left(
\lambda\right)  $ whenever $U=\left\{  \mathbb{C}\left(  \lambda\right)
\right\}  $ is a trivial open set in $\mathcal{S}$. By passing to the point
completion of $\mathcal{S}$, the related sheaf $\left(  \mathcal{S}^{\tau
}\right)  ^{+}$ is completed by the trivial stalks.

Now let $\mathcal{G}$ be another unital complete lattice $B$-category, and let
$F:\mathcal{S\rightarrow G}$ be a morphism of the unital complete lattice
categories. Thus $F$ is a functor with a compatible morphisms $\mathcal{A}%
\rightarrow F\left(  \mathcal{A}\right)  $ and $f:A\rightarrow B$ of the
Fr\'{e}chet algebras. As we have mentioned above in Subsection \ref{subsecAT},
$F$ is continuous with respect to their Aleksandrov topologies, that is, it
defines the functor $\mathcal{G}_{\tau}\rightarrow\mathcal{S}_{\tau}$,
$V\mapsto F^{-1}\left(  V\right)  $ of the related poset categories of open
subsets. If the functor $\mathcal{G}_{\tau}\rightarrow\mathcal{S}_{\tau}$
implements a category equivalence, we say that $F$ is \textit{a strong
morphism. }In this case, we obtain that $F$ is open ($F\left(  U\right)
\in\mathcal{G}_{\tau}$ for every $U\in\mathcal{S}_{\tau}$) and it defines the
category equivalence $F:\mathcal{S}_{\tau}\rightarrow\mathcal{G}_{\tau}$ of
the related poset categories. In particular, we can identify the poset
categories $\mathcal{S}_{\tau}$ and $\mathcal{G}_{\tau}$ by claiming that they
have a common poset category of open subsets denoted by $\Omega$. The identity
functor $\mathcal{S\rightarrow S}$ is a strong morphism, and the compose of
two strong morphisms is a strong one. Thus, we come up with a new category of
the unital complete lattice categories with their strong morphisms and the
common poset category $\Omega$ of open subsets, which is denoted by
$\mathfrak{C}_{\Omega}$. We use the notation $\mathfrak{Fa}_{\Omega}$ for the
category of all Fr\'{e}chet algebra presheaves on $\Omega$, which is the
functor category $\Omega^{\operatorname{op}}\rightarrow\mathfrak{Fa}$.

\begin{lemma}
\label{lemCes2}If $F:\mathcal{S}\rightarrow\mathcal{G}$ is a morphism of the
category $\mathfrak{C}_{\Omega}$, then it defines a morphism of the presheaves
$\mathcal{S}^{\tau}\rightarrow\mathcal{G}^{\tau}$ on $\Omega$. Thus we have
the functor $\mathfrak{C}_{\Omega}\rightarrow\mathfrak{Fa}_{\Omega}$.
\end{lemma}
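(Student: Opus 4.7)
The plan is to produce, for each open $U$ in the common poset $\Omega$ of open subsets, a Fr\'{e}chet algebra morphism $F^{\tau}_{U}:\mathcal{S}^{\tau}(U)\rightarrow\mathcal{G}^{\tau}(U)$ factoring as
\[
\wedge U_{\mathcal{S}}\xrightarrow{\alpha_{\wedge U_{\mathcal{S}}}}F(\wedge U_{\mathcal{S}})\xrightarrow{\beta_{U}}\wedge U_{\mathcal{G}},
\]
where $U_{\mathcal{S}}\in\mathcal{S}_{\tau}$ and $U_{\mathcal{G}}\in\mathcal{G}_{\tau}$ are the opens paired with $U$ under the equivalence implemented by $F$. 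The first arrow is the component $\alpha_{\mathcal{A}}:\mathcal{A}\rightarrow F(\mathcal{A})$ of the family of compatible morphisms packaged with the $\mathfrak{C}_{\Omega}$-morphism $F$, applied at $\mathcal{A}=\wedge U_{\mathcal{S}}$; the second is the canonical arrow in the poset $\mathcal{G}$ witnessing $F(\wedge U_{\mathcal{S}})\leq\wedge U_{\mathcal{G}}$, which I still need to produce.

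To build $\beta_{U}$ I would first verify the identity $U_{\mathcal{G}}=F(U_{\mathcal{S}})$. The induced $F^{-1}:\mathcal{G}_{\tau}\rightarrow\mathcal{S}_{\tau}$ is an equivalence of \emph{thin} categories and is therefore an order-bijection, whose inverse is, by openness of the strong morphism $F$, the direct-image map $U_{\mathcal{S}}\mapsto F(U_{\mathcal{S}})$. Hence every $\mathcal{C}\in U_{\mathcal{G}}$ is of the form $F(\mathcal{A})$ for some $\mathcal{A}\in U_{\mathcal{S}}$. Order-preservation of the functor $F$ then yields $F(\wedge U_{\mathcal{S}})\leq F(\mathcal{A})=\mathcal{C}$ for every such $\mathcal{C}$, so $F(\wedge U_{\mathcal{S}})$ is a lower bound of $U_{\mathcal{G}}$ in $\mathcal{G}$; the desired inequality $F(\wedge U_{\mathcal{S}})\leq\wedge U_{\mathcal{G}}$ is then immediate from the definition of infimum, and delivers $\beta_{U}$.

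For naturality, given $V\subseteq U$ in $\Omega$ the outer square
\[
\begin{array}{ccc}
\wedge U_{\mathcal{S}} & \longrightarrow & \wedge U_{\mathcal{G}}\\
\downarrow &  & \downarrow\\
\wedge V_{\mathcal{S}} & \longrightarrow & \wedge V_{\mathcal{G}}
\end{array}
\]
is split by inserting $F(\wedge U_{\mathcal{S}})$ and $F(\wedge V_{\mathcal{S}})$ between the columns. The left half commutes by naturality of the family $\alpha$ applied to the order morphism $\wedge U_{\mathcal{S}}\rightarrow\wedge V_{\mathcal{S}}$ of $\mathcal{S}$, and the right half commutes because $\mathcal{G}$ is thin, so any two parallel morphisms in $\mathcal{G}$ coincide. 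Functoriality $F\mapsto F^{\tau}$ is then straightforward: the identity morphism of $\mathfrak{C}_{\Omega}$ has $\alpha=\operatorname{id}$ and yields the identity presheaf morphism, while composition compatibility is obtained by pasting the $\alpha$-squares of the composite pair, using once more the thinness of the target category.

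I expect the principal obstacle to be precisely the identity $F(U_{\mathcal{S}})=U_{\mathcal{G}}$ invoked in the second paragraph. Order-preservation alone only shows that $F(\wedge U_{\mathcal{S}})$ is a lower bound of $F(U_{\mathcal{S}})$, which is insufficient whenever $U_{\mathcal{G}}$ is strictly larger than $F(U_{\mathcal{S}})$; the strong-morphism hypothesis is exactly what forbids this. Extracting it requires the careful interpretation that \emph{equivalence of thin categories} collapses to an order-isomorphism whose inverse is the direct image $F(\cdot)$ on opens. Once this is in place, everything else is routine bookkeeping inherited from the poset structure of $\mathcal{S}$ and $\mathcal{G}$.
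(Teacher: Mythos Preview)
Your argument is correct and in fact slightly more economical than the paper's. Both proofs begin by identifying $U_{\mathcal{G}}=F(U_{\mathcal{S}})$ via the strong-morphism hypothesis. From there the paper proves the \emph{equality} $\wedge F(U)=F(\wedge U)$ by a contradiction argument (assuming $F(\wedge U)<\wedge F(U)$, producing a second open with the same $F$-image, and invoking the order-bijection $\mathcal{S}_{\tau}\simeq\mathcal{G}_{\tau}$), so that the presheaf map $\mathcal{S}^{\tau}(U)\to\mathcal{G}^{\tau}(U)$ is simply $\alpha_{\wedge U}$. You instead use only the trivial inequality $F(\wedge U_{\mathcal{S}})\leq\wedge U_{\mathcal{G}}$ coming from order-preservation and compose $\alpha_{\wedge U_{\mathcal{S}}}$ with the resulting poset arrow $\beta_{U}$; naturality and functoriality then follow from thinness of $\mathcal{G}$ (resp.\ $\mathcal{H}$), exactly as you say. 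Your approach avoids the contradiction step entirely at the price of not recording that $F$ actually preserves infima of opens; since the lemma only asks for a presheaf morphism, nothing is lost, and in fact $\beta_U$ is the identity once the paper's equality is known, so the two constructions agree.
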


\begin{proof}
First take an object $\mathcal{A}$ from $\mathcal{S}$. Then $U_{\mathcal{A}%
}\in\Omega$ and $F\left(  U_{\mathcal{A}}\right)  $ is open in $\mathcal{G}$
with $F\left(  U_{\mathcal{A}}\right)  \subseteq U_{F\left(  \mathcal{A}%
\right)  }$. But $F\left(  \mathcal{A}\right)  \in F\left(  U_{\mathcal{A}%
}\right)  $ and $U_{F\left(  \mathcal{A}\right)  }$ is the smallest open
subset containing $F\left(  \mathcal{A}\right)  $. Therefore $F\left(
U_{\mathcal{A}}\right)  =U_{F\left(  \mathcal{A}\right)  }$. In particular,
for every morphism $F\left(  \mathcal{A}\right)  \rightarrow\mathcal{C}$ in
$\mathcal{G}$, we deduce that $\mathcal{C\in}U_{F\left(  \mathcal{A}\right)
}$ or $\mathcal{C\in}F\left(  U_{\mathcal{A}}\right)  $, which means that
$\mathcal{C=}F\left(  \mathcal{B}\right)  $ with a morphism
$\mathcal{A\rightarrow B}$ in $\mathcal{S}$.

Now take an open subset $U\subseteq\mathcal{S}$ with $\mathcal{A=}\wedge U$.
Since $F\left(  \mathcal{A}\right)  \leq F\left(  U\right)  $ in $\mathcal{G}%
$, it follows that $F\left(  \mathcal{A}\right)  \leq\wedge F\left(  U\right)
$, that is, there is a morphism $F\left(  \mathcal{A}\right)  \rightarrow
\wedge F\left(  U\right)  $ in $\mathcal{G}$. Using the fact that we have
pointed out above, we obtain that $\wedge F\left(  U\right)  =F\left(
\mathcal{B}\right)  $ for some morphism $\mathcal{A\rightarrow B}$ in
$\mathcal{S}$. Prove that $\mathcal{A=B}$. If that is not the case, then
$\mathcal{A\leq B}$, $\mathcal{A\neq B}$ and $\mathcal{B}$ is not a lower
bound of $U$. It follows that there exists a morphism $\mathcal{C\rightarrow
B}$ in $\mathcal{S}$ with $\mathcal{C\in}U$, which in turn implies that
$F\left(  \mathcal{C}\right)  \leq F\left(  \mathcal{B}\right)  $ and
$F\left(  \mathcal{C}\right)  \in F\left(  U\right)  $. But $F\left(
U\right)  $ is open, therefore $F\left(  \mathcal{B}\right)  \in F\left(
U\right)  $ and $F\left(  \mathcal{B}\right)  =F\left(  \mathcal{C}\right)  $.
Thus $F\left(  U\right)  =U_{F\left(  \mathcal{B}\right)  }=F\left(
U_{\mathcal{B}}\right)  $. But $U,U_{\mathcal{B}}\in\mathcal{S}_{\tau}$ and
$F:\mathcal{S}_{\tau}\rightarrow\mathcal{G}_{\tau}$ implements a category
equivalence of the posets. It follows that $U=U_{\mathcal{B}}$ and
$\mathcal{A=}\wedge U=\wedge U_{\mathcal{B}}=\mathcal{B}$, a contradiction.
Hence
\[
\mathcal{G}^{\tau}\left(  F\left(  U\right)  \right)  =\wedge F\left(
U\right)  =F\left(  \wedge U\right)  =F\left(  \mathcal{S}^{\tau}\left(
U\right)  \right)
\]
holds for every open subset $U\in\Omega$. But $F$ is a morphism, which imposes
that there are compatible morphisms $\mathcal{S}^{\tau}\left(  U\right)
\rightarrow F\left(  \mathcal{S}^{\tau}\left(  U\right)  \right)  $. Thus $F$
defines the presheaf morphism $\mathcal{S}^{\tau}\rightarrow\mathcal{G}^{\tau
}$ on $\Omega$.
\end{proof}

\begin{proposition}
\label{corCes1}The functor $\mathfrak{C}_{\Omega}\rightarrow\mathfrak{Fa}%
_{\Omega}$ is a category equivalence.
\end{proposition}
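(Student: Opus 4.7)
The plan is to prove that $\Phi:\mathcal{S}\mapsto\mathcal{S}^{\tau}$ of Lemma \ref{lemCes2} is an equivalence by verifying fully faithfulness and essential surjectivity, with the latter achieved through an explicit quasi-inverse $\Psi:\mathfrak{Fa}_{\Omega}\to\mathfrak{C}_{\Omega}$. Faithfulness is essentially immediate: since the target $\mathcal{G}$ is a thin poset category, any strong morphism $F$ is determined by the object assignment $\mathcal{A}\mapsto F(\mathcal{A})$ together with the attached algebra maps $\mathcal{A}\to F(\mathcal{A})$. By the identity $F(U_{\mathcal{A}})=U_{F(\mathcal{A})}$ and $\mathcal{G}^{\tau}(F(U_{\mathcal{A}}))=F(\mathcal{S}^{\tau}(U_{\mathcal{A}}))$ established in Lemma \ref{lemCes2}, the object value $F(\mathcal{A})=\wedge_{\mathcal{G}}U_{\mathcal{A}}=\mathcal{G}^{\tau}(U_{\mathcal{A}})$ and the algebra map $\mathcal{A}\to F(\mathcal{A})$ are precisely the component $\Phi(F)_{U_{\mathcal{A}}}$, so $F$ is reconstructible from $\Phi(F)$.

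For fullness, given a natural transformation $\alpha:\mathcal{S}^{\tau}\to\mathcal{G}^{\tau}$, I would define $F:\mathcal{S}\to\mathcal{G}$ on objects by $F(\mathcal{A}):=\mathcal{G}^{\tau}(U_{\mathcal{A}})$, with the accompanying algebra morphism given by the component $\alpha_{U_{\mathcal{A}}}:\mathcal{A}=\mathcal{S}^{\tau}(U_{\mathcal{A}})\to\mathcal{G}^{\tau}(U_{\mathcal{A}})=F(\mathcal{A})$. Order preservation follows from the fact that $\mathcal{A}\leq\mathcal{B}$ forces $U_{\mathcal{B}}\subseteq U_{\mathcal{A}}$, inducing the required restriction in $\mathcal{G}^{\tau}$; the base morphism $f:A\to B$ coincides with the global component $\alpha_{\mathcal{S}}$; and the strong-morphism condition reduces to the tautology that $\mathcal{S}_{\tau}$ and $\mathcal{G}_{\tau}$ are already identified with $\Omega$ in the category $\mathfrak{C}_{\Omega}$. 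The equality $\Phi(F)=\alpha$ is then obtained by naturality of $\alpha$ along the inclusions $U_{\mathcal{A}}\hookrightarrow U$ as $\mathcal{A}$ ranges over $U$.

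For essential surjectivity, given $\mathcal{P}\in\mathfrak{Fa}_{\Omega}$ I would build $\mathcal{S}_{\mathcal{P}}$ with one object $\mathcal{A}_{U}$ per $U\in\Omega$ having underlying Fréchet algebra $\mathcal{P}(U)$, partial order $\mathcal{A}_{U}\leq\mathcal{A}_{V}\Leftrightarrow V\subseteq U$, morphisms given by the restriction maps of $\mathcal{P}$, and base algebra $A=\mathcal{P}(\Omega_{\max})$ at the minimum. This yields a unital complete lattice $A$-category in which the basic opens $U_{\mathcal{A}_{W}}$ are in canonical bijection with elements $W\in\Omega$, realising the identification $(\mathcal{S}_{\mathcal{P}})_{\tau}=\Omega$ required by membership in $\mathfrak{C}_{\Omega}$. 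Then $\mathcal{S}_{\mathcal{P}}^{\tau}(W)=\wedge U_{\mathcal{A}_{W}}=\mathcal{A}_{W}=\mathcal{P}(W)$ by construction, so $\Phi\Psi\cong\operatorname{id}$; the converse $\Psi\Phi\cong\operatorname{id}$ follows because $\mathcal{S}$ can be recovered up to isomorphism as the family of algebras $\wedge U$ indexed by the basic opens $U\in\mathcal{S}_{\tau}$.

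The hard part will be reconciling the intrinsic Alexandrov open-set lattice of the poset $\Omega^{\operatorname{op}}$ underlying $\mathcal{S}_{\mathcal{P}}$ with the declared lattice $\Omega$ itself. A priori the Alexandrov opens of $\Omega^{\operatorname{op}}$ are all down-sets of $\Omega$, which properly contain $\Omega$ in general; hence the identification $(\mathcal{S}_{\mathcal{P}})_{\tau}=\Omega$ must be realised via the canonical bijection $U_{\mathcal{A}_{W}}\leftrightarrow W$ of principal opens under the strong-morphism convention of $\mathfrak{C}_{\Omega}$. Carefully checking that this bijection is order-preserving, functorial in presheaf morphisms, and compatible with the meet operation $\wedge U$ used to define $\mathcal{S}^{\tau}$, and that non-principal up-sets are either absent from the relevant structure or collapse onto their principal hulls in a way compatible with $\mathcal{P}$, constitutes the main technical burden of the proof.
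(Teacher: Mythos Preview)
Your approach is essentially the paper's: both construct an explicit quasi-inverse $\mathcal{P}\mapsto\widehat{\mathcal{P}}$ (your $\mathcal{S}_{\mathcal{P}}$) and verify that the two round-trips recover the input. The paper does not argue full faithfulness separately but simply checks $\widehat{\mathcal{S}^{\tau}}\cong\mathcal{S}$ and $(\widehat{\mathcal{P}})^{\tau}\cong\mathcal{P}$; your explicit treatment of fullness and faithfulness via the components $\alpha_{U_{\mathcal{A}}}$ is a clean addition and is correct.

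The difficulty you isolate in your final paragraph is real, and the paper does not resolve it either: it simply asserts that ``an open subset $U'\subseteq\widehat{\mathcal{P}}$ corresponds to some $U\in\Omega$, that is, it consists of those $\mathcal{P}(V)$ with $V\subseteq U$''. As you note, the Alexandrov opens of the poset $\Omega^{\operatorname{op}}$ are \emph{all} down-sets of $\Omega$, and these are not all principal. For instance, if $\Omega$ is the open-set lattice of a two-point discrete space, the down-set $\Omega\setminus\{\text{top}\}$ is a non-principal Alexandrov open in $\widehat{\mathcal{P}}$ with no corresponding element of $\Omega$. So the identification $(\widehat{\mathcal{P}})_{\tau}=\Omega$ fails literally, and hence $\widehat{\mathcal{P}}\notin\mathfrak{C}_{\Omega}$ as that category is defined.

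What rescues the situation---and what your ``collapse onto principal hulls'' remark is pointing towards---is that for any Alexandrov open $U'$ one has $\wedge U'=\mathcal{P}\bigl(\bigcup\{V:\mathcal{P}(V)\in U'\}\bigr)$, so the presheaf $(\widehat{\mathcal{P}})^{\tau}$ on the larger down-set lattice factors through the surjection onto $\Omega$ and restricts to $\mathcal{P}$ on principal opens. Thus the equivalence holds once one either (i) reads $\mathfrak{C}_{\Omega}$ as carrying a \emph{specified} identification of principal opens with $\Omega$ rather than a literal equality of topologies, or (ii) builds the quasi-inverse on the subposet of $\Omega$ consisting of principal opens of some fixed $\mathcal{S}\in\mathfrak{C}_{\Omega}$. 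Neither you nor the paper makes this explicit, so as written both arguments share the same gap; your proposal is more honest in flagging it.
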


\begin{proof}
By Lemma \ref{lemCes2}, we have the well defined functor $\mathfrak{C}%
_{\Omega}\rightarrow\mathfrak{Fa}_{\Omega}$, $\mathcal{S\mapsto S}^{\tau}$.
Let $\mathcal{P}:\Omega^{\operatorname{op}}\rightarrow\mathfrak{Fa}$ be a
Fr\'{e}chet algebra presheaf on $\Omega$ with the Fr\'{e}chet algebra $A$ of
the global sections. Put $\widehat{\mathcal{P}}$ to be a poset category
defined by the presheaf $\mathcal{P}$, that is, the objects of
$\widehat{\mathcal{P}}$ are the algebras $\mathcal{P}\left(  V\right)  $ over
open subsets $V\in\Omega$ with their unique morphisms $\mathcal{P}\left(
V\right)  \rightarrow\mathcal{P}\left(  W\right)  $ given by the restriction
morphisms of the presheaf $\mathcal{P}$. One can easily verify that
$\mathcal{P}\left(  V\right)  \vee\mathcal{P}\left(  W\right)  =\mathcal{P}%
\left(  V\cap W\right)  $ and $\mathcal{P}\left(  V\right)  \wedge
\mathcal{P}\left(  W\right)  =\mathcal{P}\left(  V\cup W\right)  $ \ Moreover,
every subset $\left\{  \mathcal{P}\left(  V_{i}\right)  \right\}  $ of
$\widehat{\mathcal{P}}$ is bounded below by $A$ and $\wedge\left\{
\mathcal{P}\left(  V_{i}\right)  \right\}  =\mathcal{P}\left(  \cup_{i}%
V_{i}\right)  $. Hence $\widehat{\mathcal{P}}$ is a unital complete lattice
$A$-category. An open subset $U^{\prime}\subseteq\widehat{\mathcal{P}}$
corresponds to some $U\in\Omega$, that is, it consists of those $\mathcal{P}%
\left(  V\right)  $ with $V\subseteq U$. Thus $\widehat{\mathcal{P}}_{\tau}$
consists of $U^{\prime}=\left\{  \mathcal{P}\left(  V\right)  :V\subseteq
U\right\}  $ with $U\in\Omega$. In this case, we have $\wedge U^{\prime
}=\mathcal{P}\left(  U\right)  $ (see Lemma \ref{lemCes1}), and the
correspondence $\mathcal{P}\longrightarrow\widehat{\mathcal{P}}$ is
functorial. Namely, assume that $\mathcal{P}_{1}\rightarrow\mathcal{P}_{2}$ is
a presheaf morphism over $\Omega$, which is a functor transformation
\[
\left(  \mathcal{P}_{1}:\Omega^{\operatorname{op}}\rightarrow\mathfrak{Fa}%
\right)  \rightarrow\left(  \mathcal{P}_{2}:\Omega^{\operatorname{op}%
}\rightarrow\mathfrak{Fa}\right)  .
\]
If $\widehat{\mathcal{P}_{i}}$, $i=1,2$ are the related unital complete
lattice categories, then%
\[
F:\widehat{\mathcal{P}_{1}}\rightarrow\widehat{\mathcal{P}_{2}},\quad
\mathcal{P}_{1}\left(  V\right)  \rightarrow F\left(  \mathcal{P}_{1}\left(
V\right)  \right)  =\mathcal{P}_{2}\left(  V\right)  ,\quad V\in\Omega
\]
is a morphism of these categories. Actually, $F$ is a strong morphism. Indeed,
if $U\in\Omega$ with its open subset representations $U_{i}=\left\{
\mathcal{P}_{i}\left(  V\right)  :V\subseteq U\right\}  $ in
$\widehat{\mathcal{P}_{i}}$, then
\[
F\left(  U_{1}\right)  =F\left\{  \mathcal{P}_{1}\left(  V\right)  :V\subseteq
U\right\}  =\left\{  \mathcal{P}_{2}\left(  V\right)  :V\subseteq U\right\}
=U_{2}.
\]
Thus $F:\left(  \widehat{\mathcal{P}_{1}}\right)  _{\tau}\rightarrow\left(
\widehat{\mathcal{P}_{2}}\right)  _{\tau}$ is a category equivalence of the
related poset categories of open subsets, which means that $F$ is a strong
morphism. Hence we come up with a functor $\mathfrak{Fa}_{\Omega}%
\rightarrow\mathfrak{C}_{\Omega}$.

If $\mathcal{S}$ is a unital complete lattice $A$-category with its
corresponding presheaf $\mathcal{S}^{\tau}$ on $\Omega$, then $\mathcal{S}%
^{\tau}\left(  U_{\mathcal{A}}\right)  =\wedge U_{\mathcal{A}}=\mathcal{A}$
for every object $\mathcal{A}$ of $\mathcal{S}$. Moreover, $\mathcal{A\leq B}$
in $\mathcal{S}$ iff $U_{\mathcal{A}}\supseteq U_{\mathcal{B}}$, which means
that there is a morphism $\mathcal{S}^{\tau}\left(  U_{\mathcal{A}}\right)
\rightarrow\mathcal{S}^{\tau}\left(  U_{\mathcal{B}}\right)  $. Thus the poset
category $\widehat{\mathcal{S}^{\tau}}$ of the presheaf $\mathcal{S}^{\tau}$
is reduced to $\mathcal{S}$ up to an order isomorphism. Conversely, if
$\mathcal{P}$ is a presheaf over $\Omega$ with its corresponding poset
category $\widehat{\mathcal{P}}$, then $\widehat{\mathcal{P}}$ is a unital
complete lattice $A$-category. Let $\left(  \widehat{\mathcal{P}}\right)
^{\tau}$ be the related presheaf on the irreducible topological space
$\widehat{\mathcal{P}}$ (see Lemma \ref{lemCes1}). Then the category of the
open subsets of $\widehat{\mathcal{P}}$ is reduced to $\Omega$, and $\left(
\widehat{\mathcal{P}}\right)  ^{\tau}\left(  U^{\prime}\right)  =\wedge
U^{\prime}=\mathcal{P}\left(  U\right)  $ for an open subset $U^{\prime
}\subseteq\widehat{\mathcal{P}}$ corresponding to $U\in\Omega$. Thus the
result follows.
\end{proof}

\subsection{The complex analytic geometries\label{subsecCAGM}}

Now let $A$ be a given Fr\'{e}chet algebra. There are unital complete-lattice
categories over $A$, for example, one can choose the trivial small category
containing just $A$. Among these $A$-categories we introduce that would be a
reasonable model for a (noncommutative) complex analytic geometry of $A$.

\begin{definition}
\label{defAG}A complex analytic geometry of $A$ is called a unital
complete-lattice $A$-category $\mathcal{S}$ such that for every open subset
$U\subseteq\mathcal{S}$ the inclusion $\operatorname{Spec}\left(  \wedge
U\right)  \subseteq U^{\sim}$ holds, where $U^{\sim}$ is the point completion
of $U$.
\end{definition}

Thus $\operatorname{Spec}\left(  \wedge U\right)  =U^{\sim}\cap\mathcal{T}$
for every $U\in\mathcal{S}_{\tau}$ whenever $\mathcal{S}$ is a complex
analytic geometry of $A$, where $\mathcal{T=}\operatorname{Spec}\left(
A\right)  $. Indeed, if $\mathbb{C}\left(  \lambda\right)  \in U^{\sim}%
\cap\mathcal{T}$ then there exists a morphism $\mathcal{B}\rightarrow
\mathbb{C}\left(  \lambda\right)  $ for some object $\mathcal{B}$ of $U$. The
compose morphism $\wedge U\rightarrow\mathcal{B}\rightarrow\mathbb{C}\left(
\lambda\right)  $ implies that $\mathbb{C}\left(  \lambda\right)
\in\operatorname{Spec}\left(  \wedge U\right)  $, that is, $U^{\sim}%
\cap\mathcal{T\subseteq}\operatorname{Spec}\left(  \wedge U\right)  $ holds.

It means that the poset category of all open subsets in $\mathcal{T}$ induced
from $\mathcal{S}_{\tau}$ consists of $\left\{  \operatorname{Spec}\left(
\mathcal{A}\right)  :\mathcal{A\in S}\right\}  $. More precisely, the functor
\[
\mathcal{S}_{\tau}\rightarrow\left(  \mathcal{S}^{\sim}\cap\mathcal{T}\right)
_{\tau},\quad U\mapsto U^{\sim}\cap\mathcal{T}%
\]
is an epimorphism of the poset categories.

\begin{lemma}
\label{lemAG1}Let $A$ be a Fr\'{e}chet algebra. A complex analytic geometry of
$A$ is equivalent to the presence of a Fr\'{e}chet algebra presheaf
$\mathcal{P}$ on a topological space $\omega$ containing $\operatorname{Spec}%
\left(  A\right)  $ such that $A=\Gamma\left(  \omega,\mathcal{P}\right)  $
and $U\cap\operatorname{Spec}\left(  A\right)  =\operatorname{Spec}\left(
\mathcal{P}\left(  U\right)  \right)  $ for every open subset $U\subseteq
\omega$.
\end{lemma}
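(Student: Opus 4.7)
The plan is to establish the two implications of the equivalence by leveraging Proposition \ref{corCes1}, which already identifies unital complete-lattice $A$-categories with Fr\'{e}chet algebra presheaves. What remains is to match the analytic-geometry condition of Definition \ref{defAG} against the spectral compatibility condition $U\cap\operatorname{Spec}(A)=\operatorname{Spec}(\mathcal{P}(U))$.

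For the backward direction, given $(\omega,\mathcal{P})$ with the stated properties, form $\widehat{\mathcal{P}}$ as in Proposition \ref{corCes1}. It is a unital complete-lattice $A$-category with $A=\mathcal{P}(\omega)$. Every open $U'\subseteq\widehat{\mathcal{P}}$ has the form $U'=\{\mathcal{P}(W):W\subseteq W_{0}\}$ for a unique open $W_{0}\subseteq\omega$, and $\wedge U'=\mathcal{P}(W_{0})$. By hypothesis $\operatorname{Spec}(\mathcal{P}(W_{0}))=W_{0}\cap\operatorname{Spec}(A)$, and each $\lambda$ in this set yields a morphism $\mathcal{P}(W_{0})\rightarrow\mathbb{C}(\lambda)$; since $\mathcal{P}(W_{0})\in U'$, this places $\mathbb{C}(\lambda)$ in $(U')^{\sim}$, verifying $\operatorname{Spec}(\wedge U')\subseteq(U')^{\sim}$.

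For the forward direction, given a complex analytic geometry $\mathcal{S}$, take $\omega=\operatorname{Spec}(A)$ equipped with the topology whose open subsets are exactly those of the form $U_{0}^{\sim}\cap\mathcal{T}$ for open $U_{0}\subseteq\mathcal{S}$, equivalently the sets $\bigcup_{\mathcal{A}\in U_{0}}\operatorname{Spec}(\mathcal{A})$; this is well defined as a topology since finite intersections and arbitrary unions of such subsets stay of this form. For each open $V\subseteq\omega$, set $U(V)=\{\mathcal{A}\in\mathcal{S}:\operatorname{Spec}(\mathcal{A})\subseteq V\}$, which is open in $\mathcal{S}$ because any morphism $\mathcal{A}\rightarrow\mathcal{B}$ forces $\operatorname{Spec}(\mathcal{B})\subseteq\operatorname{Spec}(\mathcal{A})$. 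Define $\mathcal{P}(V)=\wedge U(V)$. The analytic geometry condition and the identity $\operatorname{Spec}(\wedge U(V))=U(V)^{\sim}\cap\mathcal{T}=\bigcup_{\mathcal{A}\in U(V)}\operatorname{Spec}(\mathcal{A})$ noted after Definition \ref{defAG}, together with the fact that $V$ already has the form $\bigcup_{\mathcal{A}\in U_{0}}\operatorname{Spec}(\mathcal{A})$ for some $U_{0}\subseteq U(V)$, yield the desired equality $\operatorname{Spec}(\mathcal{P}(V))=V$. Since $U(\omega)=\mathcal{S}$ and $A$ is the least object, $\mathcal{P}(\omega)=\wedge\mathcal{S}=A$; the presheaf property follows from $V_{1}\supseteq V_{2}$ implying $U(V_{1})\supseteq U(V_{2})$, hence $\wedge U(V_{1})\leq\wedge U(V_{2})$.

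The principal obstacle lies in the forward direction: one must choose the topology on $\omega$ delicately enough that $V$ is recoverable as $\bigcup_{\mathcal{A}\in U(V)}\operatorname{Spec}(\mathcal{A})$, since without this the inclusion $\operatorname{Spec}(\mathcal{P}(V))\subseteq V$ provided by Definition \ref{defAG} alone would not be an equality. This is precisely the role of defining the topology on $\omega$ through the spectra of the objects of $\mathcal{S}$ rather than through any a priori topology on $\operatorname{Spec}(A)$.
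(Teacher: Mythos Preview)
Your backward direction agrees with the paper's and is fine. The forward direction, however, diverges from the paper's and contains a genuine gap.

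The paper takes $\omega=\mathcal{S}^{\sim}$, the point completion itself, equipped with its Alexandrov topology, and sets $\mathcal{P}(U^{\sim})=\wedge U$. You instead choose $\omega=\operatorname{Spec}(A)$ and declare the open sets to be exactly the sets $U_{0}^{\sim}\cap\mathcal{T}=\bigcup_{\mathcal{A}\in U_{0}}\operatorname{Spec}(\mathcal{A})$ for $U_{0}\in\mathcal{S}_{\tau}$. Your assertion that this family is closed under finite intersection is not justified, and it is false in general. Concretely, take $A=\mathbb{C}^{3}$ with characters $\lambda_{1},\lambda_{2},\lambda_{3}$; let $\mathcal{A}$ be the quotient with $\operatorname{Spec}(\mathcal{A})=\{\lambda_{1},\lambda_{2}\}$, let $\mathcal{B}$ be the quotient with $\operatorname{Spec}(\mathcal{B})=\{\lambda_{2},\lambda_{3}\}$, and set $\mathcal{S}=\{A,\mathcal{A},\mathcal{B},\{0\}\}$ with the obvious restriction morphisms. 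This is a unital complete-lattice $A$-category, and one checks directly that $\operatorname{Spec}(\wedge U)\subseteq U^{\sim}$ for every open $U$, so it is a complex analytic geometry in the sense of Definition~\ref{defAG}. Yet your proposed open sets are $\varnothing$, $\{\lambda_{1},\lambda_{2}\}$, $\{\lambda_{2},\lambda_{3}\}$, and $\{\lambda_{1},\lambda_{2},\lambda_{3}\}$, and the intersection $\{\lambda_{1},\lambda_{2}\}\cap\{\lambda_{2},\lambda_{3}\}=\{\lambda_{2}\}$ is not in the list. The obstruction is that $\lambda\in\operatorname{Spec}(\mathcal{A})\cap\operatorname{Spec}(\mathcal{B})$ does not force $\lambda\in\operatorname{Spec}(\mathcal{A}\vee\mathcal{B})$: the join $\mathcal{A}\vee\mathcal{B}$ is computed in $\mathcal{S}$, not in $\mathcal{S}^{\sim}$, and the trivial module $\mathbb{C}(\lambda)$ is not available as an upper bound there.

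Passing to the topology \emph{generated} by these sets does not repair the argument either: for the new open set $V=\{\lambda_{2}\}$ one has $U(V)=\{\{0\}\}$, hence $\mathcal{P}(V)=\{0\}$ and $\operatorname{Spec}(\mathcal{P}(V))=\varnothing\neq V$. The statement of the lemma permits $\omega$ to be strictly larger than $\operatorname{Spec}(A)$, and this flexibility is exactly what the paper exploits by taking $\omega=\mathcal{S}^{\sim}$; your insistence on $\omega=\operatorname{Spec}(A)$ forces you into a construction that need not yield a topology with the required spectral compatibility.
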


\begin{proof}
First assume that a unital complete-lattice $A$-category $\mathcal{S}$
provides a complex analytic geometry of $A$. By Proposition \ref{corCes1}, the
Fr\'{e}chet algebra presheaf $\mathcal{S}^{\tau}$ that corresponds to
$\mathcal{S}$ can be extended to a Fr\'{e}chet algebra presheaf on the
topological space $\omega=\mathcal{S}^{\sim}$, which contains $\mathcal{T}$ or
$\operatorname{Spec}\left(  A\right)  $. Namely, $\mathcal{P}\left(  U^{\sim
}\right)  =\wedge U=\mathcal{S}^{\tau}\left(  U\right)  $ over the point
completion $U^{\sim}$ of $U\in\mathcal{S}_{\tau}$. One can easily see that
$A=\Gamma\left(  \omega,\mathcal{P}\right)  $. Moreover, $\operatorname{Spec}%
\left(  \mathcal{P}\left(  U^{\sim}\right)  \right)  =\operatorname{Spec}%
\left(  \wedge U\right)  =U^{\sim}\cap\mathcal{T}=U^{\sim}\cap
\operatorname{Spec}\left(  A\right)  $ by virtue of Definition \ref{defAG}.

Conversely, suppose that there exists a Fr\'{e}chet algebra presheaf
$\mathcal{P}$ on a topological space $\omega$ containing $\operatorname{Spec}%
\left(  A\right)  $ such that $A=\Gamma\left(  \omega,\mathcal{P}\right)  $
and $V\cap\operatorname{Spec}\left(  A\right)  =\operatorname{Spec}\left(
\mathcal{P}\left(  V\right)  \right)  $ (see below Remark \ref{remAG1}) for
every open subset $V\subseteq\omega$. By Proposition \ref{corCes1},
$\mathcal{P}$ defines a unital complete-lattice $A$-category
$\widehat{\mathcal{P}}=\left\{  \mathcal{P}\left(  V\right)  :V\subseteq
\omega\right\}  $ with their restriction morphisms. An open subset
$U_{1}\subseteq\widehat{\mathcal{P}}$ corresponds to some open $U\subseteq
\omega$, namely, $U_{1}=\left\{  \mathcal{P}\left(  V\right)  :V\subseteq
U\right\}  $ and $\wedge U_{1}=\mathcal{P}\left(  U\right)  $. If $\lambda\in
U\cap\operatorname{Spec}\left(  A\right)  $ then $\lambda\in V$ for a small
neighborhood $V\subseteq U$, that is, $\lambda\in V\cap\operatorname{Spec}%
\left(  A\right)  $. By assumption, we have $V\cap\operatorname{Spec}\left(
A\right)  \subseteq\operatorname{Spec}\left(  \mathcal{P}\left(  V\right)
\right)  $, that is, there is a morphism $\mathcal{P}\left(  V\right)
\rightarrow\mathbb{C}\left(  \lambda\right)  $ for $V\subseteq U$. It follows
that $\mathbb{C}\left(  \lambda\right)  \in U_{1}^{\sim}\cap\mathcal{T}$. Thus
$U\cap\operatorname{Spec}\left(  A\right)  \subseteq U_{1}^{\sim}%
\cap\mathcal{T}$. Finally,
\[
\operatorname{Spec}\left(  \wedge U_{1}\right)  =\operatorname{Spec}\left(
\mathcal{P}\left(  U\right)  \right)  \subseteq U\cap\operatorname{Spec}%
\left(  A\right)  \subseteq U_{1}^{\sim},
\]
which means (see Definition \ref{defAG}) that $\widehat{\mathcal{P}}$ stands
for a complex analytic geometry of $A$.
\end{proof}

\begin{remark}
\label{remAG1}The equality $U\cap\operatorname{Spec}\left(  A\right)
=\operatorname{Spec}\left(  \mathcal{P}\left(  U\right)  \right)  $ in Lemma
\ref{lemAG1} is treated as the natural continuous extension of a character
$\lambda\in U\cap\operatorname{Spec}\left(  A\right)  $ to $\mathcal{P}\left(
U\right)  $ through the restriction morphism $A\rightarrow\mathcal{P}\left(
U\right)  $.
\end{remark}

Based on Lemma \ref{lemAG1}, we also say that $\left(  \omega,\mathcal{P}%
\right)  $ is a complex analytic geometry of $A$. As a possible candidate for
the underlying topological space $\omega$ one can choose the set of all
irreducible Banach space representations of $A$ containing the set of all
$1$-dimensional representations $\operatorname{Spec}\left(  A\right)  $
equipped with a special (used to be a non-Hausdorff) topology. In the case of
$\omega=\operatorname{Spec}\left(  A\right)  $, we have a Fr\'{e}chet algebra
presheaf $\mathcal{P}$ on $\operatorname{Spec}\left(  A\right)  $ such that
$U=\operatorname{Spec}\left(  \mathcal{P}\left(  U\right)  \right)  $ for
every open subset $U\subseteq\operatorname{Spec}\left(  A\right)  $. In this
case, we say that $\mathcal{P}$ (or corresponding $A$-category
$\widehat{\mathcal{P}}$) is \textit{a standard analytic geometry of }$A$. For
example, if $A$ is an Arens-Michael-Fr\'{e}chet algebra, which is commutative
modulo its Jacobson radical, then $\omega=\operatorname{Spec}\left(  A\right)
$ and a possible pair $\left(  \operatorname{Spec}\left(  A\right)
,\mathcal{P}\right)  $ with $A=\Gamma\left(  \operatorname{Spec}\left(
A\right)  ,\mathcal{P}\right)  $ and $U=\operatorname{Spec}\left(
\mathcal{P}\left(  U\right)  \right)  $, $U\subseteq\operatorname{Spec}\left(
A\right)  $ stands for a standard analytic geometry of $A$. More concrete
examples of algebras $A$ with their standard geometric models are considered below.

If $\widehat{\mathcal{P}}$ is an $A$-category corresponding to a presheaf
$\mathcal{P}$ from Lemma \ref{lemAG1}, then we use the notations
$\sigma\left(  \mathcal{P},X\right)  $, $\sigma_{\operatorname{P}}\left(
\mathcal{P},X\right)  $ and $\sigma\left(  A,X\right)  $ instead of
$\sigma\left(  \widehat{\mathcal{P}},X\right)  $, $\sigma_{\operatorname{P}%
}\left(  \widehat{\mathcal{P}},X\right)  $ and $\sigma\left(  \mathcal{T}%
,X\right)  $ (or $\sigma\left(  \widehat{\mathcal{P}}^{\sim}\cap
\mathcal{T},X\right)  $), respectively.

\begin{proposition}
\label{remPut1}Let $\mathcal{P}$ be a Fr\'{e}chet algebra presheaf on a
topological space $\omega$ with $A=\mathcal{P}\left(  \omega\right)  $, and
let $X$ be a left $A$-module. Then $\operatorname{res}\left(
\widehat{\mathcal{P}},X\right)  $ being an open subset of $\omega$ consists of
those $\gamma\in\omega$ such that there exists an open neighborhood $V$ of
$\gamma$ so that $\mathcal{P}\left(  W\right)  \perp X$ for every open
$W\subseteq V$. If $\left(  \omega,\mathcal{P}\right)  $ is a complex analytic
geometry of $A$, then
\[
\sigma_{\operatorname{P}}\left(  \mathcal{P},X\right)  =\sigma\left(
\mathcal{P},X\right)  \cap\operatorname{Spec}\left(  A\right)
\]
holds. In particular, in the case of a standard analytic geometry $\left(
\operatorname{Spec}\left(  A\right)  ,\mathcal{P}\right)  $ of $A$, the
$\widehat{\mathcal{P}}$-spectrum $\sigma\left(  \mathcal{P},X\right)  $ is
reduced to the Putinar spectrum $\sigma_{\operatorname{P}}\left(
\mathcal{P},X\right)  $.
\end{proposition}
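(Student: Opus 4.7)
The plan is to unfold the definitions of the resolvent set and the Putinar resolvent via the explicit identification of $\widehat{\mathcal{P}}$ given in Proposition~\ref{corCes1}, and then read off each of the three assertions by elementary set manipulations.

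First, I would rewrite the category $\widehat{\mathcal{P}}$ concretely: its objects are the algebras $\mathcal{P}(V)$ for open $V\subseteq\omega$, and a morphism $\mathcal{P}(V)\to\mathcal{P}(W)$ is present (and unique) precisely when $W\subseteq V$. Under the bijection from Proposition~\ref{corCes1} between open subsets $U\subseteq\omega$ and open subcategories of $\widehat{\mathcal{P}}$ (given by $U\mapsto\{\mathcal{P}(V):V\subseteq U\}$), the open subcategory $\operatorname{res}(\widehat{\mathcal{P}},X)$ corresponds to some open $U_{0}\subseteq\omega$. Unfolding Definition~\ref{def11}, $\mathcal{P}(V)\in\operatorname{res}(\widehat{\mathcal{P}},X)$ is equivalent to $\mathcal{P}(W)\perp X$ for every open $W\subseteq V$; taking the union of all such $V$ identifies $U_{0}$ with the set described in the first assertion, which gives the neighborhood characterization.

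Next, invoking the complex analytic geometry hypothesis in the form $\operatorname{Spec}(\mathcal{P}(V))=V\cap\operatorname{Spec}(A)$ supplied by Lemma~\ref{lemAG1}, the Putinar resolvent $\operatorname{res}_{\operatorname{P}}(\mathcal{P},X)=\operatorname{res}(\widehat{\mathcal{P}},X)^{\sim}\cap\mathcal{T}$ becomes the set of $\lambda\in\operatorname{Spec}(A)$ lying in $V\cap\operatorname{Spec}(A)$ for some $V\subseteq U_{0}$, which by openness of $U_{0}$ collapses to $U_{0}\cap\operatorname{Spec}(A)$. Passing to complements inside $\operatorname{Spec}(A)$ and using the identification $\sigma(\mathcal{P},X)=\omega\setminus U_{0}$ immediately yields $\sigma_{\operatorname{P}}(\mathcal{P},X)=\sigma(\mathcal{P},X)\cap\operatorname{Spec}(A)$. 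The final assertion then follows at once by specializing to $\omega=\operatorname{Spec}(A)$, for in that case $\sigma(\mathcal{P},X)\subseteq\operatorname{Spec}(A)$ already.

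The only step requiring genuine care is bookkeeping the order-reversing correspondence between the base space $\omega$ and the poset category $\widehat{\mathcal{P}}$: the restriction arrow $\mathcal{P}(V)\to\mathcal{P}(W)$ exists precisely when $W\subseteq V$, so the Alexandrov least-neighborhood $U_{\mathcal{P}(V)}$ in $\widehat{\mathcal{P}}$ corresponds to the \emph{subsets} of $V$, not its supersets. Once this orientation is fixed at the outset and Lemma~\ref{lemAG1} is available, each of the three claims reduces to a short set-theoretic computation, and I do not anticipate any deeper analytical obstruction.
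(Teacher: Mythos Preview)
Your proposal is correct and follows essentially the same route as the paper's proof: both identify $\operatorname{res}(\widehat{\mathcal{P}},X)$ with an open subset of $\omega$ via Proposition~\ref{corCes1}, unfold Definition~\ref{def11} to obtain the neighborhood description, and then use the analytic-geometry condition $\operatorname{Spec}(\mathcal{P}(V))=V\cap\operatorname{Spec}(A)$ (you cite Lemma~\ref{lemAG1}, the paper cites Definition~\ref{defAG}) to compute $\operatorname{res}_{\operatorname{P}}(\widehat{\mathcal{P}},X)=\operatorname{res}(\widehat{\mathcal{P}},X)\cap\operatorname{Spec}(A)$ before passing to complements. Your explicit warning about the order-reversing correspondence is a welcome addition but does not change the substance of the argument.
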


\begin{proof}
Notice that $\operatorname{res}\left(  \widehat{\mathcal{P}},X\right)  $ is an
open subset of $\widehat{\mathcal{P}}$. Based on Proposition \ref{corCes1}, it
is identified with an open subset of the topological space $\omega$. Namely,
$\mathcal{P}\left(  V\right)  \in\operatorname{res}\left(
\widehat{\mathcal{P}},X\right)  $ iff $\mathcal{P}\left(  W\right)  \perp X$
for every open $W\subseteq V$. Thus $\operatorname{res}\left(
\widehat{\mathcal{P}},X\right)  $ consists of those $\gamma\in\omega$ such
that there exists an open neighborhood $V$ of $\gamma$ so that $\mathcal{P}%
\left(  W\right)  \perp X$ for every open $W\subseteq V$.

If $\widehat{\mathcal{P}}$ stands for an analytic geometry $\left(
\omega,\mathcal{P}\right)  $ of $A$ (see Lemma \ref{lemAG1}), then
$\omega\supseteq\operatorname{Spec}\left(  A\right)  =\mathcal{T}$, and
$\operatorname{res}\left(  \widehat{\mathcal{P}},X\right)  $ is identified
with an open subset of $\omega$. Moreover, $\mathbb{C}\left(  \lambda\right)
\in\operatorname{res}\left(  \widehat{\mathcal{P}},X\right)  ^{\sim}$ iff
there exists a morphism $\mathcal{P}\left(  V\right)  \rightarrow
\mathbb{C}\left(  \lambda\right)  $ for some $\mathcal{P}\left(  V\right)
\in\operatorname{res}\left(  \widehat{\mathcal{P}},X\right)  $. In this case,
$\lambda\in V$ (see Definition \ref{defAG}), and $\mathcal{P}\left(  W\right)
\perp X$ for every open $W\subseteq V$, which means that $\lambda
\in\operatorname{res}\left(  \widehat{\mathcal{P}},X\right)  \cap
\operatorname{Spec}\left(  A\right)  $. Therefore
\[
\operatorname{res}_{\operatorname{P}}\left(  \widehat{\mathcal{P}},X\right)
=\operatorname{res}\left(  \widehat{\mathcal{P}},X\right)  ^{\sim}%
\cap\mathcal{T=}\operatorname{res}\left(  \widehat{\mathcal{P}},X\right)
\cap\operatorname{Spec}\left(  A\right)
\]
by its very definition (see Subsection \ref{subsecSSA}). Thus we have (see
Definition \ref{def12})
\begin{align*}
\sigma_{\operatorname{P}}\left(  \mathcal{P},X\right)   &  =\sigma
_{\operatorname{P}}\left(  \widehat{\mathcal{P}},X\right)  =\left(
\widehat{\mathcal{P}}^{\sim}\cap\mathcal{T}\right)  \backslash
\operatorname{res}_{\operatorname{P}}\left(  \widehat{\mathcal{P}},X\right)
=\mathcal{T}\backslash\operatorname{res}_{\operatorname{P}}\left(
\widehat{\mathcal{P}},X\right) \\
&  =\operatorname{Spec}\left(  A\right)  \backslash\left(  \operatorname{res}%
\left(  \widehat{\mathcal{P}},X\right)  \cap\operatorname{Spec}\left(
A\right)  \right)  =\sigma\left(  \widehat{\mathcal{P}},X\right)
\cap\operatorname{Spec}\left(  A\right) \\
&  =\sigma\left(  \mathcal{P},X\right)  \cap\operatorname{Spec}\left(
A\right)  ,
\end{align*}
that is, $\sigma_{\operatorname{P}}\left(  \mathcal{P},X\right)
=\sigma\left(  \mathcal{P},X\right)  \cap\operatorname{Spec}\left(  A\right)
$ holds.
\end{proof}

A point basis $\mathfrak{t}$ for an analytic geometry $\left(  \omega
,\mathcal{P}\right)  $ of $A$ (or for $\widehat{\mathcal{P}}^{\sim}$)
corresponds to a (countable) topology base $\left\{  U_{i}:i\in I\right\}  $
in $\omega$ by Proposition \ref{corCes1}. Therefore, the presence of a point
basis for $\left(  \omega,\mathcal{P}\right)  $ is routine. Moreover, if
$A\rightarrow\mathcal{P}\left(  U_{i}\right)  $, $i\in I$ are localizations,
then $\mathfrak{t}$ is a localizing basis. In this case, we say that
$\mathcal{P}$ is\textit{ a localizing presheaf}.

\begin{corollary}
\label{corM1}Let $A$ be a finite type algebra with its complex analytic
geometry $\left(  \omega,\mathcal{P}\right)  $ such that $\mathcal{P}$ is a
localizing presheaf of nuclear algebras, and let $X$ be a left $A$-module.
Then $\sigma\left(  A,X\right)  \subseteq\sigma_{\operatorname{P}}\left(
\mathcal{P},X\right)  $, and $\sigma_{\operatorname{P}}\left(  \mathcal{P}%
,X\right)  =\sigma\left(  A,X\right)  ^{-}$ in $\omega$ whenever $X$ is a
$\widehat{\mathcal{P}}$-local left $A$-module.
\end{corollary}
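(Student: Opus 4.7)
The strategy is to reduce the corollary to a direct application of Proposition \ref{propLocG} to the unital complete-lattice $A$-category $\widehat{\mathcal{P}}$ arising from the analytic geometry $(\omega,\mathcal{P})$. First I would verify that the hypotheses of Proposition \ref{propLocG} are satisfied: $A$ is of finite type by assumption, $\widehat{\mathcal{P}}$ is a nuclear $A$-category because $\mathcal{P}$ takes values in nuclear Fréchet algebras, and the point completion $\widehat{\mathcal{P}}^{\sim}$ carries a localizing basis. Indeed, a countable topology base $\{U_{i}\}$ of $\omega$ produces, by Proposition \ref{corCes1}, a point basis $\{\mathcal{P}(U_{i})\}$ for $\widehat{\mathcal{P}}^{\sim}$, whose members are local algebras precisely because $\mathcal{P}$ is a localizing presheaf.

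Next I would exploit that $\widehat{\mathcal{P}}$ is unital, so $\widehat{\mathcal{P}}^{\sim}\cap\mathcal{T}=\mathcal{T}=\operatorname{Spec}(A)$ and therefore $\sigma(\widehat{\mathcal{P}}^{\sim}\cap\mathcal{T},X)=\sigma(\mathcal{T},X)=\sigma(A,X)$. The first assertion of Proposition \ref{propLocG} then gives the inclusion
\[
\sigma(A,X)=\sigma(\widehat{\mathcal{P}}^{\sim}\cap\mathcal{T},X)\subseteq\sigma_{\operatorname{P}}(\widehat{\mathcal{P}},X)=\sigma_{\operatorname{P}}(\mathcal{P},X),
\]
which is the first part of the corollary.

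For the second part, assuming $X$ is a $\widehat{\mathcal{P}}$-local left $A$-module, the second assertion of Proposition \ref{propLocG} yields
\[
\sigma_{\operatorname{P}}(\mathcal{P},X)\subseteq\sigma(\widehat{\mathcal{P}}^{\sim}\cap\mathcal{T},X)^{-}=\sigma(A,X)^{-},
\]
the closure being taken in the Alexandrov topology of $\widehat{\mathcal{P}}^{\sim}$ which is identified with $\omega$ via Proposition \ref{corCes1}. The reverse inclusion follows from the first part together with closedness: by Proposition \ref{remPut1} we have $\sigma_{\operatorname{P}}(\mathcal{P},X)=\sigma(\mathcal{P},X)\cap\operatorname{Spec}(A)$, where $\sigma(\mathcal{P},X)$ is closed in $\omega$ as the complement of the open resolvent set $\operatorname{res}(\widehat{\mathcal{P}},X)$. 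Consequently $\sigma_{\operatorname{P}}(\mathcal{P},X)$ is closed within $\operatorname{Spec}(A)$ endowed with the subspace topology, and from $\sigma(A,X)\subseteq\sigma_{\operatorname{P}}(\mathcal{P},X)$ we conclude $\sigma(A,X)^{-}\subseteq\sigma_{\operatorname{P}}(\mathcal{P},X)$, giving the required equality.

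The argument itself is essentially a translation between the $A$-category framework and the presheaf language through Proposition \ref{corCes1}, and all the real work has already been done in Proposition \ref{propLocG} together with Proposition \ref{propDom12} behind it. The main point of care, in my view, is not analytic but topological: one must match the Alexandrov topology on $\widehat{\mathcal{P}}^{\sim}$ with the given topology on $\omega$ consistently, so that the closure $\sigma(A,X)^{-}$ in $\omega$ referenced by the corollary coincides with the closure in $\widehat{\mathcal{P}}^{\sim}$ supplied by Proposition \ref{propLocG}, and so that the closedness of $\sigma_{\operatorname{P}}(\mathcal{P},X)$ in the subspace $\operatorname{Spec}(A)\subseteq\omega$ suffices to conclude the reverse inclusion.
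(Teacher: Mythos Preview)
Your proposal is correct and follows essentially the same route as the paper: apply Proposition~\ref{propLocG} to the nuclear $A$-category $\widehat{\mathcal{P}}$ (with its localizing point basis) for both inclusions, and use Proposition~\ref{remPut1} to see that $\sigma_{\operatorname{P}}(\mathcal{P},X)=\sigma(\mathcal{P},X)\cap\operatorname{Spec}(A)$ is closed in $\operatorname{Spec}(A)$, which turns the two inclusions into an equality in the $\widehat{\mathcal{P}}$-local case. Your extra care in matching the Alexandrov topology of $\widehat{\mathcal{P}}^{\sim}$ with the topology of $\omega$ via Proposition~\ref{corCes1} is exactly the right bookkeeping, and the paper's proof is just a terser version of the same argument.
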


\begin{proof}
One needs to apply Propositions \ref{propLocG} and \ref{remPut1}. Namely,
first we have
\[
\sigma\left(  A,X\right)  =\sigma\left(  \widehat{\mathcal{P}}^{\sim}%
\cap\mathcal{T},X\right)  \subseteq\sigma_{\operatorname{P}}\left(
\widehat{\mathcal{P}},X\right)  =\sigma_{\operatorname{P}}\left(
\mathcal{P},X\right)  .
\]
Further, since $\sigma\left(  \mathcal{P},X\right)  $ is a closed subset of
$\omega$, it follows that $\sigma_{\operatorname{P}}\left(  \mathcal{P}%
,X\right)  =\sigma\left(  \mathcal{P},X\right)  \cap\operatorname{Spec}\left(
A\right)  $ is a closed subset of $\operatorname{Spec}\left(  A\right)  $
equipped with the topology inherited from $\omega$. Hence $\sigma
_{\operatorname{P}}\left(  \mathcal{P},X\right)  =\sigma\left(  A,X\right)
^{-}$ in $\omega$ whenever $X$ is an $\widehat{\mathcal{P}}$-local left $A$-module.
\end{proof}

If $\left(  \omega,\mathcal{P}\right)  $ is an analytic geometry of $A$, and
$U\subseteq\omega$ an open subset, then $\left(  U,\mathcal{P}|_{U}\right)  $
defines an analytic geometry of the algebra $\mathcal{P}\left(  U\right)  $.
In this case, $U\supseteq\operatorname{Spec}\left(  \mathcal{P}\left(
U\right)  \right)  $ holds automatically by Lemma \ref{lemAG1}. Moreover, for
every open subset $V\subseteq U$ we have $V\cap\operatorname{Spec}\left(
\mathcal{P}\left(  U\right)  \right)  =V\cap\operatorname{Spec}\left(
A\right)  \cap\operatorname{Spec}\left(  \mathcal{P}\left(  U\right)  \right)
=\operatorname{Spec}\left(  \mathcal{P}\left(  V\right)  \right)  $.

\begin{corollary}
\label{corM2}Let $A$ be a finite type algebra with its complex analytic
geometry $\left(  \omega,\mathcal{P}\right)  $ such that $\mathcal{P}$ is a
presheaf of nuclear algebras, and let $X$ be a left $A$-module. If $\left(
\mathcal{P}\left(  U\right)  ,\iota_{U}\right)  \gg X$ for an open subset
$U\subseteq\omega$, then
\[
\sigma_{\operatorname{P}}\left(  \mathcal{P},X\right)  |_{\mathcal{P}\left(
U\right)  }=\sigma_{\operatorname{P}}\left(  \mathcal{P}|_{U},X\right)
\quad\text{and}\quad\sigma\left(  A,X\right)  |_{\mathcal{P}\left(  U\right)
}=\sigma\left(  \mathcal{P}\left(  U\right)  ,X\right)
\]
where $\iota_{U}:A\rightarrow\mathcal{P}\left(  U\right)  $ is the restriction
morphism of the presheaf $\mathcal{P}$.
\end{corollary}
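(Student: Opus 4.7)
The plan is to recognize this as a direct application of Theorem \ref{thSMT}, translated through the dictionary between unital complete-lattice $A$-categories and Fr\'{e}chet algebra presheaves provided by Proposition \ref{corCes1}. First I would set $\mathcal{S}=\widehat{\mathcal{P}}$, the unital complete-lattice $A$-category corresponding to $(\omega,\mathcal{P})$. Since $\mathcal{P}$ is a presheaf of nuclear Fr\'{e}chet algebras, $\widehat{\mathcal{P}}$ is a nuclear $A$-category, and any topology base of $\omega$ gives via Proposition \ref{corCes1} a point basis for the point completion $\widehat{\mathcal{P}}^{\sim}$. Thus the ambient hypotheses of Theorem \ref{thSMT} are met.

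Next I would take $\mathcal{A}=\mathcal{P}(U)$ as the distinguished object of $\mathcal{S}$ with canonical morphism $\iota_{U}:A\rightarrow\mathcal{P}(U)$. The hypothesis $(\mathcal{P}(U),\iota_{U})\gg X$ is exactly the dominance condition required in Theorem \ref{thSMT}. The crucial identification is that the least open neighborhood $U_{\mathcal{P}(U)}$ of $\mathcal{P}(U)$ in $\widehat{\mathcal{P}}$ consists of those $\mathcal{P}(V)$ admitting a morphism from $\mathcal{P}(U)$, which in the poset $\widehat{\mathcal{P}}$ means $V\subseteq U$. Hence as an $\mathcal{A}$-category, $U_{\mathcal{P}(U)}$ coincides with $\widehat{\mathcal{P}|_{U}}$, the complete-lattice category of the restricted analytic geometry $(U,\mathcal{P}|_{U})$ of $\mathcal{P}(U)$. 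Similarly $\operatorname{Spec}(\mathcal{P}(U))$ appears as the corresponding category of trivial $\mathcal{P}(U)$-modules inside $U_{\mathcal{P}(U)}^{\sim}$.

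With these identifications in hand I would invoke Theorem \ref{thSMT}, whose second and third equalities yield
\[
\sigma_{\operatorname{P}}(\widehat{\mathcal{P}},X)|_{\mathcal{P}(U)}=\sigma_{\operatorname{P}}(U_{\mathcal{P}(U)},X),\qquad \sigma(\widehat{\mathcal{P}}^{\sim}\cap\mathcal{T},X)|_{\mathcal{P}(U)}=\sigma(\operatorname{Spec}(\mathcal{P}(U)),X).
\]
Finally I would translate both sides back into the presheaf notation introduced after Lemma \ref{lemAG1}: on the left, the first reads $\sigma_{\operatorname{P}}(\mathcal{P},X)|_{\mathcal{P}(U)}$, and since $\widehat{\mathcal{P}}$ is unital (it contains the global sections algebra $A$) we have $\sigma(\widehat{\mathcal{P}}^{\sim}\cap\mathcal{T},X)=\sigma(\mathcal{T},X)=\sigma(A,X)$; on the right, $U_{\mathcal{P}(U)}=\widehat{\mathcal{P}|_{U}}$ gives $\sigma_{\operatorname{P}}(\mathcal{P}|_{U},X)$, and $\operatorname{Spec}(\mathcal{P}(U))$ regarded as the trivial-module category over the base algebra $\mathcal{P}(U)$ gives the Taylor spectrum $\sigma(\mathcal{P}(U),X)$. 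This yields the two claimed equalities. The only delicate point, and the one I would pause to verify carefully, is the identification $U_{\mathcal{P}(U)}=\widehat{\mathcal{P}|_{U}}$ together with the compatibility of $\mathcal{P}(U)$-transversality with $A$-transversality over objects in $U_{\mathcal{P}(U)}$; both are automatic here because the morphisms of $\widehat{\mathcal{P}}$ are precisely the restriction maps of $\mathcal{P}$, and Theorem \ref{thSMT} has already been proved to handle exactly such a base-change.
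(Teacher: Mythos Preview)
Your proposal is correct and follows essentially the same route as the paper, which simply writes ``One needs to apply Proposition \ref{remPut1} and Theorem \ref{thSMT}.'' You have spelled out in detail the dictionary between $(\omega,\mathcal{P})$ and the category $\widehat{\mathcal{P}}$ and the identification $U_{\mathcal{P}(U)}=\widehat{\mathcal{P}|_{U}}$, which is exactly what is needed to translate the conclusions of Theorem \ref{thSMT} into the presheaf language; the role of Proposition \ref{remPut1} in the paper's version is precisely to justify this translation for the Putinar spectrum, and your final paragraph covers the same ground.
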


\begin{proof}
One needs to apply Proposition \ref{remPut1} and Theorem \ref{thSMT}.
\end{proof}

\section{\v{C}ech category and functional calculus}

In this section we propose a functional calculus model within the \v{C}ech
category over a Fr\'{e}chet algebra.

\subsection{A basis for a complete lattice $A$-category\label{subsecTBC}}

A countable family $\mathfrak{b=}\left\{  \mathcal{B}\right\}  $ of the
objects of a complete lattice $A$-category $\mathcal{S}$ is said to be
\textit{a basis for }$\mathcal{S}$ if for every nontrivial open subset
$U\subseteq\mathcal{S}$ bounded below, we have $\wedge U=\wedge\mathfrak{b}%
_{U}$ with $\mathfrak{b}_{U}=U\cap\mathfrak{b}$. If $\mathcal{S}%
=\widehat{\mathcal{P}}$ for a Fr\'{e}chet algebra presheaf $\mathcal{P}%
:\Omega^{\operatorname{op}}\rightarrow\mathfrak{Fa}$, then a countable
topology basis $\mathfrak{b=}\left\{  V_{i}\right\}  $ in $\Omega$ corresponds
to a basis for $\mathcal{S}$ (see Proposition \ref{corCes1}). Namely, if
$U\subseteq\mathcal{S}$ is open then $U=\cup\left\{  V_{i}:V_{i}\subseteq
U\right\}  $ and $\mathfrak{b}_{U}=\left\{  \mathcal{P}\left(  V_{i}\right)
:V_{i}\subseteq U\right\}  $, which in turn implies that%
\[
\wedge U=\mathcal{P}\left(  U\right)  =\mathcal{P}\left(  \cup\left\{
V_{i}:V_{i}\subseteq U\right\}  \right)  =\wedge\left\{  \mathcal{P}\left(
V_{i}\right)  :V_{i}\subseteq U\right\}  =\wedge\mathfrak{b}_{U}.
\]
Thus $\mathfrak{b}$ is a basis for the category $\widehat{\mathcal{P}}$. If
$\left(  \omega,\mathcal{P}\right)  $ is an analytic geometry of $A$ (see
Lemma \ref{lemAG1}), then $\widehat{\mathcal{P}}^{\sim}\cap\mathcal{T}%
=\mathcal{T}=\operatorname{Spec}\left(  A\right)  \subseteq\omega$ and the
open subsets of $\widehat{\mathcal{P}}^{\sim}$ correspond to open subsets of
$\omega$. Hence a basis $\mathfrak{b}$ for $\widehat{\mathcal{P}}$ is just a
point basis for $\widehat{\mathcal{P}}^{\sim}$ (see Subsection \ref{subsecPB}%
), and vice-versa. It is worth to notice that a basis for a complete lattice
$A$-category $\mathcal{S}$ is not a topology basis for the right order
topology of $\mathcal{S}$.

\subsection{Augmented \v{C}ech complex of a basis}

As above let $\mathcal{S}$ be a complete lattice $A$-category with its basis
$\mathfrak{b}$, and $U\subseteq\mathcal{S}$ an open subset. For every tuple
$\mathcal{B}\in\mathfrak{b}_{U}^{p+1}$ with $\mathcal{B}=\left(
\mathcal{B}_{0},\ldots,\mathcal{B}_{p}\right)  $, we define $\vee
\mathcal{B}=\mathcal{B}_{0}\vee\cdots\vee\mathcal{B}_{p}$ to be an element of
the lattice $\mathcal{S}$ (or $U$) with their unique morphisms $\mathcal{B}%
_{j}\rightarrow\vee\mathcal{B}$, $0\leq j\leq p$. For the tuple $\left(
\mathcal{B}_{0},\ldots,\widehat{\mathcal{B}_{j}},\ldots,\mathcal{B}%
_{p}\right)  \in\mathfrak{b}_{U}^{p}$ with the removed term $\mathcal{B}_{j}$
from $\mathcal{B}$, we use the notation $\mathcal{B}\left(  j\right)  $. In
particular, $\mathcal{B}\left(  j,k\right)  $ is a tuple from $\mathfrak{b}%
_{U}^{p-1}$ with the removed two distinct terms $\mathcal{B}_{j}$ and
$\mathcal{B}_{k}$. Actually, $\mathcal{B}\left(  j,k\right)  =\mathcal{B}%
\left(  j\right)  \left(  k\right)  =\mathcal{B}\left(  k\right)  \left(
j\right)  $ for all $j$, $k$. Since $\vee\mathcal{B}\left(  j\right)  \leq
\vee\mathcal{B}$, there is a unique morphism $m_{j}:\vee\mathcal{B}\left(
j\right)  \rightarrow\vee\mathcal{B}$ for each $0\leq j\leq p$. In particular,
there are morphisms $m_{j,k}:\vee\mathcal{B}\left(  j,k\right)  \rightarrow
\vee\mathcal{B}\left(  j\right)  $ and $m_{k,j}:\vee\mathcal{B}\left(
j,k\right)  \rightarrow\vee\mathcal{B}\left(  k\right)  $. Since
$\vee\mathcal{B}\left(  j,k\right)  \leq\vee\mathcal{B}\left(  j\right)
\leq\vee\mathcal{B}$ and $\vee\mathcal{B}\left(  j,k\right)  \leq
\vee\mathcal{B}\left(  k\right)  \leq\vee\mathcal{B}$, it follows that
$m_{j}m_{j,k}=m_{k}m_{k,j}=m_{\left(  j,k\right)  }$, which is the unique
morphism $m_{\left(  j,k\right)  }:\vee\mathcal{B}\left(  j,k\right)
\rightarrow\vee\mathcal{B}$. In particular, $m_{\left(  k,j\right)
}=m_{\left(  j,k\right)  }$ for all $j,k$.

Put $\mathcal{E}^{p}\left(  \mathfrak{b}_{U}\right)  =\prod\left\{
\vee\mathcal{B}:\mathcal{B}\in\mathfrak{b}_{U}^{p+1}\right\}  $, $p\geq0$ to
be the Fr\'{e}chet spaces with the connecting morphisms
\[
\partial_{U}^{p}:\mathcal{E}^{p}\left(  \mathfrak{b}_{U}\right)
\rightarrow\mathcal{E}^{p+1}\left(  \mathfrak{b}_{U}\right)  ,\quad\left(
\partial_{U}^{p}f\right)  \left(  \mathcal{B}\right)  =\sum_{j=0}^{p}\left(
-1\right)  ^{j}m_{j}f\left(  \mathcal{B}\left(  j\right)  \right)  ,
\]
where $f\in\mathcal{E}^{p}\left(  \mathfrak{b}_{U}\right)  $, $f\left(
\mathcal{B}\right)  \in\vee\mathcal{B}$ for every $\mathcal{B}\in
\mathfrak{b}_{U}^{p+1}$. If $f=\partial_{U}^{p-1}g$ for some $g\in
\mathcal{E}^{p-1}\left(  \mathfrak{b}_{U}\right)  $, then
\[
\left(  \partial_{U}^{p}f\right)  \left(  \mathcal{B}\right)  =\sum_{j=0}%
^{p}\left(  -1\right)  ^{j}m_{j}\partial_{U}^{p-1}g\left(  \mathcal{B}\left(
j\right)  \right)  =\sum_{j,k}\left(  -1\right)  ^{j+k}z_{j,k},
\]
where $z_{j,k}=m_{j}m_{j,k}g\left(  \mathcal{B}\left(  j,k\right)  \right)  $.
For every couple $k<j$, the latter sum contains $\left(  -1\right)
^{j+k}z_{j,k}$ and $\left(  -1\right)  ^{j+k-1}z_{k,j}$. But $z_{j,k}%
=m_{\left(  j,k\right)  }g\left(  \mathcal{B}\left(  j,k\right)  \right)
=m_{\left(  k,j\right)  }g\left(  \mathcal{B}\left(  k,j\right)  \right)
=z_{k,j}$, therefore $\left(  \partial_{U}^{p}f\right)  \left(  \mathcal{B}%
\right)  =0$. Thus $\partial_{U}^{p}\partial_{U}^{p-1}=0$, which means that
$\mathcal{E}^{\bullet}\left(  \mathfrak{b}_{U}\right)  =\left\{
\mathcal{E}^{p}\left(  \mathfrak{b}_{U}\right)  ,\partial_{U}^{p}%
:p\geq0\right\}  $ is a cochain complex of the Fr\'{e}chet spaces. Note that
$\mathcal{E}^{0}\left(  \mathfrak{b}_{U}\right)  =\prod\mathfrak{b}_{U}$,
$\wedge U=\wedge\mathfrak{b}_{U}$, and the (unique) morphisms $m_{\mathcal{B}%
}:\wedge U\rightarrow\mathcal{B}$, $\mathcal{B}\in\mathfrak{b}_{U}$ define in
turn the morphism
\[
\varepsilon_{U}:\wedge U\rightarrow\mathcal{E}^{0}\left(  \mathfrak{b}%
_{U}\right)  ,\quad\varepsilon_{U}=\prod\left\{  m_{\mathcal{B}}%
:\mathcal{B}\in\mathfrak{b}_{U}\right\}  ,
\]
that is, $\varepsilon_{U}\left(  a\right)  \left(  \mathcal{B}\right)
=m_{\mathcal{B}}\left(  a\right)  $, $a\in\wedge U$. Since $\mathcal{A}%
_{U}\leq\mathcal{B}_{j}\leq\mathcal{B}_{0}\vee\mathcal{B}_{1}$, $j=0,1$ for
every $2$-tuple $\mathcal{B=}\left(  \mathcal{B}_{0},\mathcal{B}_{1}\right)
\in\mathfrak{b}_{U}^{2}$, it follows that
\[
\partial_{U}^{0}\left(  \varepsilon_{U}\left(  a\right)  \right)  \left(
\left(  \mathcal{B}_{0},\mathcal{B}_{1}\right)  \right)  =m_{0}m_{\mathcal{B}%
_{1}}\left(  a\right)  -m_{1}m_{\mathcal{B}_{0}}\left(  a\right)  =0,\quad
a\in\wedge U.
\]
Thus $\varepsilon_{U}:\wedge U\rightarrow\mathcal{E}^{\bullet}\left(
\mathfrak{b}_{U}\right)  $ is an augmentation of the complex $\mathcal{E}%
^{\bullet}\left(  \mathfrak{b}_{U}\right)  $. The augmented complex
$\mathcal{E}^{\bullet}\left(  \mathfrak{b}_{U}\right)  $ is called the
\v{C}ech complex of the (countable) family $\mathfrak{b}_{U}$. Notice that it
is a complex of the $\wedge U$-bimodules, that is, $\wedge U\rightarrow
\mathcal{E}^{\bullet}\left(  \mathfrak{b}_{U}\right)  $ is an object of the
category $\overline{\wedge U\text{-}\operatorname*{mod}\text{-}\wedge U}$. If
$\mathcal{S}$ is unital then $A=\wedge\mathcal{S=}\wedge\mathfrak{b}$ for
$U=\mathcal{S}$. In this case, $\mathcal{E}^{\bullet}\left(  \mathfrak{b}%
\right)  =\left\{  \mathcal{E}^{p}\left(  \mathfrak{b}\right)  ,\partial
^{p}:p\geq0\right\}  $ is the \v{C}ech complex of the basis $\mathfrak{b}$
with the augmentation $\varepsilon:A\rightarrow\mathcal{E}^{\bullet}\left(
\mathfrak{b}\right)  $, $\varepsilon=\varepsilon_{\mathcal{S}}$. It is an
object of the category $\overline{A\text{-}\operatorname*{mod}\text{-}A}$.

\subsection{The projection morphism}

Let $\mathcal{S}$ be a unital complete lattice $A$-category with its basis
$\mathfrak{b}$, $U\subseteq\mathcal{S}$ an open subset with $\wedge
U=\wedge\mathfrak{b}_{U}$, where $\mathfrak{b}_{U}=U\cap\mathfrak{b}$. Let us
define the projection morphism
\[
\varphi_{p}:\mathcal{E}^{p}\left(  \mathfrak{b}\right)  \longrightarrow
\mathcal{E}^{p}\left(  \mathfrak{b}_{U}\right)  ,\quad\varphi_{p}\left(
f\right)  \left(  \mathcal{B}\right)  =f\left(  \mathcal{B}\right)
,\quad\mathcal{B}\in\mathfrak{b}_{U}^{p+1},
\]
whose kernel is reduced to $\prod\left\{  \vee\mathcal{B}:\mathcal{B}%
\in\mathfrak{k}_{p}\right\}  $, where $\mathfrak{k}_{p}$ consists of those
$\mathcal{B}=\left(  \mathcal{B}_{0},\ldots,\mathcal{B}_{p}\right)
\in\mathfrak{b}^{p+1}$ such that $\mathcal{B}_{i}\in\mathfrak{b-b}_{U}$ for
some $i$. The family $\overline{\varphi}=\left\{  \varphi_{p}\right\}  $
defines the morphism $\mathcal{E}^{\bullet}\left(  \mathfrak{b}\right)
\rightarrow\mathcal{E}^{\bullet}\left(  \mathfrak{b}_{U}\right)  $ of the
\v{C}ech complexes, that is,
\[
\left(  \varphi_{p+1}\left(  \partial^{p}f\right)  \right)  \left(
\mathcal{B}\right)  =\left(  \partial^{p}f\right)  \left(  \mathcal{B}\right)
=\sum_{j=0}^{p}\left(  -1\right)  ^{j}m_{j}f\left(  \mathcal{B}\left(
j\right)  \right)  =\partial_{U}^{p}\left(  \varphi_{p}\left(  f\right)
\right)  \left(  \mathcal{B}\right)
\]
for all $\mathcal{B}\in\mathfrak{b}_{U}^{p+1}$, which means that
$\varphi_{p+1}\left(  \partial^{p}f\right)  =\partial_{U}^{p}\left(
\varphi_{p}\left(  f\right)  \right)  $ for all $f\in\mathcal{E}^{p}\left(
\mathfrak{b}\right)  $. Thus $\overline{\varphi}$ is a morphism of the
category $\overline{A\text{-}\operatorname*{mod}\text{-}A}$. Since
$A=\wedge\mathfrak{b\leq\wedge}\mathfrak{b}_{U}=\wedge U\leq\mathfrak{b}_{U}$,
it follows that the diagram
\[%
\begin{array}
[c]{ccc}%
\mathcal{E}^{0}\left(  \mathfrak{b}\right)  =\prod\mathfrak{b} &
\overset{\varphi_{0}}{\longrightarrow} & \prod\mathfrak{b}_{U}=\mathcal{E}%
^{0}\left(  \mathfrak{b}_{U}\right) \\
_{\varepsilon}\uparrow &  & _{\varepsilon_{U}}\uparrow\\
A & \longrightarrow & \wedge U
\end{array}
\]
commutes. Thus $\overline{\varphi}$ is a morphism of the augmented \v{C}ech
complexes from $\overline{A\text{-}\operatorname*{mod}\text{-}A}$.

Now let $X$ be a finite-free left $A$-module with its free resolution
$\mathcal{R}^{\bullet}=A\widehat{\otimes}\mathfrak{e}^{\bullet}$, where
$\mathfrak{e}^{\bullet}=\left\{  E_{k}:-n\leq k\leq0\right\}  $ is a family of
Fr\'{e}chet spaces. The complex $A$-morphism $\overline{\varphi}$ in turn
induces a Fr\'{e}chet (space) bicomplex epimorphism
\[
\overline{\varphi}\otimes_{A}1:\overline{\mathcal{E}^{\bullet}\left(
\mathfrak{b}\right)  \widehat{\otimes}_{A}\mathcal{R}^{\bullet}}%
\rightarrow\overline{\mathcal{E}^{\bullet}\left(  \mathfrak{b}_{U}\right)
\widehat{\otimes}_{A}\mathcal{R}^{\bullet}}%
\]
with its kernel $\overline{\mathcal{N}}$. The $p$-th column of the bicomplex
$\overline{\mathcal{E}^{\bullet}\left(  \mathfrak{b}\right)  \widehat{\otimes
}_{A}\mathcal{R}^{\bullet}}$ (respectively, $\overline{\mathcal{E}^{\bullet
}\left(  \mathfrak{b}_{U}\right)  \widehat{\otimes}_{A}\mathcal{R}^{\bullet}}%
$) is the complex $\mathcal{E}^{p}\left(  \mathfrak{b}\right)
\widehat{\otimes}_{A}\mathcal{R}^{\bullet}$ (respectively, $\mathcal{E}%
^{p}\left(  \mathfrak{b}_{U}\right)  \widehat{\otimes}_{A}\mathcal{R}%
^{\bullet}$) (see Subsection \ref{subsecDomM}). Note that%
\[
\mathcal{E}^{p}\left(  \mathfrak{b}\right)  \widehat{\otimes}_{A}%
\mathcal{R}^{\bullet}\mathcal{=E}^{p}\left(  \mathfrak{b}\right)
\widehat{\otimes}\mathfrak{e}^{\bullet}\text{,\quad}\mathcal{E}^{p}\left(
\mathfrak{b}_{U}\right)  \widehat{\otimes}_{A}\mathcal{R}^{\bullet
}\mathcal{=E}^{p}\left(  \mathfrak{b}_{U}\right)  \widehat{\otimes
}\mathfrak{e}^{\bullet}.
\]
The action of $\overline{\varphi}\otimes_{A}1$ over $p$-th column is given by
the morphisms%
\[
\varphi_{p}\otimes1_{k}:\mathcal{E}^{p}\left(  \mathfrak{b}\right)
\widehat{\otimes}E_{k}\rightarrow\mathcal{E}^{p}\left(  \mathfrak{b}%
_{U}\right)  \widehat{\otimes}E_{k},\quad-n\leq k\leq0.
\]
Since the sequence
\[
0\rightarrow\prod\left\{  \vee\mathcal{B}:\mathcal{B}\in\mathfrak{k}%
_{p}\right\}  \rightarrow\mathcal{E}^{p}\left(  \mathfrak{b}\right)
\overset{\varphi_{p}}{\longrightarrow}\mathcal{E}^{p}\left(  \mathfrak{b}%
_{U}\right)  \rightarrow0
\]
splits (or admissible), it follows that
\[
\ker\left(  \varphi_{p}\otimes1_{k}\right)  =\prod\left\{  \vee\mathcal{B}%
:\mathcal{B}\in\mathfrak{k}_{p}\right\}  \widehat{\otimes}E_{k}=\prod\left\{
\left(  \vee\mathcal{B}\right)  \widehat{\otimes}E_{k}:\mathcal{B}%
\in\mathfrak{k}_{p}\right\}
\]
(see \cite[2.5.19]{HelHom}). Thus
\[
0\rightarrow\mathcal{N}^{p}\rightarrow\mathcal{E}^{p}\left(  \mathfrak{b}%
\right)  \widehat{\otimes}_{A}\mathcal{R}^{\bullet}\overset{\varphi_{p}%
\otimes_{A}1}{\longrightarrow}\mathcal{E}^{p}\left(  \mathfrak{b}_{U}\right)
\widehat{\otimes}_{A}\mathcal{R}^{\bullet}\rightarrow0
\]
is an exact sequence of the complexes in $\overline{A\text{-}%
\operatorname*{mod}}$, where $\mathcal{N}^{p}$ is the $p$-th column of the
bicomplex $\overline{\mathcal{N}}$. Moreover, using the well known fact
\cite[Ch 2.5, Theorem 5.19]{HelHom} that the projective tensor product and the
direct product are compatible operations, we obtain that
\[
\mathcal{N}^{p}=\prod\left\{  \left(  \vee\mathcal{B}\right)  \widehat{\otimes
}_{A}\mathcal{R}^{\bullet}:\mathcal{B}\in\mathfrak{k}_{p}\right\}
=\prod_{\mathcal{B}\in\mathfrak{k}_{p}}\left(  \vee\mathcal{B}\right)
\widehat{\otimes}\mathfrak{e}^{\bullet}%
\]
is the direct product of the complexes $\left(  \vee\mathcal{B}\right)
\widehat{\otimes}\mathfrak{e}^{\bullet}$, $\mathcal{B}\in\mathfrak{k}_{p}$.

\begin{lemma}
\label{td1}Let $\mathcal{S}$ be a unital complete lattice $A$-category with a
basis $\mathfrak{b}$, $X$ a finite-free $A$-module, and let $U\subseteq
\mathcal{S}$ be an open neighborhood of the spectrum $\sigma\left(
\mathcal{S},X\right)  $. If $\left(  \mathcal{E}^{\bullet}\left(
\mathfrak{b}\right)  ,\varepsilon\right)  \gg X$ then $\left(  \mathcal{E}%
^{\bullet}\left(  \mathfrak{b}_{U}\right)  ,\varepsilon\right)  \gg X$.
\end{lemma}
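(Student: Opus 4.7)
My plan is to exploit the short exact sequence of bicomplexes
\[
0\rightarrow\overline{\mathcal{N}}\rightarrow\overline{\mathcal{E}^{\bullet}\left(\mathfrak{b}\right)\widehat{\otimes}_{A}\mathcal{R}^{\bullet}}\overset{\overline{\varphi}\otimes_{A}1}{\longrightarrow}\overline{\mathcal{E}^{\bullet}\left(\mathfrak{b}_{U}\right)\widehat{\otimes}_{A}\mathcal{R}^{\bullet}}\rightarrow0
\]
constructed just above the statement. Passing to total complexes produces a long exact sequence whose middle term has cohomology $\operatorname{Tor}_{A}^{\bullet}\left(\mathcal{E}^{\bullet}\left(\mathfrak{b}\right),X\right)$ and whose right term has cohomology $\operatorname{Tor}_{A}^{\bullet}\left(\mathcal{E}^{\bullet}\left(\mathfrak{b}_{U}\right),X\right)$. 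It therefore suffices to show that the total complex of $\overline{\mathcal{N}}$ is acyclic, together with the commutative square relating $\varepsilon$ and $\varepsilon_{U}$, to transfer the dominance property from $\mathfrak{b}$ to $\mathfrak{b}_{U}$.

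The heart of the argument is a column-wise vanishing for $\overline{\mathcal{N}}$. Recall that the $p$-th column is $\mathcal{N}^{p}=\prod\left\{\left(\vee\mathcal{B}\right)\widehat{\otimes}_{A}\mathcal{R}^{\bullet}:\mathcal{B}\in\mathfrak{k}_{p}\right\}$, where for every $\mathcal{B}\in\mathfrak{k}_{p}$ at least one component $\mathcal{B}_{i}$ lies in $\mathfrak{b}\setminus\mathfrak{b}_{U}$, hence outside $U$. Since $U\supseteq\sigma\left(\mathcal{S},X\right)$, we have $\mathcal{B}_{i}\in\operatorname{res}\left(\mathcal{S},X\right)$, and the canonical morphism $\mathcal{B}_{i}\rightarrow\vee\mathcal{B}$ of the complete lattice $A$-category $\mathcal{S}$ forces $\vee\mathcal{B}\perp X$ by Definition \ref{def11}. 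Consequently every complex $\left(\vee\mathcal{B}\right)\widehat{\otimes}_{A}\mathcal{R}^{\bullet}$ is admissibly acyclic; a standard splitting argument (each acyclic complex of Fr\'{e}chet spaces decomposes into admissible short exact sequences $0\rightarrow Z^{k}\rightarrow C^{k}\rightarrow Z^{k+1}\rightarrow 0$, and countable direct products of admissible short exact sequences of Fr\'{e}chet spaces remain admissible) shows that $\mathcal{N}^{p}$ is acyclic for each $p\geq 0$. Because $\mathfrak{e}^{\bullet}$ is concentrated in finitely many degrees, the bicomplex $\overline{\mathcal{N}}$ is bounded in the vertical direction, so the column-wise spectral sequence collapses and the total complex is acyclic.

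Feeding this into the long exact sequence associated to the short exact sequence of total complexes yields $\operatorname{Tor}_{A}^{k}\left(\mathcal{E}^{\bullet}\left(\mathfrak{b}\right),X\right)\cong\operatorname{Tor}_{A}^{k}\left(\mathcal{E}^{\bullet}\left(\mathfrak{b}_{U}\right),X\right)$ topologically for every $k$. By the hypothesis $\left(\mathcal{E}^{\bullet}\left(\mathfrak{b}\right),\varepsilon\right)\gg X$ the left side vanishes for $k\neq 0$, giving the required vanishing for $\mathfrak{b}_{U}$. For $k=0$, the commutative square relating $\varepsilon$, $\varepsilon_{U}$ and the morphism $A\rightarrow\wedge U$ (displayed at the end of the projection-morphism subsection) shows that $\varepsilon_{U\ast}$ factors as $\varepsilon_{\ast}$ followed by the isomorphism induced by $\overline{\varphi}\otimes_{A}1$; hence $\varepsilon_{U\ast}:X\rightarrow\operatorname{Tor}_{A}^{0}\left(\mathcal{E}^{\bullet}\left(\mathfrak{b}_{U}\right),X\right)$ is again a topological isomorphism. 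The main technical obstacle I anticipate is the exactness of the countable direct product in $\mathfrak{Fs}$: this must be handled by reducing global acyclicity to admissible short exact sequences (avoiding $\lim^{1}$ issues), after which both the product-exactness and the boundedness of $\mathfrak{e}^{\bullet}$ combine to give the acyclicity of $\operatorname{Tot}\overline{\mathcal{N}}$.
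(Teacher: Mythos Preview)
Your proposal is correct and follows essentially the same route as the paper: both use the short exact sequence of bicomplexes induced by the projection $\overline{\varphi}\otimes_{A}1$, show that each column $\mathcal{N}^{p}$ of the kernel is a product of exact complexes $(\vee\mathcal{B})\widehat{\otimes}_{A}\mathcal{R}^{\bullet}$ (because $\mathcal{B}_{i}\notin U$ forces $\mathcal{B}_{i}\in\operatorname{res}(\mathcal{S},X)$ and hence $\vee\mathcal{B}\perp X$), and then use boundedness in the vertical direction to conclude that the total complex of $\overline{\mathcal{N}}$ is acyclic. The paper wraps the final transfer of dominance into a citation of \cite[Lemma 2.3]{DosJOT10}, whereas you spell out the long exact sequence and the factorization $\varepsilon_{U\ast}=(\text{iso})\circ\varepsilon_{\ast}$ explicitly; your extra care about product-exactness via admissible splittings is a legitimate refinement of a step the paper simply asserts.
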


\begin{proof}
The projection morphism $\overline{\varphi}$ generates the exact sequence of
the bicomplexes%
\begin{equation}
0\rightarrow\overline{\mathcal{N}}\rightarrow\overline{\mathcal{E}^{\bullet
}\left(  \mathfrak{b}\right)  \widehat{\otimes}_{A}\mathcal{R}^{\bullet}%
}\rightarrow\overline{\mathcal{E}^{\bullet}\left(  \mathfrak{b}_{U}\right)
\widehat{\otimes}_{A}\mathcal{R}^{\bullet}}\rightarrow0\text{,} \label{fcb*}%
\end{equation}
where $\mathcal{R}^{\bullet}=A\widehat{\otimes}\mathfrak{e}^{\bullet}$ is a
finite free resolution of $X$. As we have seen above the $p$-th column
$\mathcal{N}^{p}$ of the kernel $\overline{\mathcal{N}}$ is $\prod\left\{
\left(  \vee\mathcal{B}\right)  \widehat{\otimes}\mathfrak{e}^{\bullet
}:\mathcal{B}\in\mathfrak{k}_{p}\right\}  $. If $\mathcal{B}\in\mathfrak{k}%
_{p}$ then $\mathcal{B}=\left(  \mathcal{B}_{0},\ldots,\mathcal{B}_{p}\right)
\in\mathfrak{b}^{p+1}$ such that $\mathcal{B}_{i}\in\mathfrak{b-b}_{U}$ for
some $i$. In particular, $\mathcal{B}_{i}\notin U$, therefore $\mathcal{B}%
_{i}\in\operatorname{res}\left(  \mathcal{S},X\right)  $. Since $\mathcal{B}%
_{i}\leq\vee\mathcal{B}$, there is a (unique) morphism $\mathcal{B}%
_{i}\rightarrow\vee\mathcal{B}$. It follows that $\vee\mathcal{B}\perp X$
holds, which means that $\left(  \vee\mathcal{B}\right)  \widehat{\otimes
}\mathfrak{e}^{\bullet}=\left(  \vee\mathcal{B}\right)  \widehat{\otimes}%
_{A}\mathcal{R}^{\bullet}$ is exact. Thus $\mathcal{N}^{p}$ is the direct
product of the exact complexes $\left(  \vee\mathcal{B}\right)
\widehat{\otimes}\mathfrak{e}^{\bullet}$, $\mathcal{B}\in\mathfrak{k}_{p}$,
therefore it is exact. The bicomplex $\overline{\mathcal{N}}$ with its exact
columns has the exact total complex $\mathcal{N}$. Thus (\ref{fcb*}) is an
exact sequence of bicomplexes such that the total complex of $\overline
{\mathcal{N}}$ is exact. Therefore, $\left(  \mathcal{E}^{\bullet}\left(
\mathfrak{b}\right)  ,\varepsilon\right)  \gg X$ (see Subsection
\ref{subsecDomM}) implies $\left(  \mathcal{E}^{\bullet}\left(  \mathfrak{b}%
_{U}\right)  ,\varepsilon_{U}\right)  \gg X$ by virtue of \cite[Lemma
2.3]{DosJOT10}.
\end{proof}

\subsection{\v{C}ech $A$-category}

Let $\mathcal{S}$ be a unital complete lattice $A$-category with a basis
$\mathfrak{b}$. Then we come up with the augmented \v{C}ech complex
\begin{equation}
0\rightarrow A\overset{\varepsilon}{\longrightarrow}\mathcal{E}^{\bullet
}\left(  \mathfrak{b}\right)  \label{ACc}%
\end{equation}
of the basis $\mathfrak{b}$. Let us introduce the following categories.

\begin{definition}
\label{defCech}A unital complete lattice $A$-category $\mathcal{S}$ is called
\textit{a \v{C}ech }$A$-\textit{category if (\ref{ACc}) is exact for a certain
}basis $\mathfrak{b}$ for $\mathcal{S}$.
\end{definition}

If $\mathcal{P}$ is a Fr\'{e}chet algebra presheaf corresponding (see
Proposition \ref{corCes1}) to a \v{C}ech $A$-category $\mathcal{S}$, then
$\mathcal{P}$ has properties which make it closer to a sheaf rather than just
a presheaf. Namely, if an element of the algebra $A$ being a global section of
the presheaf $\mathcal{P}$ vanishing over all open subsets from $\mathfrak{b}%
$, then it is zero. Moreover, a compatible over $\mathfrak{b}$ family of
sections of $\mathcal{P}$ defines a unique global section from $A$.

If $\mathfrak{b}$ consists of nuclear algebras additionally, then we say that
$\mathcal{S}$ is a\textit{ \v{C}ech }$A$\textit{-category with a nuclear
basis. }Since (\ref{ACc}) is exact and $\mathcal{E}^{0}\left(  \mathfrak{b}%
\right)  =\prod\mathfrak{b}$, it follows that $A$ is nuclear automatically
whenever $\mathfrak{b}$ is a nuclear basis. As above take a finite free
resolution $\mathcal{R}^{\bullet}\mathcal{=}A\widehat{\otimes}\mathfrak{e}%
^{\bullet}$ of a left $A$-module $X$. The augmentation $\overline{\varepsilon
}:A\rightarrow\mathcal{E}^{\bullet}\left(  \mathfrak{b}\right)  $ being a
morphism of the category $\overline{A\text{-}\operatorname*{mod}\text{-}A}$,
defines the morphism
\begin{equation}
\overline{\varepsilon}\otimes_{A}1:\overline{A\widehat{\otimes}_{A}%
\mathcal{R}^{\bullet}}\longrightarrow\overline{\mathcal{E}^{\bullet}\left(
\mathfrak{b}\right)  \widehat{\otimes}_{A}\mathcal{R}^{\bullet}} \label{Epst1}%
\end{equation}
of the bicomplexes. The $k$-th row $\mathcal{E}^{\bullet}\left(
\mathfrak{b}\right)  \widehat{\otimes}_{A}\left(  A\widehat{\otimes}%
E_{k}\right)  $ of the bicomplex $\overline{\mathcal{E}^{\bullet}\left(
\mathfrak{b}\right)  \widehat{\otimes}_{A}\mathcal{R}^{\bullet}}$ is reduced
to $\mathcal{E}^{\bullet}\left(  \mathfrak{b}\right)  \widehat{\otimes}E_{k}$,
whereas the related row of the first bicomplex $\overline{A\widehat{\otimes
}_{A}\mathcal{R}^{\bullet}}$ (see Subsection \ref{subsecDomM}) is
$0\rightarrow A\widehat{\otimes}E_{k}\rightarrow0\rightarrow0\rightarrow
\cdots$ (the first bicomplex has only one nontrivial column $\mathcal{R}%
^{\bullet}$). Thus the action of the morphism $\overline{\varepsilon}%
\otimes_{A}1$ over $k$-th row is given by $\varepsilon\otimes
1:A\widehat{\otimes}E_{k}\rightarrow\mathcal{E}^{\bullet}\left(
\mathfrak{b}\right)  \widehat{\otimes}E_{k}$.

\begin{lemma}
\label{td2}Let $\mathcal{S}$ be a \v{C}ech $A$-category with a nuclear basis
$\mathfrak{b}$, and let $X$ be a finite-free left $A$-module. Then $\left(
\mathcal{E}^{\bullet}\left(  \mathfrak{b}\right)  ,\varepsilon\right)  \gg X$ holds.
\end{lemma}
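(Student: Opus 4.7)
The plan is to show that the bicomplex morphism $\overline{\varepsilon}\otimes_A 1$ in (\ref{Epst1}) induces a quasi-isomorphism of total complexes. Once that is in hand, combining it with the identification $\overline{A\widehat{\otimes}_A\mathcal{R}^\bullet}=\mathcal{R}^\bullet$ and with $\mathcal{R}^\bullet\to X\to 0$ being admissible (which together give $\operatorname{Tor}_A^0(A,X)=X$ and $\operatorname{Tor}_A^k(A,X)=\{0\}$ for $k\neq 0$) yields $\operatorname{Tor}_A^k(\mathcal{E}^\bullet(\mathfrak{b}),X)=\{0\}$ for $k\neq 0$ together with a topological isomorphism $\varepsilon_\ast:X\to\operatorname{Tor}_A^0(\mathcal{E}^\bullet(\mathfrak{b}),X)$, which is exactly the assertion $(\mathcal{E}^\bullet(\mathfrak{b}),\varepsilon)\gg X$.

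Fixing a finite free resolution $\mathcal{R}^\bullet=A\widehat{\otimes}\mathfrak{e}^\bullet$ of $X$, the $(p,q)$-entry of $\overline{\mathcal{E}^\bullet(\mathfrak{b})\widehat{\otimes}_A\mathcal{R}^\bullet}$ collapses to $\mathcal{E}^p(\mathfrak{b})\widehat{\otimes}E_q$, and the morphism $\overline{\varepsilon}\otimes_A 1$ acts on the $q$-th row as $\varepsilon\otimes 1_{E_q}:A\widehat{\otimes}E_q\to\mathcal{E}^\bullet(\mathfrak{b})\widehat{\otimes}E_q$. By the bicomplex lemma already employed in the proof of Lemma \ref{td1} (\cite[Lemma 2.3]{DosJOT10}), it therefore suffices to verify that each augmented row $0\to A\widehat{\otimes}E_q\to\mathcal{E}^\bullet(\mathfrak{b})\widehat{\otimes}E_q$ is exact.

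To establish this rowwise exactness, I would combine the \v{C}ech hypothesis (Definition \ref{defCech}), which supplies exactness of $0\to A\to\mathcal{E}^\bullet(\mathfrak{b})$, with the nuclearity of $\mathfrak{b}$: each $\mathcal{E}^p(\mathfrak{b})=\prod\{\vee\mathcal{B}:\mathcal{B}\in\mathfrak{b}^{p+1}\}$ is a countable product of nuclear Fr\'{e}chet algebras, hence a nuclear Fr\'{e}chet space, and $A$ is nuclear for the same reason, as noted in the text following Definition \ref{defCech}. Splitting the exact complex into short exact pieces $0\to Z^p\to\mathcal{E}^p(\mathfrak{b})\to Z^{p+1}\to 0$, where each cocycle subspace $Z^p$ is closed in the nuclear Fr\'{e}chet space $\mathcal{E}^p(\mathfrak{b})$ and hence itself nuclear Fr\'{e}chet (the quotient map being open by the open-mapping theorem), and applying the standard preservation of exactness under $\widehat{\otimes}$ by a nuclear Fr\'{e}chet factor (the same Taylor-type principle from \cite[Proposition 1.6]{Tay2} exploited in Subsection \ref{subsecTL}), these short sequences stay exact after tensoring with the arbitrary Fr\'{e}chet space $E_q$; splicing back recovers exactness of the augmented row. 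The principal obstacle is exactly this preservation step: the $E_q$ are not assumed nuclear, so exactness is delivered only by flatness of the nuclear Fr\'{e}chet left factor, and without the nuclear-basis hypothesis $\widehat{\otimes}$ would merely be right exact and the bicomplex argument would break down. The topological character of $\varepsilon_\ast$ is then inherited automatically from the open-mapping theorem in $\mathfrak{Fs}$.
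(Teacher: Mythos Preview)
Your proof is correct and follows essentially the same route as the paper: both arguments reduce to the rowwise exactness of $0\to A\widehat{\otimes}E_k\to\mathcal{E}^\bullet(\mathfrak{b})\widehat{\otimes}E_k$ via nuclearity of the augmented \v{C}ech complex, and then invoke the bicomplex lemma \cite[Lemma 2.3]{DosJOT10} (the paper does this by forming the quotient bicomplex $\overline{\mathcal{M}}$ with exact rows and hence exact total complex). The only cosmetic difference is that the paper cites \cite[A1.6 (d)]{EP} directly for the tensor-exactness step, whereas you spell it out by splicing short exact sequences and appealing to \cite[Proposition 1.6]{Tay2}; both are standard and equivalent.
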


\begin{proof}
By assumption, $0\rightarrow A\overset{\varepsilon}{\longrightarrow
}\mathcal{E}^{\bullet}\left(  \mathfrak{b}\right)  $ is an exact complex of
the nuclear Fr\'{e}chet algebras. It follows that the complex%
\[
0\rightarrow A\widehat{\otimes}E_{k}\overset{\varepsilon\otimes
1}{\longrightarrow}\mathcal{E}^{\bullet}\left(  \mathfrak{b}\right)
\widehat{\otimes}E_{k}%
\]
remains exact \cite[A1.6 (d)]{EP} for every $k$. Thus the morphism
$\overline{\varepsilon}\otimes_{A}1$ from (\ref{Epst1}) is an embedding, and
put $\overline{\mathcal{M}}=\overline{\mathcal{E}^{\bullet}\left(
\mathfrak{b}\right)  \widehat{\otimes}_{A}\mathcal{R}^{\bullet}}%
/\operatorname{im}\left(  \overline{\varepsilon}\otimes_{A}1\right)  $ to be
the quotient bicomplex. The $k$-th row of $\overline{\mathcal{M}}$ is the
following exact complex
\[
\mathcal{E}^{\bullet}\left(  \mathfrak{b}\right)  \widehat{\otimes}%
E_{k}/\operatorname{im}\left(  \overline{\varepsilon}\otimes1\right)
:0\rightarrow\mathcal{E}^{0}\left(  \mathfrak{b}\right)  \widehat{\otimes
}E_{k}/\operatorname{im}\left(  \varepsilon\otimes1\right)  \rightarrow
\mathcal{E}^{1}\left(  \mathfrak{b}\right)  \widehat{\otimes}E_{k}%
\rightarrow\mathcal{E}^{2}\left(  \mathfrak{b}\right)  \widehat{\otimes}%
E_{k}\rightarrow\cdots\text{.}%
\]
The bicomplex $\overline{\mathcal{M}}$ with its exact rows has the exact total
complex. Thus
\[
0\rightarrow\overline{A\widehat{\otimes}_{A}\mathcal{R}^{\bullet}}%
\rightarrow\overline{\mathcal{E}^{\bullet}\left(  \mathfrak{b}\right)
\widehat{\otimes}_{A}\mathcal{R}^{\bullet}}\rightarrow\overline{\mathcal{M}%
}\rightarrow0
\]
is an exact sequence of the bicomplexes with the exact total complex of
$\overline{\mathcal{M}}$. Taking into account that $\left(  A,\iota
_{A}\right)  \gg X$ (see Subsection \ref{subsecDomM}), we conclude that
$\left(  \mathcal{E}^{\bullet}\left(  \mathfrak{b}\right)  ,\varepsilon
\right)  \gg X$ holds \cite[Lemma 2.3]{DosJOT10} too.
\end{proof}

Now assume that $\mathcal{S}$ is a unital complete lattice $A$-category with a
basis $\mathfrak{b}$, which corresponds to a Fr\'{e}chet algebra presheaf
$\mathcal{P}:\Omega^{\operatorname{op}}\rightarrow\mathfrak{Fa}$ on a
topological space $\omega$ (see Proposition \ref{corCes1}), that is,
$\mathcal{S}=\widehat{\mathcal{P}}$. Thus $\Omega$ is the category of all open
subsets of $\omega$, $\mathfrak{b}=\left\{  V_{i}:i\in I\right\}  $ is a
countable base for the topological space $\omega$, and $\mathcal{P}\left(
V_{i_{0}}\right)  \vee\ldots\vee\mathcal{P}\left(  V_{i_{p}}\right)
=\mathcal{P}\left(  V_{\alpha}\right)  $, where $V_{\alpha}=V_{i_{0}}%
\cap\ldots\cap V_{i_{p}}$ and $\alpha=\left(  i_{0},\ldots,i_{p}\right)  \in
I^{p+1}$. Moreover, $\mathcal{E}^{p}\left(  \mathfrak{b}\right)
=\prod\left\{  \mathcal{P}\left(  V_{\alpha}\right)  :\alpha\in I^{p+1}%
\right\}  $, $p\geq0$, and the related augmented \v{C}ech complex (\ref{ACc})
is just the \v{C}ech complex of the covering $\mathfrak{b}$. If $\mathcal{P}$
is a sheaf, then the cohomology groups $H^{p}\left(  \mathfrak{b}%
,\mathcal{P}\right)  $, $p\geq0$\ of the \v{C}ech complex are linked to the
sheaf cohomology groups $H^{p}\left(  \omega,\mathcal{P}\right)  $, $p\geq0$
of the topological space $\omega$. Namely, there are natural morphisms
$H^{p}\left(  \mathfrak{b},\mathcal{P}\right)  \rightarrow H^{p}\left(
\omega,\mathcal{P}\right)  $, $p\geq0$ functorial in $\mathcal{P}$ (see
\cite[4.4.4]{Harts}). The well known theorem of Larey \cite[Exercise
4.4.11]{Harts} asserts that these cohomology morphisms are isomorphisms indeed
whenever all finite intersections $V_{\alpha}$ of $\mathfrak{b}$ are
$\mathcal{P}$-acyclic, that is, $H^{p}\left(  V_{\alpha},\mathcal{P}%
|V_{\alpha}\right)  =\left\{  0\right\}  $, $p\geq1$.

\begin{proposition}
\label{propAcyclic}Let $\mathcal{F}:\Omega^{\operatorname{op}}\rightarrow
\mathfrak{Fa}$ be a Fr\'{e}chet algebra sheaf on a topological space $\omega$
with its countable topology base $\mathfrak{b}$. Suppose that all finite
intersections of $\mathfrak{b}$ are $\mathcal{F}$-acyclic. Then
$\widehat{\mathcal{F}}$ is \textit{a \v{C}ech }$A$-\textit{category} with the
basis $\mathfrak{b}$ if and only if $\omega$ is $\mathcal{F}$-acyclic.
\end{proposition}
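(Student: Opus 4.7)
The plan is to reduce the claim to two standard facts: (a) the sheaf axioms are equivalent to exactness of the augmented \v{C}ech complex in degrees $-1$ and $0$, and (b) Leray's theorem, which is explicitly invoked in the discussion preceding the statement, identifies higher \v{C}ech cohomology of a ``good'' cover with sheaf cohomology. Once these are in place, the equivalence is almost immediate.

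First I would observe that because $\mathcal{F}$ is a \emph{sheaf} (and not just a presheaf), the sections $a\in A=\mathcal{F}(\omega)$ are determined by their restrictions to any open cover, and a compatible family of local sections glues uniquely to a global one. Applied to the cover $\mathfrak{b}$, these separation and gluing axioms translate precisely to: the augmentation $\varepsilon:A\to\mathcal{E}^{0}(\mathfrak{b})=\prod_{i}\mathcal{F}(V_{i})$ is injective, and $\operatorname{im}\varepsilon=\ker\partial^{0}$. Thus, independently of any acyclicity hypothesis, the complex $0\to A\xrightarrow{\varepsilon}\mathcal{E}^{\bullet}(\mathfrak{b})$ is automatically exact in degrees $-1$ and $0$. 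Consequently, by Definition~\ref{defCech}, $\widehat{\mathcal{F}}$ is a \v{C}ech $A$-category with basis $\mathfrak{b}$ if and only if the cohomology $H^{p}(\mathfrak{b},\mathcal{F})$ of the (non-augmented) \v{C}ech complex vanishes for all $p\geq 1$.

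Next I would invoke Leray's theorem in the form quoted in the text: since every finite intersection $V_{\alpha}=V_{i_{0}}\cap\cdots\cap V_{i_{p}}$ lies in $\mathfrak{b}$ and is $\mathcal{F}$-acyclic by hypothesis, the natural morphisms
\[
H^{p}(\mathfrak{b},\mathcal{F})\longrightarrow H^{p}(\omega,\mathcal{F}),\qquad p\geq 0,
\]
are isomorphisms. Combining this with the reduction above, the augmented \v{C}ech complex is exact in all positive degrees if and only if $H^{p}(\omega,\mathcal{F})=\{0\}$ for every $p\geq 1$, that is, if and only if $\omega$ itself is $\mathcal{F}$-acyclic. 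This establishes both directions of the equivalence.

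I do not expect a serious obstacle: the needed Leray isomorphism is cited verbatim in the paragraph preceding the proposition, and the degree $0$ exactness is a textbook consequence of the sheaf axioms. The only point requiring care is to remark that the sheaf-theoretic arguments (separation, gluing, and Leray) go through unchanged for sheaves valued in $\mathfrak{Fa}$, because they are formulated pointwise on sections and the underlying abelian group structure, hence the Fr\'{e}chet algebra structure plays no role in the homological bookkeeping.
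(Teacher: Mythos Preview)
Your proposal is correct and follows essentially the same route as the paper, which simply invokes Leray's theorem to identify $H^{p}(\mathfrak{b},\mathcal{F})\cong H^{p}(\omega,\mathcal{F})$ for all $p\geq 0$, notes $H^{0}=A$, and concludes; you merely spell out the degree~$-1$ and~$0$ sheaf-axiom argument that the paper leaves implicit in the identification $H^{0}(\mathfrak{b},\mathcal{F})=A$. One minor slip: finite intersections $V_{\alpha}$ of elements of $\mathfrak{b}$ need not themselves lie in $\mathfrak{b}$, but this is harmless since Leray only requires their $\mathcal{F}$-acyclicity, which is precisely the stated hypothesis.
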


\begin{proof}
By Theorem of Larey, we have $H^{p}\left(  \mathfrak{b},\mathcal{F}\right)
=H^{p}\left(  \omega,\mathcal{F}\right)  $, $p\geq0$ up to the natural
isomorphims. Moreover, $H^{0}\left(  \mathfrak{b},\mathcal{F}\right)
=H^{0}\left(  \omega,\mathcal{F}\right)  =\Gamma\left(  \omega,\mathcal{F}%
\right)  =A$. Hence the \v{C}ech complex (\ref{ACc}) of $\mathfrak{b}$ is
exact iff $H^{p}\left(  \omega,\mathcal{F}\right)  =0$, $p\geq1$, which means
that $\omega$ is $\mathcal{F}$-acyclic.
\end{proof}

Finally, notify that not every \v{C}ech $A$-category corresponds to a
Fr\'{e}chet algebra sheaf. For example, if $\mathcal{S}$ is a\textit{
}\v{C}ech $A$-category with a nuclear basis corresponding to a Fr\'{e}chet
algebra sheaf $\mathcal{F}:\Omega^{\operatorname{op}}\rightarrow\mathfrak{Fa}$
on a topological space $\omega$ (see Proposition \ref{propAcyclic}), then
(\ref{ACc}) is an exact complex of nuclear Fr\'{e}chet spaces. If
$\mathcal{P}:\Omega^{\operatorname{op}}\rightarrow\mathfrak{Fa}$ is a
Fr\'{e}chet algebra presheaf (on the same topological space $\omega$), which
is the constant $C$ over the basis $\mathfrak{b}$, then $\mathcal{F}%
\widehat{\otimes}\mathcal{P}:\Omega^{\operatorname{op}}\rightarrow
\mathcal{\mathfrak{Fa}\ }$is a Fr\'{e}chet algebra presheaf (which is not a
sheaf in general) and $\prod\left\{  \left(  \mathcal{F}\widehat{\otimes
}\mathcal{P}\right)  \left(  V_{\alpha}\right)  :\alpha\in I^{n+1}\right\}
=\prod\left\{  \mathcal{F}\left(  V_{\alpha}\right)  \widehat{\otimes}%
C:\alpha\in I^{n+1}\right\}  =\mathcal{E}^{n}\left(  \mathfrak{b}\right)
\widehat{\otimes}C$ for all $n$. Since the complex $\mathcal{E}^{\bullet
}\left(  \mathfrak{b}\right)  \widehat{\otimes}C$ remains exact, we conclude
that the presheaf $\mathcal{F}\widehat{\otimes}\mathcal{P}$ defines a\textit{
}\v{C}ech $A$-category.

\subsection{The functional calculus}

Now we can prove the following functional calculus theorem for a finite-free
left $A$-module with respect to a \v{C}ech $A$-category with a nuclear basis.

\begin{theorem}
\label{thcoreFC}Let $\mathcal{S}$ be a \v{C}ech $A$-category with a nuclear
basis, $X$ a finite-free left $A$-module and let $U$ be an open neighborhood
of the spectrum $\sigma\left(  \mathcal{S},X\right)  $ in $\mathcal{S}$. Then
$X$ turns out to be a left $\wedge U$-module extending its $A$-module
structure through the morphism $A\rightarrow\wedge U$.
\end{theorem}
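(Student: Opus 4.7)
The plan is to chain together the two auxiliary lemmas with the dominance-based functional calculus theorem (Theorem \ref{tHTf}), using $\mathcal{B} = \wedge U$ as the target algebra. All pieces needed are already developed in the previous subsections, so the proof should be short and largely assembly.

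First, since $\mathcal{S}$ is a \v{C}ech $A$-category with nuclear basis $\mathfrak{b}$, Lemma \ref{td2} applies and yields the global dominance
\[
(\mathcal{E}^{\bullet}(\mathfrak{b}),\varepsilon) \gg X.
\]
Next, because $U$ is an open neighborhood of $\sigma(\mathcal{S},X)$, every $\mathcal{B}\in\mathfrak{b}\setminus\mathfrak{b}_U$ lies in $\operatorname{res}(\mathcal{S},X)$, so Lemma \ref{td1} applies and upgrades this to the localized dominance
\[
(\mathcal{E}^{\bullet}(\mathfrak{b}_U),\varepsilon_U) \gg X,
\]
where $\varepsilon_U:\wedge U \rightarrow \mathcal{E}^{0}(\mathfrak{b}_U)$ is the augmentation produced by the unique morphisms $\wedge U \rightarrow \mathcal{B}$ for $\mathcal{B}\in\mathfrak{b}_U$.

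The final step is to feed this dominance into Theorem \ref{tHTf} with $\mathcal{B}:=\wedge U$ and $\iota:A\rightarrow \wedge U$ the canonical morphism of the $A$-category. For this, I need to verify the categorical hypotheses of Theorem \ref{tHTf}, namely that $\mathcal{E}^{\bullet}(\mathfrak{b}_U)$ belongs to $\overline{\wedge U\text{-}\operatorname{mod}\text{-}A}$ and that $\overline{\varepsilon_U}:A\rightarrow \mathcal{E}^{\bullet}(\mathfrak{b}_U)$ is a morphism of left $A$-modules. Both are immediate from the bimodule construction of the \v{C}ech complex: by its definition each $\vee\mathcal{B}$ is a $\wedge U$-bimodule, hence $\mathcal{E}^{\bullet}(\mathfrak{b}_U)$ is an object of $\overline{\wedge U\text{-}\operatorname{mod}\text{-}\wedge U}$, and the composition with $\iota:A\rightarrow \wedge U$ supplies the required left $A$-structure as well as making $\overline{\varepsilon_U}$ a left $A$-module morphism (it factors as $A \rightarrow \wedge U \overset{\varepsilon_U}{\rightarrow} \mathcal{E}^{0}(\mathfrak{b}_U)$). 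Theorem \ref{tHTf} then produces a left $\wedge U$-module structure on $X$ whose underlying $A$-action via $\iota$ is the original one, finishing the proof. Finite-projectivity of $X$ required by Theorem \ref{tHTf} is supplied by the hypothesis that $X$ is finite-free.

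I do not expect a serious obstacle: the technical work has already been packaged into Lemmas \ref{td1} and \ref{td2}. The only point that needs a moment of attention is the bimodule bookkeeping ensuring that Theorem \ref{tHTf} actually applies with $\mathcal{B}=\wedge U$; in particular one should remark that the left $\wedge U$-module structure that Theorem \ref{tHTf} transports onto $X$ via the topological isomorphism $(\varepsilon_U)_{\ast}:X \rightarrow \operatorname{Tor}_{A}^{0}(\mathcal{E}^{\bullet}(\mathfrak{b}_U),X)$ is compatible with the given $A$-action precisely because $\overline{\varepsilon_U}$ is a morphism of left $A$-modules, and this is exactly what ``extending through the morphism $A\rightarrow \wedge U$'' means in the statement.
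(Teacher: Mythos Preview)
Your proof is correct and follows exactly the same three-step assembly as the paper: apply Lemma \ref{td2} to obtain $(\mathcal{E}^{\bullet}(\mathfrak{b}),\varepsilon)\gg X$, then Lemma \ref{td1} to obtain $(\mathcal{E}^{\bullet}(\mathfrak{b}_U),\varepsilon_U)\gg X$, and finally invoke Theorem \ref{tHTf} with $\mathcal{B}=\wedge U$. Your verification of the bimodule hypotheses for Theorem \ref{tHTf} is slightly more explicit than the paper's one-line remark, but the argument is identical.
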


\begin{proof}
By assumption there is a nuclear basis $\mathfrak{b}$ for $\mathcal{S}$ whose
augmented \v{C}ech complex (\ref{ACc}) is exact (see Definition \ref{defCech}%
). By Lemma \ref{td2}, the dominance property $\left(  \mathcal{E}^{\bullet
}\left(  \mathfrak{b}\right)  ,\varepsilon\right)  \gg X$ holds. Since
$\sigma\left(  \mathcal{S},X\right)  \subseteq U$, based on Lemma \ref{td1},
we deduce that $\left(  \mathcal{E}^{\bullet}\left(  \mathfrak{b}_{U}\right)
,\varepsilon_{U}\right)  \gg X$ holds too. But $\mathcal{E}^{\bullet}\left(
\mathfrak{b}_{U}\right)  $ is an object of the category $\overline{\wedge
U\text{-}\operatorname*{mod}\text{-}A}$, and the augmentation $\varepsilon
:A\rightarrow\mathcal{E}^{\bullet}\left(  \mathfrak{b}_{U}\right)  $ is a
morphism of the left $A$-modules. It remains to apply Theorem \ref{tHTf}.
\end{proof}

Thus, if $\mathcal{S}$ is a \v{C}ech $A$-category with a nuclear basis
corresponding to a Fr\'{e}chet algebra presheaf $\left(  \Omega,\mathcal{F}%
\right)  $ on a topological space $\omega$, and $X$ is a finite-free left
$A$-module, then the original $\mathcal{F}\left(  \omega\right)  $-calculus on
$X$ is extended up to a $\mathcal{F}\left(  U\right)  $-calculus on an open
subset $U\subseteq\omega$, whenever $\sigma\left(  \mathcal{S},X\right)
\subseteq U$. More precisely, suppose that $\left(  \omega,\mathcal{F}\right)
$ is a geometric model of $A$ with its nuclear Fr\'{e}chet algebra presheaf
$\mathcal{F}$, and $\omega$ has a countable topology base $\mathfrak{b}%
=\left\{  V_{i}\right\}  $ such that its augmented \v{C}ech complex
(\ref{ACc}) is exact. In this case, the functional calculus property from
Theorem \ref{thcoreFC} holds. In the case of a standard geometric model with
its structure sheaf, we obtain that the following assertion.

\begin{corollary}
\label{corkeyy1}Let $\left(  \operatorname{Spec}\left(  A\right)
,\mathcal{F}\right)  $ be a standard analytic geometry of $A$, whose structure
sheaf $\mathcal{F}$ is a nuclear Fr\'{e}chet algebra sheaf. Suppose
$\operatorname{Spec}\left(  A\right)  $ is $\mathcal{F}$-acyclic and
$\operatorname{Spec}\left(  A\right)  $ has a countable basis $\mathfrak{b}$,
whose all finite intersections are $\mathcal{F}$-acyclic. If $X$ is a
finite-free left $A$-module, and $U$ is an open neighborhood of the Putinar
spectrum $\sigma_{\operatorname{P}}\left(  \mathcal{F},X\right)  $, then $X$
turns out to be a Fr\'{e}chet left $\mathcal{F}\left(  U\right)  $-module
extending its $A$-module structure.
\end{corollary}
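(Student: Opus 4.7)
The plan is to assemble three earlier results---Proposition~\ref{propAcyclic}, Proposition~\ref{remPut1}, and Theorem~\ref{thcoreFC}---and invoke the sheaf-to-category correspondence of Proposition~\ref{corCes1}. First I would verify that $\widehat{\mathcal{F}}$ is a \v{C}ech $A$-category with a nuclear basis $\mathfrak{b}$. Since all finite intersections of the members of $\mathfrak{b}$ are $\mathcal{F}$-acyclic and the whole space $\operatorname{Spec}(A)$ is $\mathcal{F}$-acyclic, Proposition~\ref{propAcyclic} yields exactness of the augmented \v{C}ech complex~(\ref{ACc}) of $\mathfrak{b}$. Nuclearity of each $\mathcal{F}(V_{i})$ is built into the hypothesis that $\mathcal{F}$ is a nuclear Fr\'{e}chet algebra sheaf, so $\mathfrak{b}$ is indeed a nuclear basis.

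Next I would identify the two spectra appearing in the statement. By Proposition~\ref{remPut1}, in the standard case $(\operatorname{Spec}(A),\mathcal{F})$ the category spectrum $\sigma(\mathcal{F},X)$ coincides with the Putinar spectrum $\sigma_{\operatorname{P}}(\mathcal{F},X)$. Hence the hypothesis $\sigma_{\operatorname{P}}(\mathcal{F},X)\subseteq U$ is equivalent to $\sigma(\widehat{\mathcal{F}},X)\subseteq U$, where $U\subseteq\operatorname{Spec}(A)$ corresponds under Proposition~\ref{corCes1} to an open subcategory of $\widehat{\mathcal{F}}$ with infimum $\wedge U=\mathcal{F}(U)$.

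Finally I would apply Theorem~\ref{thcoreFC} to the \v{C}ech $A$-category $\widehat{\mathcal{F}}$ with its nuclear basis $\mathfrak{b}$, the finite-free left $A$-module $X$, and the open neighborhood $U$ of $\sigma(\widehat{\mathcal{F}},X)$. The theorem delivers a left Fr\'{e}chet $\wedge U$-module structure on $X$ extending the original $A$-action through the morphism $A\rightarrow \wedge U$; translating back via $\wedge U=\mathcal{F}(U)$ and $A=\Gamma(\operatorname{Spec}(A),\mathcal{F})$ produces exactly the claimed Fr\'{e}chet $\mathcal{F}(U)$-module structure on $X$ extending the $A$-module structure through the restriction morphism.

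The main obstacle is conceptual rather than computational: all the heavy lifting (exactness of the \v{C}ech complex via Leray's theorem, the dominance property $(\mathcal{E}^{\bullet}(\mathfrak{b}),\varepsilon)\gg X$ via Lemma~\ref{td2}, the localization from $\mathfrak{b}$ to $\mathfrak{b}_{U}$ via Lemma~\ref{td1}, and the lifting of the module action via Theorem~\ref{tHTf}) is already packaged in the cited results. Consequently the corollary reduces to correctly matching hypotheses and notation between the general framework of \v{C}ech $A$-categories and the concrete sheaf-theoretic setting of a standard geometry, with the identification $\sigma(\mathcal{F},X)=\sigma_{\operatorname{P}}(\mathcal{F},X)$ being the single nontrivial bridge.
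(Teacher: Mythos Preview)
Your proposal is correct and follows exactly the paper's approach: the paper's proof is the one-line ``One needs to apply Proposition~\ref{remPut1}, Proposition~\ref{propAcyclic}, and Theorem~\ref{thcoreFC},'' and you have accurately unpacked how these three results combine (together with the dictionary of Proposition~\ref{corCes1}) to yield the conclusion.
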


\begin{proof}
One needs to apply Proposition \ref{remPut1}, Proposition \ref{propAcyclic},
and Theorem \ref{thcoreFC}.
\end{proof}

In the case of an algebra of finite type $A$ we have both the functional
calculus and the spectral mapping theorem for a complex analytic geometry of
$A$.

\begin{theorem}
\label{corkeyy2}Let $A$ be a finite type algebra with its complex analytic
geometry $\left(  \omega,\mathcal{P}\right)  $ such that $\mathcal{P}$ is a
nuclear Fr\'{e}chet algebra presheaf, $\mathfrak{b}=\left\{  V_{i}\right\}  $
a countable topology base of $\omega$ such that its augmented \v{C}ech complex
is exact, \textit{and let }$X$ be a left $A$-module. If $U\subseteq\omega$ is
an open subset containing the spectrum $\sigma\left(  \mathcal{P},X\right)  $,
then $X$ turns out to be a left $\mathcal{A}$-module extending its $A$-module
structure through the restriction morphism $A\rightarrow\mathcal{A}$, where
$\mathcal{A=P}\left(  U\right)  $. Moreover, if $A\rightarrow\mathcal{A}$ is a
localization, then $\mathcal{A}\in U$ and%
\[
\sigma\left(  \mathcal{P},X\right)  =\sigma\left(  \mathcal{P}|_{U},X\right)
,\quad\sigma_{\operatorname{P}}\left(  \mathcal{P},X\right)  =\sigma
_{\operatorname{P}}\left(  \mathcal{P}|_{U},X\right)  ,\quad\sigma\left(
A,X\right)  \cap U=\sigma\left(  \mathcal{P}\left(  U\right)  ,X\right)  .
\]
If $\mathfrak{b}$ is a localizing base additionally, then $\sigma\left(
A,X\right)  =\sigma\left(  \mathcal{P}\left(  U\right)  ,X\right)  $ holds too.
\end{theorem}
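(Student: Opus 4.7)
The plan is to combine the \v{C}ech functional calculus of Theorem \ref{thcoreFC} with the spectral mapping identities of Section \ref{sectionFM}, routing everything through the presheaf/$A$-category correspondence of Section \ref{sectionAFA}.

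First, I would observe that since $A$ is of finite type, the left $A$-module $X$ is automatically finite-free (Subsection \ref{subsecTrans}). The analytic geometry $(\omega,\mathcal{P})$ corresponds via Proposition \ref{corCes1} to a unital complete-lattice $A$-category $\widehat{\mathcal{P}}$, and the countable topology base $\mathfrak{b}=\{V_i\}$ of $\omega$ translates (Subsection \ref{subsecTBC}) into a basis $\{\mathcal{P}(V_i)\}$ of $\widehat{\mathcal{P}}$ whose members are nuclear by hypothesis. The augmented \v{C}ech complex of this basis coincides with (\ref{ACc}) and is exact by hypothesis, so $\widehat{\mathcal{P}}$ is a \v{C}ech $A$-category with a nuclear basis. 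The open subset $U\subseteq\omega$ containing $\sigma(\mathcal{P},X)$ corresponds to an open subcategory of $\widehat{\mathcal{P}}$ with infimum $\wedge U=\mathcal{P}(U)=\mathcal{A}$, so Theorem \ref{thcoreFC} applied directly yields the desired $\mathcal{A}$-module structure on $X$ extending its original $A$-module structure through the restriction morphism $\iota_U:A\to\mathcal{A}$.

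Next, assume $\iota_U$ is a localization. By Lemma \ref{lemDom12} the $\mathcal{A}$-module structure on $X$ just obtained upgrades to the dominance $(\mathcal{A},\iota_U)\gg X$. The membership $\mathcal{A}\in U$, read inside $\widehat{\mathcal{P}}$, is automatic because $\mathcal{A}=\mathcal{P}(U)=\wedge\{\mathcal{P}(V):V\subseteq U\}$ itself lies in the open subcategory corresponding to $U$. With the dominance in place, I would invoke Theorem \ref{thSMT} for $\mathcal{S}=\widehat{\mathcal{P}}$ after identifying $U_{\mathcal{A}}$ with $\widehat{\mathcal{P}|_U}$, to obtain
\[
\sigma(\mathcal{P},X)|_{\mathcal{A}}=\sigma(\mathcal{P}|_U,X),\quad \sigma_{\operatorname{P}}(\mathcal{P},X)|_{\mathcal{A}}=\sigma_{\operatorname{P}}(\mathcal{P}|_U,X),\quad \sigma(A,X)|_{\mathcal{A}}=\sigma(\mathcal{P}(U),X).
\]
Unwinding $|_{\mathcal{A}}$ as intersection with $U$ (respectively with $U\cap\operatorname{Spec}(A)$ in the third line), and using the hypothesis $\sigma(\mathcal{P},X)\subseteq U$ together with the consequent $\sigma_{\operatorname{P}}(\mathcal{P},X)=\sigma(\mathcal{P},X)\cap\operatorname{Spec}(A)\subseteq U$ from Proposition \ref{remPut1}, the intersections in the first two become trivial, while the third reads $\sigma(A,X)\cap U=\sigma(\mathcal{P}(U),X)$.

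For the final assertion, when $\mathfrak{b}$ is additionally a localizing base Corollary \ref{corM1} yields $\sigma(A,X)\subseteq\sigma_{\operatorname{P}}(\mathcal{P},X)\subseteq U$, so $\sigma(A,X)\cap U=\sigma(A,X)$ and the previously established third equality sharpens to $\sigma(A,X)=\sigma(\mathcal{P}(U),X)$. The main obstacle I anticipate is not a mathematical one but one of careful bookkeeping: maintaining the dictionary between the topological-space formulation $(\omega,\mathcal{P})$ and the $A$-category formulation $\widehat{\mathcal{P}}$ under the various restrictions $|_{\mathcal{A}}$, $|_U$, and $\cap\operatorname{Spec}(A)$. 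Once this dictionary is in place the argument assembles from results already at hand and requires no further homological computation.
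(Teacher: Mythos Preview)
Your proposal is correct and follows essentially the same route as the paper: reduce to the \v{C}ech $A$-category $\widehat{\mathcal{P}}$, apply Theorem \ref{thcoreFC} for the functional calculus, then Lemma \ref{lemDom12} plus Theorem \ref{thSMT} for the spectral identities, and finally Corollary \ref{corM1} for the localizing-base refinement. Your justification that $\mathcal{A}\in U$ directly from $\mathcal{A}=\mathcal{P}(U)$ belonging to the open subcategory $\{\mathcal{P}(V):V\subseteq U\}$ is in fact slightly cleaner than the paper's, which instead argues $\mathcal{A}\in\sigma(\mathcal{P},X)\subseteq U$ via the dominance; both lead to $U=U_{\mathcal{A}}$ and the rest proceeds identically.
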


\begin{proof}
First notice that $\left(  \omega,\mathcal{P}\right)  $ defines the nuclear
\v{C}ech $A$-category $\widehat{\mathcal{P}}$ with its basis $\mathfrak{b}%
$\textit{ }(see Subsection \ref{subsecTBC}), and the open set $U$ is
identified with an open subcategory of $\widehat{\mathcal{P}}$ such that
$\wedge U=\mathcal{A}$. Since every left $A$-module if finite-free
automatically, Theorem \ref{thcoreFC} is applicable. Thus $X$ is a left
$\mathcal{A}$-module extending its $A$-module structure through the
restriction morphism $\iota_{U}:A\rightarrow\mathcal{A}$.

Further, assume that $\iota_{U}:A\rightarrow\mathcal{A}$ is a localization.
Based on Lemma \ref{lemDom12}, we deduce that $\left(  \mathcal{A},\iota
_{U}\right)  \gg X$ holds, and the spectral mapping Theorem \ref{thSMT} (see
also Corollary \ref{corM2}) is applicable. In particular, $\mathcal{A}%
\in\sigma\left(  \mathcal{P},X\right)  $. It follows that $\mathcal{A}\in U$
(or $U=U_{\mathcal{A}}$) and
\begin{align*}
\sigma\left(  \mathcal{P},X\right)   &  =\sigma\left(  \mathcal{P},X\right)
\cap U=\sigma\left(  \mathcal{P},X\right)  \cap U_{\mathcal{A}}=\sigma\left(
\mathcal{P},X\right)  |_{\mathcal{A}}=\sigma\left(  U_{\mathcal{A}},X\right)
=\sigma\left(  \mathcal{P}|_{U},X\right)  ,\\
\sigma_{\operatorname{P}}\left(  \mathcal{P},X\right)   &  =\sigma\left(
\mathcal{P},X\right)  \cap\operatorname{Spec}\left(  A\right)  =\sigma\left(
\mathcal{P},X\right)  \cap\operatorname{Spec}\left(  A\right)  \cap
U=\sigma\left(  \mathcal{P},X\right)  \cap\operatorname{Spec}\left(
\mathcal{A}\right) \\
&  =\sigma_{\operatorname{P}}\left(  \mathcal{P},X\right)  \cap
\operatorname{Spec}\left(  \mathcal{A}\right)  =\sigma_{\operatorname{P}%
}\left(  \mathcal{P},X\right)  |_{\mathcal{A}}=\sigma_{\operatorname{P}%
}\left(  U_{\mathcal{A}},X\right)  =\sigma_{\operatorname{P}}\left(
\mathcal{P}|_{U},X\right)  ,\\
\sigma\left(  A,X\right)  \cap U  &  =\sigma\left(  A,X\right)  \cap
\operatorname{Spec}\left(  \mathcal{A}\right)  =\sigma\left(  \mathcal{T}%
,X\right)  |_{\mathcal{A}}=\sigma\left(  \mathcal{P}\cap\mathcal{T},X\right)
|_{\mathcal{A}}\\
&  =\sigma\left(  \operatorname{Spec}\left(  \mathcal{A}\right)  ,X\right)
=\sigma\left(  \mathcal{P}\left(  U\right)  ,X\right)  .
\end{align*}
Finally, if $\mathfrak{b}$ is a localizing base additionally, that is,
$\mathcal{P}$ is a localizing presheaf, then $\sigma\left(  A,X\right)
=\sigma\left(  \mathcal{P}\cap\mathcal{T},X\right)  \subseteq\sigma
_{\operatorname{P}}\left(  \mathcal{P},X\right)  \subseteq\sigma\left(
\mathcal{P},X\right)  \subseteq U$ thanks to Corollary \ref{corM1}. Hence
$\sigma\left(  A,X\right)  =\sigma\left(  A,X\right)  \cap U=\sigma\left(
\mathcal{P}\left(  U\right)  ,X\right)  $.
\end{proof}

\section{The standard analytic geometries of the contractive quantum
plane\label{sectionCQP}}

Recall that a quantum plane $\mathfrak{A}_{q}$ is the quotient of the free
algebra $\mathbb{C}\left\langle x,y\right\rangle $ modulo the identity
$xy=q^{-1}yx$, $\left\vert q\right\vert \leq1$. If $\left\vert q\right\vert
<1$, it is called a contractive quantum plane. In this section we consider the
standard geometric models of some envelopes of $\mathfrak{A}_{q}$ depending on
the classes of its Banach algebra actions.

\subsection{The multinormed envelopes of an algebra\label{subsecMEQ}}

Let $\mathfrak{A}$ be a pure (unital or not) base algebra. A class
$\mathcal{C}$ of algebra homomorphisms $\mathfrak{A}\rightarrow B$ into Banach
algebras $B$ define the related envelope $\widehat{\mathfrak{A}}^{\mathcal{C}%
}$ to be an Arens-Michael algebra with the canonical algebra homomorphisms
$\iota:\mathfrak{A}\rightarrow\widehat{\mathfrak{A}}^{\mathcal{C}}$ such that
$\left(  \widehat{\mathfrak{A}}^{\mathcal{C}},\iota\right)  $ possesses the
following universal projective property. Every homomorphism $\kappa
:\mathfrak{A}\rightarrow B$ from the class can be factorized as $\kappa
=\widehat{\kappa}\iota$ for a unique continuous algebra homomorphism
$\widehat{\kappa}:\widehat{\mathfrak{A}}^{\mathcal{C}}\rightarrow B$.
Actually, every $\kappa:\mathfrak{A}\rightarrow B$ from the class
$\mathcal{C}$ defines a submultiplicative seminorm $p_{\kappa}=\left\Vert
\kappa\left(  \cdot\right)  \right\Vert _{B}$ on $\mathfrak{A}$. The Hausdorff
completion of $\mathfrak{A}$ with respect to the multinormed topology given by
the family $\left\{  p_{\kappa}:\kappa\in\mathcal{C}\right\}  $ results in the
$\mathcal{C}$-envelope $\widehat{\mathfrak{A}}^{\mathcal{C}}$ of
$\mathfrak{A}$ (see \cite{Aris}, \cite{DosiSS}). If $\mathcal{C}$ is the class
of all Banach algebra actions $\mathfrak{A}\rightarrow B$ we obtain the
Arens-Michael envelope $\widehat{\mathfrak{A}}$ of $\mathfrak{A}$.

Let $B$ be a unital Banach algebra $B$. If $B$ has a closed nilpotent ideal
$I$, whose quotient $B/I$ is commutative, we say that $B$ is a \textit{Banach}
\textit{N-algebra. }Every Banach algebra $B$ with the finite-dimensional
Jacobson radical $\operatorname{Rad}B$ and the commutative quotient
$B/\operatorname{Rad}B$ is a Banach N-algebra with its ideal
$I=\operatorname{Rad}B$ (see \cite[1.3.59]{Hel}). Every unital commutative
Banach algebra is a Banach N-algebra. Notice that if $B$ is a Banach N-algebra
with its ideal $I$, then $I$ is a closed ideal of nilpotent elements, and
$I\subseteq\operatorname{Rad}B$. In particular, $B/\operatorname{Rad}B$ is a
unital commutative Banach algebra being a quotient of $B/I$. If $\mathcal{C}$
is the class of all homomorphisms $\mathfrak{A}\rightarrow B$ from a unital
algebra $\mathfrak{A}$ into Banach N-algebras $B$, then we come up with the
N-envelope of $\mathfrak{A}$ denoted by $\widehat{\mathfrak{A}}^{\text{N}}$.

In the case of a non-unital base algebra $\mathfrak{A}$ one can introduce its
RN and RF envelopes of its unitization $\mathfrak{A}_{+}$ \cite{DosiSS}.
Recall that a unital algebra $A$ is called a local algebra if it has a unique
maximal (left or right) ideal, which is just $\operatorname{Rad}A$. In
particular, $A/\operatorname{Rad}A=\mathbb{C}$. If $A$ has no unit and
$A=\operatorname{Rad}A$, then $A$ is called a radical algebra. In this case,
its unitization $A_{+}$ turns out to be a local algebra. By a morphism of the
radical algebras we always mean the local morphism $\varphi_{+}:A_{+}%
\rightarrow B_{+}$ obtained by the unitization of a homomorphism
$\varphi:A\rightarrow B$ of the radical algebras. Let us consider the class
$\mathcal{C}$ of all homomorphisms $\kappa:\mathfrak{A}\rightarrow B$ into
nilpotent (radical) Banach algebras $B$. Actually, $\mathcal{C}$ is the class
of homomorphism $\kappa_{+}:\mathfrak{A}_{+}\rightarrow B_{+}$ into local
Banach algebras $B_{+}$ with their nilpotent radicals, which in turn defines
the RN-envelope $\widehat{\mathfrak{A}_{+}}^{\text{RN}}$ of $\mathfrak{A}_{+}%
$. If we choose $\mathcal{C}$ to be the class of all finite dimensional
radical algebras, then we come up with the RF-envelope $\widehat{\mathfrak{A}%
_{+}}^{\text{RF}}$ of $\mathfrak{A}_{+}$.

In the case of the quantum plane $\mathfrak{A=A}_{q}$, $\left\vert
q\right\vert \leq1$, we have (see \cite{DosiSS})
\begin{equation}
\widehat{\mathfrak{A}_{q}}^{\text{RN}}=\widehat{\mathfrak{A}_{q}}^{\text{RF}%
}=\mathbb{C}_{q}\left[  \left[  x,y\right]  \right]  , \label{Cq}%
\end{equation}
where $\mathfrak{A}_{q}$ is considered as the unitization of the subalgebra
generated by $x$ and $y$, and $\mathbb{C}_{q}\left[  \left[  x,y\right]
\right]  $ is the (local) Arens-Michael-Fr\'{e}chet algebra of all formal
power series in variables $x$ and $y$ equipped with the $q$-multiplication
(see below (\ref{lm1})). In the case of a contractive quantum plane
$\mathfrak{A}_{q}$ the class $\mathcal{C}$ of all homomorphisms $\kappa
:\mathfrak{A}_{q}\rightarrow B$ into unital Banach algebras $B$ with the
nilpotent $\kappa\left(  y\right)  $ (resp., $\kappa\left(  x\right)  $)
defines the related envelope of $\mathfrak{A}_{q}$ denoted by
$\widehat{\mathfrak{A}_{q}}^{y}$ (resp., $\widehat{\mathfrak{A}_{q}}^{x}$).
Actually, $\widehat{\mathfrak{A}_{q}}^{\text{N}}$ is the envelope with respect
to the class of all homomorphisms $\kappa:\mathfrak{A}_{q}\rightarrow B$ with
the nilpotent $\kappa\left(  xy\right)  $ (see \cite{DosiSS}).

\subsection{Noncommutative analytic space $\left(  \mathbb{C}_{xy}%
,\mathcal{O}_{q}\right)  $}

The Arens-Michael envelope $A=\widehat{\mathfrak{A}_{q}}$ of the contractive
quantum plane $\mathfrak{A}_{q}$ turns out to be commutative modulo its
Jacobson radical \cite[Theorem 5.1]{Dosi24}, and the canonical embedding
$\mathfrak{A}_{q}\rightarrow A$ is an absolute localization \cite{Pir}. Taking
into account that $\mathfrak{A}_{q}$ has a finite free $\mathfrak{A}_{q}%
$-bimodule resolution \cite{Tah}, we deduce that the Arens-Michael-Fr\'{e}chet
algebra $A$ is of finite type. In particular, the irreducible Banach
$A$-modules (the spectrum $\mathfrak{X}$ of $A$) are just the trivial
$A$-modules $\mathbb{C}\left(  \gamma\right)  $ given by the continuous
characters $\gamma$ (see \cite[6.2.1]{Hel}), that is, $\mathfrak{X}%
=\operatorname{Spec}A$. Moreover, we have
\[
\mathfrak{X}=\mathbb{C}_{xy}:=\mathbb{C}_{x}\cup\mathbb{C}_{y}\quad
\text{with\quad}\mathbb{C}_{x}=\mathbb{C\times}\left\{  0\right\}
\subseteq\mathbb{C}^{2},\quad\mathbb{C}_{y}=\left\{  0\right\}  \times
\mathbb{C\subseteq C}^{2}%
\]
(see \cite{Dosi24}, \cite{Dosi242}), and $A/\operatorname{Rad}A=\mathcal{O}%
\left(  \mathbb{C}_{xy}\right)  $ is the commutative algebra of all
holomorphic functions on $\mathbb{C}_{xy}$. The standard geometry of $A$ was
revealed in \cite{Dosi24}. Namely, the spectrum $\mathbb{C}_{xy}$ is equipped
with a suitable topology affiliated to the noncommutative multiplication of
$\mathfrak{A}_{q}$ or $A$. Namely, $\mathbb{C}_{x}$ is endowed with the
$\mathfrak{q}$-topology given by the $q$-spiraling open subsets of
$\mathbb{C}$, whereas $\mathbb{C}_{y}$ is equipped with the disk topology
$\mathfrak{d}$ given by all open disks in $\mathbb{C}$ centered at the origin.
Both are non-Hausdorff topologies with their unique generic point zero, and
they are weaker than the original topology of the complex plane. The space
$\mathbb{C}_{xy}$ is equipped with the final topology so that both embedding
\begin{equation}
\left(  \mathbb{C}_{x},\mathfrak{q}\right)  \hookrightarrow\mathbb{C}%
_{xy}\hookleftarrow\left(  \mathbb{C}_{y},\mathfrak{d}\right)  \label{cemd}%
\end{equation}
are continuous, which is called $\left(  \mathfrak{q},\mathfrak{d}\right)
$\textit{-topology of} $\mathbb{C}_{xy}$. It is the union $\mathbb{C}%
_{xy}=\mathbb{C}_{x}\cup\mathbb{C}_{y}$ of two irreducible subsets, whose
intersection is a unique generic point. The direct image of the standard
Fr\'{e}chet sheaf $\mathcal{O}$ of germs of holomorphic functions on
$\mathbb{C}$ along the continuous identity map $\mathbb{C}\rightarrow\left(
\mathbb{C}_{x},\mathfrak{q}\right)  $ is denoted by $\mathcal{O}%
^{\mathfrak{q}}$. In a similar way, we have the sheaf $\mathcal{O}%
^{\mathfrak{d}}$ on $\left(  \mathbb{C}_{y},\mathfrak{d}\right)  $. The
projective tensor product of the ringed spaces $\left(  \mathbb{C}%
_{x},\mathcal{O}^{\mathfrak{q}}\right)  $ and $\left(  \mathbb{C}%
_{y},\mathcal{O}^{\mathfrak{d}}\right)  $ results in the ringed space $\left(
\mathbb{C}_{xy},\mathcal{O}_{q}\right)  $. The underlying space $\mathbb{C}%
_{xy}$ is equipped with the $\left(  \mathfrak{q},\mathfrak{d}\right)
$\textit{-}topology and its structure presheaf is defined as
\[
\mathcal{O}_{q}=\mathcal{O}^{\mathfrak{q}}\widehat{\otimes}\mathcal{O}%
^{\mathfrak{d}},
\]
where both sheaves $\mathcal{O}^{\mathfrak{q}}$ and $\mathcal{O}%
^{\mathfrak{d}}$ are drawn onto $\mathbb{C}_{xy}$ as their direct images along
the canonical embeddings (\ref{cemd}). It turns out that $\mathcal{O}_{q}$ is
a presheaf of noncommutative Fr\'{e}chet algebras equipped with the formal
$q$-multiplication such that $\mathcal{O}_{q}\left(  \mathbb{C}_{xy}\right)
=A$. Moreover, for every open subset $U\subseteq\mathbb{C}_{xy}$, the algebra
$\mathcal{O}_{q}\left(  U\right)  $ is commutative modulo its Jacobson radical
$\operatorname{Rad}\mathcal{O}_{q}\left(  U\right)  $ and $\operatorname{Spec}%
\left(  \mathcal{O}_{q}\left(  U\right)  \right)  =U$. Thus $\left(
\mathbb{C}_{xy},\mathcal{O}_{q}\right)  $ is a standard geometry of the
algebra $A$ (see Lemma \ref{lemAG1} and Definition \ref{defAG}). Thus
$A\rightarrow\mathcal{O}_{q}$ is a unital complete lattice $A$-category.

As a topology base $\mathfrak{b}$ for $\mathbb{C}_{xy}$, we consider the
countable family $\left\{  V_{i}:i\in I\right\}  $ of all Runge $q$-open
subsets (see \cite{DosiLoc}) in $\left(  \mathbb{C}_{x},\mathfrak{q}\right)
$, and the countable family $\left\{  W_{n}:n\in\mathbb{N}\right\}  $ of open
disks in $\left(  \mathbb{C}_{y},\mathfrak{d}\right)  $, where $W_{n}$ is the
open disk in the complex plane centered at the origin and of radius $n$. Then
$\mathfrak{b}=\left\{  U_{i,n}:\left(  i,n\right)  \in I\times\mathbb{N}%
\right\}  $ is a countable topology base for $\mathbb{C}_{xy}$, where
$U_{i,n}=V_{i}\cup W_{n}$.

\begin{proposition}
\label{propCex1}The unital complete lattice $A$-category $\mathcal{O}_{q}$
with the basis $\mathfrak{b}$ is a \v{C}ech $A$-category. If $X$ is a left
Fr\'{e}chet $A$-module, $U\subseteq\mathbb{C}_{xy}$ a open subset containing
$\sigma_{\operatorname{P}}\left(  \mathcal{O}_{q},X\right)  $, then the
$A$-action on $X$ can be lifted to a left $\mathcal{O}_{q}\left(  U\right)
$-module structure on $X$.
\end{proposition}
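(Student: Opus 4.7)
The plan is to prove the two claims in sequence: first, exhibit $\mathcal{O}_q$ with basis $\mathfrak{b}$ as a \v{C}ech $A$-category in the sense of Definition \ref{defCech}, and second, apply the functional calculus Theorem \ref{corkeyy2} to the standard analytic geometry $(\mathbb{C}_{xy}, \mathcal{O}_q)$ of $A$.

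For the \v{C}ech property, the structural starting point is the factorization $\mathcal{O}_q = \mathcal{O}^{\mathfrak{q}} \widehat{\otimes} \mathcal{O}^{\mathfrak{d}}$ together with the product-type indexing of $\mathfrak{b} = \{V_i \cup W_n\}_{(i,n)\in I\times\mathbb{N}}$. Every $(p+1)$-fold intersection takes the form $U_\alpha = V_{\vec{\imath}} \cup W_{\min_k n_k}$, where $V_{\vec{\imath}} = V_{i_0} \cap \cdots \cap V_{i_p}$ remains Runge $\mathfrak{q}$-open, so $\mathcal{O}_q(U_\alpha) = \mathcal{O}(V_{\vec{\imath}}) \widehat{\otimes} \mathcal{O}(W_{\min_k n_k})$ is a projective tensor product of nuclear Fr\'{e}chet algebras (in particular $\mathfrak{b}$ is a nuclear basis automatically). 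Using the compatibility of direct products with $\widehat{\otimes}$ for nuclear Fr\'{e}chet spaces (\cite[Ch~2.5, Theorem~5.19]{HelHom}), one identifies
\[
\mathcal{E}^p(\mathfrak{b}) = \mathcal{E}^p(\{V_i\}) \widehat{\otimes} \mathcal{E}^p(\{W_n\}),
\]
where the two factors are the $p$-th terms of the \v{C}ech complexes of the sheaves $\mathcal{O}^{\mathfrak{q}}$, $\mathcal{O}^{\mathfrak{d}}$ on their respective bases. Since each $V_i$ (resp.\ $W_n$) is a Stein open of $\mathbb{C}$, Cartan's Theorem~B together with Proposition \ref{propAcyclic} gives exactness of the augmented \v{C}ech complexes of $\mathcal{O}^{\mathfrak{q}}$ over $\{V_i\}$ and of $\mathcal{O}^{\mathfrak{d}}$ over $\{W_n\}$. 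An Eilenberg--Zilber/shuffle argument then identifies the diagonally indexed complex $\mathcal{E}^\bullet(\mathfrak{b})$ with the total complex of the bicomplex $\mathcal{E}^\bullet(\{V_i\}) \widehat{\otimes} \mathcal{E}^\bullet(\{W_n\})$, whose augmentation by $A = \mathcal{O}(\mathbb{C}_x) \widehat{\otimes} \mathcal{O}(\mathbb{C}_y)$ is exact because the completed tensor product preserves exactness for complexes of nuclear Fr\'{e}chet spaces (\cite[A1.6]{EP}).

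For the functional calculus, recall that $\mathfrak{A}_q$ admits a finite free bimodule resolution \cite{Tah} and $\mathfrak{A}_q \to A$ is an absolute localization \cite{Pir}; hence $A$ is of finite type (see Subsection \ref{subsecTL}), and every left Fr\'{e}chet $A$-module is finite-free. Because $(\mathbb{C}_{xy}, \mathcal{O}_q)$ is a standard analytic geometry of $A$ (so $\omega = \operatorname{Spec}(A) = \mathbb{C}_{xy}$), Proposition \ref{remPut1} gives $\sigma_{\operatorname{P}}(\mathcal{O}_q, X) = \sigma(\mathcal{O}_q, X)$. The hypothesis $U \supseteq \sigma_{\operatorname{P}}(\mathcal{O}_q, X)$ thus becomes $U \supseteq \sigma(\mathcal{O}_q, X)$, and Theorem \ref{thcoreFC} (in the form of Theorem \ref{corkeyy2}) delivers the required lift of the $A$-action to a left $\mathcal{O}_q(U)$-module structure on $X$ through the restriction morphism $A \to \mathcal{O}_q(U)$.

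The principal obstacle is in the first part, namely the rigorous identification of the diagonally indexed \v{C}ech complex $\mathcal{E}^\bullet(\mathfrak{b})$ of the tensor product presheaf with the total complex of the factor-wise tensor product bicomplex. This requires an Eilenberg--Zilber type quasi-isomorphism carried out inside the category of nuclear Fr\'{e}chet spaces, where care is needed to commute the infinite direct products indexing the cochain groups with $\widehat{\otimes}$, to match the alternating-sum differentials on both sides (up to sign conventions), and to preserve exactness through the completed tensor product. Once that identification is accomplished, the exactness reduces formally to classical Cartan~B for the two factor sheaves, and the rest of the proof is a direct application of the machinery of Sections \ref{sectionFM}--\ref{sectionAFA}.
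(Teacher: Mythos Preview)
Your second part (the functional calculus) is correct and matches the paper's argument exactly: reduce to $\sigma(\mathcal{O}_q,X)$ via Proposition~\ref{remPut1}, note that $A$ is of finite type, and apply Theorem~\ref{thcoreFC}.

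For the first part your route is different from the paper's and unnecessarily hard. You factor $\mathcal{E}^p(\mathfrak{b})\cong\mathcal{E}^p(\{V_i\})\widehat{\otimes}\mathcal{E}^p(\{W_n\})$ and then confront the fact that under this identification the \v{C}ech differential is the \emph{diagonal} one (omitting $(i_j,n_j)$ simultaneously), not the tensor-product differential $d\otimes 1\pm 1\otimes d$; you then propose a cosimplicial Eilenberg--Zilber comparison to reach the total complex. That step is genuinely delicate in the Fr\'echet setting (you need continuous shuffle/Alexander--Whitney maps and homotopies through infinite products), and you correctly flag it as the principal obstacle.

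The paper bypasses this entirely. It observes that for $\gamma=(\alpha,\beta)\in(I\times\mathbb{N})^{p+1}$ one has
\[
\mathcal{O}_q(U_\gamma)=\mathcal{O}(V_\alpha)\widehat{\otimes}\mathcal{O}(W_\beta)=\mathcal{O}(V_\alpha\times W_\beta)=\mathcal{O}(D_\gamma),
\]
where $D_\gamma=V_\alpha\times W_\beta\subseteq\mathbb{C}^2$. Since the restriction maps of $\mathcal{O}_q$ are tensor products of the restriction maps of $\mathcal{O}^{\mathfrak{q}}$ and $\mathcal{O}^{\mathfrak{d}}$, the \v{C}ech complex $\mathcal{E}^\bullet(\mathfrak{b})$ of $\mathcal{O}_q$ on $\mathbb{C}_{xy}$ is \emph{literally equal}, term by term and differential by differential, to the ordinary \v{C}ech complex $\mathcal{E}^\bullet(\mathfrak{b}^\times)$ of the sheaf $\mathcal{O}$ on $\mathbb{C}^2$ for the polydomain covering $\mathfrak{b}^\times=\{V_i\times W_n\}$. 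All $D_\gamma$ and $\mathbb{C}^2$ are Stein, so Proposition~\ref{propAcyclic} gives exactness immediately. No Eilenberg--Zilber, no comparison of diagonal and total complexes---just a change of viewpoint from $\mathbb{C}_{xy}$ to $\mathbb{C}^2$. Your approach could likely be pushed through, but the paper's identification is both simpler and yields an equality of complexes rather than a quasi-isomorphism.
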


\begin{proof}
The augmented \v{C}ech complex (\ref{ACc}) of $\mathcal{O}_{q}$ is just the
\v{C}ech complex of the covering $\mathfrak{b}$. Namely, put $J=I\times
\mathbb{N}$ and every tuple $\gamma\in J^{p+1}$ is an ordered pair $\left(
\alpha,\beta\right)  $ of tuples $\alpha=\left(  i_{0},\ldots,i_{p}\right)
\in I^{p+1}$ and $\beta=\left(  n_{0},\ldots,n_{p}\right)  \in\mathbb{N}%
^{p+1}$. Put $U_{\gamma}=\cap\left\{  U_{i_{k},n_{k}}:0\leq k\leq p\right\}
$, that is, $U_{\gamma}=V_{\alpha}\cup W_{\beta}$ with $V_{\alpha}=V_{i_{0}%
}\cap\ldots\cap V_{i_{p}}$ and $W_{\beta}=W_{n_{0}}\cap\ldots\cap W_{n_{p}}$.
Let us consider the polydomains $D_{i,n}=V_{i}\times W_{n}$, $\left(
i,n\right)  \in J$ in the standard complex space $\mathbb{C}^{2}$. Then
$\mathfrak{b}^{\times}=\left\{  D_{i,n}:\left(  i,n\right)  \in J\right\}  $
is an open covering of $\mathbb{C}^{2}$ with their finite intersections
$D_{\gamma}=\cap\left\{  D_{i_{k},n_{k}}:0\leq k\leq p\right\}  =V_{\alpha
}\times W_{\beta}$, $\gamma=\left(  \alpha,\beta\right)  $, which are
polydomains too. By the proper construction of the presheaf $\mathcal{O}_{q}$,
we have%
\[
\mathcal{O}_{q}\left(  U_{\gamma}\right)  =\mathcal{O}_{q}\left(  V_{\alpha
}\cup W_{\beta}\right)  =\mathcal{O}\left(  V_{\alpha}\right)
\widehat{\otimes}\mathcal{O}\left(  W_{\beta}\right)  =\mathcal{O}\left(
V_{\alpha}\times W_{\beta}\right)  =\mathcal{O}\left(  D_{\gamma}\right)
\]
for all $\gamma=\left(  \alpha,\beta\right)  $, where $\mathcal{O}$ is the
(commutative) sheaf of stalks of holomorphic functions on $\mathbb{C}^{2}$. It
follows that
\[
\mathcal{E}^{p}\left(  \mathfrak{b}\right)  =\prod\left\{  \mathcal{O}%
_{q}\left(  U_{\gamma}\right)  :\gamma\in J^{p+1}\right\}  =\prod\left\{
\mathcal{O}\left(  D_{\gamma}\right)  :\gamma\in J^{p+1}\right\}  ,\quad
p\geq0,
\]
and one can easily verify that the \v{C}ech complex $\mathcal{E}^{\bullet
}\left(  \mathfrak{b}\right)  $ of the covering $\mathfrak{b}$ is reduced to
the standard \v{C}ech complex $\mathcal{E}^{\bullet}\left(  \mathfrak{b}%
^{\times}\right)  $ of the covering $\mathfrak{b}^{\times}$. But every
$D_{\gamma}$ and all space $\mathbb{C}^{2}$ are $\mathcal{O}$-acyclic spaces,
therefore the augmented \v{C}ech complex $0\rightarrow\mathcal{O}\left(
\mathbb{C}^{2}\right)  \longrightarrow\mathcal{E}^{\bullet}\left(
\mathfrak{b}^{\times}\right)  $ of the covering $\mathfrak{b}^{\times}$ is
exact (see Proposition \ref{propAcyclic}). In particular, so is (\ref{ACc})
for the algebra $A$. By Definition \ref{defCech}, $\mathcal{O}_{q}$ is a
\v{C}ech $A$-category.

Finally, let $X$ be a left $A$-module, which is finitely-free automatically
($A$ is of finite type). By Proposition \ref{remPut1}, the $\mathcal{O}_{q}%
$-spectrum $\sigma\left(  \mathcal{O}_{q},X\right)  $ is reduced to the
Putinar spectrum $\sigma_{\operatorname{P}}\left(  \mathcal{O}_{q},X\right)
$. It remains to use Theorem \ref{thcoreFC}.
\end{proof}

Notice that a left Fr\'{e}chet $\mathcal{O}_{q}\left(  \mathbb{C}_{xy}\right)
$-module $X$ is given by a couple $\left(  T,S\right)  $ of continuous linear
operator on $X$ such that $TS=q^{-1}ST$. For brevity we say that $X$ is a left
Fr\'{e}chet $q$-module. If $X$ is a left Banach $\mathfrak{A}_{q}$-module,
then the homomorphism $\mathfrak{A}_{q}\rightarrow\mathcal{B}\left(  X\right)
$, $x\mapsto T$, $y\mapsto S$ into the Banach algebra $\mathcal{B}\left(
X\right)  $ has a unique continuous (algebra homomorphism) extension
$\mathcal{O}_{q}\left(  \mathbb{C}_{xy}\right)  \rightarrow\mathcal{B}\left(
X\right)  $ to its Arens-Michael envelope $\mathcal{O}_{q}\left(
\mathbb{C}_{xy}\right)  $, that is, $X$ is a left Banach $q$-module
automatically. In the case of a left Banach $q$-module $X$, the $\mathcal{O}%
_{q}\left(  U\right)  $-functional calculus $\left(  T,S\right)  $ holds if
and only if $\sigma\left(  T\right)  \subseteq U_{x}$ and $\sigma\left(
S\right)  \subseteq U_{y}$ \cite[Theorem 5.2]{Dosi24}, which does not involve
a joint spectrum. But in the general case of the left Fr\'{e}chet $q$-modules,
one needs to use the Putinar spectrum $\sigma_{\operatorname{P}}\left(
\mathcal{O}_{q},X\right)  $. We also use the notation $\sigma\left(
T,S\right)  $ for the Taylor spectrum of a left Fr\'{e}chet $q$-module $X$
given by the couple $\left(  T,S\right)  $ (see Definition \ref{def11}), that
is, $\sigma\left(  T,S\right)  =\sigma\left(  \mathcal{O}_{q}\left(
\mathbb{C}_{xy}\right)  ,X\right)  $.

\subsection{Noncommutative analytic spaces $\left(  \mathbb{C}_{x}%
,\mathcal{O}\left[  \left[  y\right]  \right]  \right)  $ and $\left(
\mathbb{C}_{y},\left[  \left[  x\right]  \right]  \mathcal{O}\right)  $}

As above let $\mathfrak{A}_{q}$ be the contractive quantum plane, and let
$\mathcal{O}^{\mathfrak{q}}\left[  \left[  y\right]  \right]  =\mathcal{O}%
^{\mathfrak{q}}\widehat{\otimes}\mathbb{C}\left[  \left[  y\right]  \right]  $
be the projective tensor product of the Fr\'{e}chet sheaf $\mathcal{O}%
^{\mathfrak{q}}$ and the constant Fr\'{e}chet sheaf $\mathbb{C}\left[  \left[
y\right]  \right]  $ on the topological space $\left(  \mathbb{C}%
_{x},\mathfrak{q}\right)  $. Thus $\mathcal{O}^{\mathfrak{q}}\left[  \left[
y\right]  \right]  $ is a Fr\'{e}chet sheaf on $\left(  \mathbb{C}%
_{x},\mathfrak{q}\right)  $. It turns out that $\mathcal{O}^{\mathfrak{q}%
}\left[  \left[  y\right]  \right]  $ is a Fr\'{e}chet algebra sheaf equipped
with the formal $q$-multiplication (see \cite{Dosi25}). If $f=\sum_{n}%
f_{n}\left(  z\right)  y^{n}$ and $g=\sum_{n}g_{n}\left(  z\right)  y^{n}$ are
sections of the sheaf $\mathcal{O}^{\mathfrak{q}}\left[  \left[  y\right]
\right]  $ over a $q$-open subset $U_{x}\subseteq\mathbb{C}_{x}$, then we put
\begin{equation}
f\cdot g=\sum_{n}\left(  \sum_{i+j=n}f_{i}\left(  z\right)  g_{j}\left(
q^{i}z\right)  \right)  y^{n}. \label{lm1}%
\end{equation}
Notice that $\left\{  q^{i}z:i\in\mathbb{Z}_{+}\right\}  \cup\left\{
0\right\}  =\left\{  z\right\}  _{q}\subseteq U_{x}$ whenever $z\in U_{x}$,
and $f_{i}\left(  z\right)  g_{j}\left(  q^{i}z\right)  $ is the standard
multiplication from the commutative algebra $\mathcal{O}\left(  U_{x}\right)
$. If $\kappa:\mathfrak{A}_{q}\rightarrow B$ is a homomorphism into a a unital
Banach algebra with nilpotent $\kappa\left(  y\right)  $, then using very
similar arguments from \cite{DosiSS}, we deduce that $\kappa$ has a unique
extension up to be a continuous algebra homomorphism $\mathcal{O}\left(
\mathbb{C}_{x}\right)  \left[  \left[  y\right]  \right]  \rightarrow B$.
Hence $\mathcal{O}\left(  \mathbb{C}_{x}\right)  \left[  \left[  y\right]
\right]  =\widehat{\mathfrak{A}_{q}}^{y}$ is the multinormed envelope of
$\mathfrak{A}_{q}$. In a similar way, we come up with the Fr\'{e}chet algebra
sheaf $\left[  \left[  x\right]  \right]  \mathcal{O}^{\mathfrak{q}%
}=\mathbb{C}\left[  \left[  x\right]  \right]  \widehat{\otimes}%
\mathcal{O}^{\mathfrak{q}}$ on $\left(  \mathbb{C}_{y},\mathfrak{q}\right)  $,
and $\left[  \left[  x\right]  \right]  \mathcal{O}\left(  \mathbb{C}%
_{y}\right)  =\widehat{\mathfrak{A}_{q}}^{x}$.

If $A=\mathcal{O}\left(  \mathbb{C}_{x}\right)  \left[  \left[  y\right]
\right]  $ then the canonical homomorphism $\mathfrak{A}_{q}\rightarrow A$ is
a localization \cite{DosiLoc}, and $\operatorname{Spec}\left(  \mathcal{O}%
\left(  U_{x}\right)  \left[  \left[  y\right]  \right]  \right)  =U_{x}$ for
every $\mathfrak{q}$-open subset $U_{x}\subseteq\mathbb{C}_{x}$ (see
\cite[Corollary 5.4]{Dosi25}). Thus $A$ is of finite type and $\left(
\mathbb{C}_{x},\mathcal{O}\left[  \left[  y\right]  \right]  \right)  $ is a
standard analytic geometry of $A$. The topology base $\mathfrak{b}%
_{x}=\left\{  V_{x,i}:i\in I\right\}  $ for $\left(  \mathbb{C}_{x}%
,\mathfrak{q}\right)  $ consists of all Runge $q$-open subsets.

\begin{proposition}
\label{propCex2}The unital complete lattice $A$-category $\mathcal{O}\left[
\left[  y\right]  \right]  $ with the basis $\mathfrak{b}_{x}$ is a \v{C}ech
$A$-category. If $X$ is a left Fr\'{e}chet $A$-module given by an operator
couple $\left(  T,S\right)  $ with $TS=q^{-1}ST$, $U_{x}\subseteq
\mathbb{C}_{x}$ an open subset containing $\sigma_{\operatorname{P}}\left(
\mathcal{O}\left[  \left[  y\right]  \right]  ,X\right)  $, then the
$A$-action on $X$ can be lifted to a left $\mathcal{O}\left(  U_{x}\right)
\left[  \left[  y\right]  \right]  $-module structure on $X$, and
$\sigma\left(  T,S\right)  \subseteq\sigma_{\operatorname{P}}\left(
\mathcal{O}\left[  \left[  y\right]  \right]  ,X\right)  $. If $X$ is a left
Banach $A$-module given by a couple $\left(  T,S\right)  $ from $\mathcal{B}%
\left(  X\right)  $ with the nilpotent $S$, then
\[
\sigma_{\operatorname{P}}\left(  \mathcal{O}\left[  \left[  y\right]  \right]
,X\right)  =\sigma\left(  T,S\right)  ^{-\mathfrak{q}}.
\]

\end{proposition}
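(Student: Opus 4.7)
My plan is to treat the three assertions in order, reducing each to the general machinery developed earlier. For the first assertion, I would apply Proposition~\ref{propAcyclic}. The candidate basis $\mathfrak{b}_{x}=\{V_{x,i}\}$ is countable and any finite intersection $V_{\alpha}=V_{x,i_{0}}\cap\cdots\cap V_{x,i_{p}}$ is again a Runge $q$-open subset of $(\mathbb{C}_{x},\mathfrak{q})$, hence $\mathcal{O}^{\mathfrak{q}}$-acyclic. Since $\mathcal{O}\left[\left[y\right]\right](V_{\alpha})=\mathcal{O}(V_{\alpha})\widehat{\otimes}\mathbb{C}\left[\left[y\right]\right]$ is the completed projective tensor product with the nuclear Fr\'{e}chet space $\mathbb{C}\left[\left[y\right]\right]$, the \v{C}ech complex of $\mathcal{O}\left[\left[y\right]\right]$ over $\mathfrak{b}_{x}$ is isomorphic to the \v{C}ech complex of $\mathcal{O}^{\mathfrak{q}}$ tensored with the nuclear space $\mathbb{C}\left[\left[y\right]\right]$; exactness is preserved by nuclearity (\cite[A1.6 (d)]{EP}). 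Acyclicity of $\mathbb{C}_{x}$ itself is checked identically, so $\mathcal{O}\left[\left[y\right]\right]$ on the base $\mathfrak{b}_{x}$ is a \v{C}ech $A$-category in the sense of Definition~\ref{defCech}, with nuclear basis.

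For the second assertion, I would invoke Theorem~\ref{corkeyy2}. The algebra $A=\mathcal{O}(\mathbb{C}_{x})\left[\left[y\right]\right]$ is of finite type (the canonical map $\mathfrak{A}_{q}\rightarrow A$ is a localization and $\mathfrak{A}_{q}$ has a finite free bimodule resolution), $\mathcal{O}\left[\left[y\right]\right]$ is a nuclear Fr\'{e}chet algebra presheaf, and the first step furnishes the exact augmented \v{C}ech complex. Consequently $X$ becomes a left Fr\'{e}chet $\mathcal{O}(U_{x})\left[\left[y\right]\right]$-module extending its $A$-module structure through the restriction $A\rightarrow\mathcal{O}(U_{x})\left[\left[y\right]\right]$. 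For the inclusion $\sigma(T,S)\subseteq\sigma_{\operatorname{P}}(\mathcal{O}\left[\left[y\right]\right],X)$ I would appeal to Corollary~\ref{corM1}: the base $\mathfrak{b}_{x}$ is \emph{localizing} because each restriction $A\rightarrow\mathcal{O}(V_{x,i})\left[\left[y\right]\right]$ is a Taylor localization by \cite{DosiLoc}, so Proposition~\ref{propLocG} yields $\sigma(A,X)\subseteq\sigma_{\operatorname{P}}(\mathcal{O}\left[\left[y\right]\right],X)$, and by the identification of $\sigma(A,X)$ with $\sigma(T,S)$ this is exactly what is claimed.

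For the third assertion, assume $X$ is a left Banach $A$-module with nilpotent $S$. By Corollary~\ref{corM1} it suffices to show that $X$ is $\widehat{\mathcal{O}\left[\left[y\right]\right]}$-local, for then $\sigma_{\operatorname{P}}(\mathcal{O}\left[\left[y\right]\right],X)=\sigma(A,X)^{-}$ in the topology inherited from $(\mathbb{C}_{x},\mathfrak{q})$, which is precisely $\sigma(T,S)^{-\mathfrak{q}}$. So the key technical step is the local-to-global transversality: for every $q$-Runge $V_{x,i}$ with $\mathcal{B}=\mathcal{O}(V_{x,i})\left[\left[y\right]\right]$, the vanishing $\mathbb{C}(\lambda)\perp X$ for all $\lambda\in V_{x,i}$ should force $\mathcal{B}\perp X$. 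This is the main obstacle. My plan is to exploit that $S$ is nilpotent on the Banach space $X$, so the filtration $\ker S\subseteq\ker S^{2}\subseteq\cdots$ exhausts $X$ with successive quotients that are Banach modules over $\mathcal{O}(V_{x,i})$; the transversality over $\mathcal{B}$ then reduces to transversality over the commutative Fr\'{e}chet algebra $\mathcal{O}(V_{x,i})$, which is the Banach-module framework where the classical Putinar local-to-global principle from \cite{Put} applies. Combining nilpotency of $S$ with the Koszul-type finite free resolution of $\mathcal{B}$ over $A$ and a spectral sequence argument on the filtration should reduce the problem to the commutative case treated in \cite{Put} and \cite{Dos}. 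Once locality is established, Corollary~\ref{corM1} gives the required equality.
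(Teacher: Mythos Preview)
Your treatment of the first two assertions is essentially the paper's own. The paper does not literally invoke Proposition~\ref{propAcyclic} for the \v{C}ech property but does exactly what you describe in substance: it writes the augmented \v{C}ech complex of $\mathcal{O}[[y]]$ over $\mathfrak{b}_{x}$ as $\mathcal{E}_{\mathbb{C}}^{\bullet}(\mathfrak{b}_{x})\widehat{\otimes}\mathbb{C}[[y]]$, uses $\mathcal{O}$-acyclicity of the Runge $q$-open sets and of $\mathbb{C}_{x}$ to get exactness of $\mathcal{E}_{\mathbb{C}}^{\bullet}(\mathfrak{b}_{x})$, and then tensors with the nuclear space $\mathbb{C}[[y]]$. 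For the functional calculus the paper appeals to Proposition~\ref{remPut1} and Theorem~\ref{thcoreFC} directly rather than to Theorem~\ref{corkeyy2}, but this is the same argument. One point you glide over: the identification $\sigma(T,S)=\sigma(A,X)$ is justified in the paper by observing that both $\mathfrak{A}_{q}\to\mathcal{O}_{q}(\mathbb{C}_{xy})$ and $\mathfrak{A}_{q}\to A$ are localizations, hence so is $\mathcal{O}_{q}(\mathbb{C}_{xy})\to A$ (via \cite[Proposition 1.8]{Tay2}); you should make this explicit.

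The real divergence is in the third assertion. The paper does \emph{not} carry out any filtration or spectral-sequence reduction to the commutative Putinar principle. It simply quotes \cite[Theorem 6.3]{Dosi25}, which states that a left Banach $q$-module is $\mathcal{O}[[y]]$-local, and then applies Proposition~\ref{propLocG} (equivalently Corollary~\ref{corM1}) to conclude $\sigma_{\operatorname{P}}(\mathcal{O}[[y]],X)=\sigma(T,S)^{-\mathfrak{q}}$. Your plan to reprove locality inline via the filtration $\ker S\subseteq\ker S^{2}\subseteq\cdots$ is a reasonable heuristic, but as written it is only a plan: you would need to check that the successive quotients are genuinely Banach $\mathcal{O}(\mathbb{C}_{x})$-modules, that transversality over $\mathcal{O}(V_{x,i})[[y]]$ really decomposes along the filtration, and that the classical Putinar principle applies at each stage. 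None of this is carried out, and the paper sidesteps it entirely by citation. Either cite \cite[Theorem 6.3]{Dosi25} as the paper does, or turn your sketch into an actual argument.
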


\begin{proof}
The augmented \v{C}ech complex (\ref{ACc}) of $\mathcal{O}\left[  \left[
y\right]  \right]  $ is just the \v{C}ech complex of the covering
$\mathfrak{b}_{x}$. As above for every tuple $\alpha=\left(  i_{0}%
,\ldots,i_{p}\right)  \in I^{p+1}$ we have $V_{x,\alpha}=V_{x,i_{0}}\cap
\ldots\cap V_{x,i_{p}}$. As above, every $V_{x,\alpha}$ and all space
$\mathbb{C}_{x}$ are $\mathcal{O}$-acyclic spaces, therefore the augmented
\v{C}ech complex $0\rightarrow\mathcal{O}\left(  \mathbb{C}_{x}\right)
\longrightarrow\mathcal{E}_{\mathbb{C}}^{\bullet}\left(  \mathfrak{b}%
_{x}\right)  $ of the covering $\mathfrak{b}_{x}$ for the standard sheaf
$\mathcal{O}$ is exact. But $\mathbb{C}\left[  \left[  y\right]  \right]  $ is
a nuclear Fr\'{e}chet space, therefore $0\rightarrow\mathcal{O}\left(
\mathbb{C}_{x}\right)  \widehat{\otimes}\mathbb{C}\left[  \left[  y\right]
\right]  \longrightarrow\mathcal{E}_{\mathbb{C}}^{\bullet}\left(
\mathfrak{b}_{x}\right)  \widehat{\otimes}\mathbb{C}\left[  \left[  y\right]
\right]  $. One can easily verify that $\mathcal{E}^{\bullet}\left(
\mathfrak{b}_{x}\right)  =\mathcal{E}_{\mathbb{C}}^{\bullet}\left(
\mathfrak{b}_{x}\right)  \widehat{\otimes}\mathbb{C}\left[  \left[  y\right]
\right]  $, that is, $\mathcal{O}\left[  \left[  y\right]  \right]  $ is a
\v{C}ech $A$-category.

Now let $X$ be a left Fr\'{e}chet $A$-module, which in turn is a left
Fr\'{e}chet $q$-module automatically. By Proposition \ref{remPut1}, the
$\mathcal{O}\left[  \left[  y\right]  \right]  $-spectrum $\sigma\left(
\mathcal{O}\left[  \left[  y\right]  \right]  ,X\right)  $ is reduced to the
Putinar spectrum $\sigma_{\operatorname{P}}\left(  \mathcal{O}\left[  \left[
y\right]  \right]  ,X\right)  $, and the functional calculus follows due to
Theorem \ref{thcoreFC}.

Since both $\mathfrak{A}_{q}\rightarrow\mathcal{O}_{q}\left(  \mathbb{C}%
_{xy}\right)  $ and $\iota:\mathfrak{A}_{q}\rightarrow A$ are localizations,
it follows that so is $\mathcal{O}_{q}\left(  \mathbb{C}_{xy}\right)
\rightarrow A$ (see \cite[Proposition 1.8]{Tay2}). Therefore $\sigma\left(
T,S\right)  =\sigma\left(  \mathcal{O}_{q}\left(  \mathbb{C}_{xy}\right)
,X\right)  =\sigma\left(  A,X\right)  $. Further, $\mathcal{O}\left[  \left[
y\right]  \right]  $ is a nuclear $A$-category with the localizing basis
$\mathfrak{b}_{x}$ \cite{DosiLoc}, and $\mathcal{O}\left[  \left[  y\right]
\right]  ^{\sim}\cap\mathcal{T}=\operatorname{Spec}\left(  A\right)
=\mathbb{C}_{x}$. Using Proposition \ref{propLocG}, we deduce that
\[
\sigma\left(  T,S\right)  =\sigma\left(  A,X\right)  \subseteq\sigma
_{\operatorname{P}}\left(  \mathcal{O}\left[  \left[  y\right]  \right]
,X\right)  \subseteq\mathbb{C}_{x}.
\]
Notice that in the case of a left Banach $q$-module, the fact that
$\sigma\left(  T,S\right)  \subseteq\mathbb{C}_{x}$ also follows from the
$q$-projection property proven in \cite{Dosi242}. Namely, since
$A=\widehat{\mathfrak{A}_{q}}^{y}$, it follows that $S$ is a nilpotent
operator and
\begin{align*}
\sigma\left(  T,S\right)   &  \subseteq\left(  \left(  \sigma\left(  T\right)
\cup\sigma\left(  q^{-1}T\right)  \right)  \times\left\{  0\right\}  \right)
\cup\left(  \left\{  0\right\}  \times\left(  \sigma\left(  S\right)
\cup\sigma\left(  qS\right)  \right)  \right) \\
&  =\left(  \sigma\left(  T\right)  \cup\sigma\left(  q^{-1}T\right)  \right)
\times\left\{  0\right\}  \subseteq\mathbb{C}_{x}.
\end{align*}
Finally, a left Banach $q$-module $X$ is an $\mathcal{O}\left[  \left[
y\right]  \right]  $\textit{-}local left $A$-module \cite[Theorem 6.3]%
{Dosi25}. Using again Proposition \ref{propLocG}, we obtain that
$\sigma_{\operatorname{P}}\left(  \mathcal{O}\left[  \left[  y\right]
\right]  ,X\right)  =\sigma\left(  T,S\right)  ^{-}$ with respect to the
topology of the point completion $\mathcal{O}\left[  \left[  y\right]
\right]  ^{\sim}$, which is just the $\mathfrak{q}$-topology of $\mathbb{C}%
_{x}$ (see Lemma \ref{lemAG1}).
\end{proof}

\begin{remark}
\label{remxO}In a similar way, $\left(  \mathbb{C}_{y},\left[  \left[
x\right]  \right]  \mathcal{O}\right)  $ stands for the multinormed envelope
$A=\widehat{\mathfrak{A}_{q}}^{x}$. In this case, $\sigma\left(  T,S\right)
\subseteq\sigma_{\operatorname{P}}\left(  \left[  \left[  x\right]  \right]
\mathcal{O},X\right)  \subseteq\mathbb{C}_{y}$, and if $X$ is a left Banach
$A$-module given by a couple $\left(  T,S\right)  $ from $\mathcal{B}\left(
X\right)  $ with the nilpotent $T$, then $\sigma_{\operatorname{P}}\left(
\left[  \left[  x\right]  \right]  \mathcal{O},X\right)  =\sigma\left(
T,S\right)  ^{-\mathfrak{q}}$ in $\left(  \mathbb{C}_{y},\mathfrak{q}\right)
$.
\end{remark}

\subsection{Noncommutative analytic space $\left(  \left\{  \left(
0,0\right)  \right\}  ,\mathbb{C}_{q}\left[  \left[  x,y\right]  \right]
\right)  $}

Now let $\mathfrak{A}_{q}$ be the quantum plane with $\left\vert q\right\vert
\leq1$, and consider its RN or RF envelope $A$ (see (\ref{Cq})), which is just
the nuclear Fr\'{e}chet algebra $\mathbb{C}_{q}\left[  \left[  x,y\right]
\right]  $ of all formal power series in variables $x$ and $y$ equipped with
the $q$-multiplication from (\ref{lm1}) with $x$ replaced by $z$. The
canonical homomorphism $\iota:\mathfrak{A}_{q}\rightarrow A$ is a localization
\cite{DosiLoc}, and $\operatorname{Spec}\left(  A\right)  =\left\{  \left(
0,0\right)  \right\}  $ (see \cite[Lemma 5.1]{Dosi25}). Thus $A$ is of finite
type and $\left(  \left\{  \left(  0,0\right)  \right\}  ,\mathbb{C}%
_{q}\left[  \left[  x,y\right]  \right]  \right)  $ is a standard analytic
geometry of $A$ with its constant sheaf and localizing basis $\mathfrak{b}%
_{0}=\left\{  A\right\}  $. Obviously, it is a \v{C}ech $A$-category with the
basis $\mathfrak{b}_{0}$.

\begin{proposition}
\label{propCex3}Let $X$ be a nontrivial left Fr\'{e}chet $q$-module given by
an operator couple $\left(  T,S\right)  $ with $TS=q^{-1}ST$, $\left\vert
q\right\vert \leq1$. Then $X$ is a left $A$-module if and only if
$\sigma_{\operatorname{P}}\left(  A,X\right)  \neq\varnothing$. In this case,%
\[
\sigma\left(  A,X\right)  =\sigma_{\operatorname{P}}\left(  A,X\right)
=\sigma\left(  T,S\right)  =\left\{  \left(  0,0\right)  \right\}  .
\]

\end{proposition}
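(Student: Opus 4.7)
The plan exploits the extreme rigidity of $A = \mathbb{C}_q[[x,y]]$: its character spectrum $\operatorname{Spec}(A)=\{(0,0)\}$ is a single point (\cite[Lemma 5.1]{Dosi25}), and the associated $A$-category $\widehat{A}$ has only $A$ itself (besides the trivial algebra) as a nontrivial object, so the basis $\mathfrak{b}_0 = \{A\}$ is forced. For the forward direction, if $X$ is a nontrivial left Fréchet $A$-module then $\operatorname{Tor}_0^A(A,X)=X\ne 0$ forces $A\notin \operatorname{res}(\widehat{A},X)$; the trivial algebra contributes no characters to the point completion, so $\operatorname{res}_{\operatorname{P}}(\widehat{A},X)=\varnothing$ and $\sigma_{\operatorname{P}}(A,X)=\operatorname{Spec}(A)=\{(0,0)\}$. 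This settles the implication $X$ is an $A$-module $\Rightarrow$ $\sigma_{\operatorname{P}}(A,X)\ne\varnothing$ and the value of the Putinar spectrum.

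To establish $\sigma(T,S)=\{(0,0)\}$, the universal property of the Arens--Michael envelope $\mathcal{O}_q(\mathbb{C}_{xy})=\widehat{\mathfrak{A}_q}$ provides a canonical continuous algebra morphism $\kappa:\mathcal{O}_q(\mathbb{C}_{xy})\to A$ extending $\iota$, through which $X$ inherits an $\mathcal{O}_q(\mathbb{C}_{xy})$-module structure. For any point $(\lambda,\mu)\in\mathbb{C}_{xy}\setminus\{(0,0)\}$, at least one of $\lambda$, $\mu$ is nonzero; since formal power series with nonzero constant term are units in $A$, one of the elements $x-\lambda=-\lambda(1-\lambda^{-1}x)$ or $y-\mu=-\mu(1-\mu^{-1}y)$ is a unit of $A$, whence the corresponding operator $T-\lambda$ or $S-\mu$ is invertible on $X$, and the Koszul-type argument for the $q$-commuting Taylor spectrum excludes $(\lambda,\mu)$ from $\sigma(T,S)$. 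This gives $\sigma(T,S)\subseteq\{(0,0)\}$, and the standard nonvanishing of the Taylor spectrum for a nontrivial module forces equality. For $\sigma(A,X)=\{(0,0)\}$ it suffices to establish $\mathbb{C}(0,0)\not\perp_A X$; since both $\mathfrak{A}_q\to A$ and $\mathfrak{A}_q\to\mathcal{O}_q(\mathbb{C}_{xy})$ are localizations, the Tor groups coincide, $\operatorname{Tor}_k^A(\mathbb{C}(0,0),X)=\operatorname{Tor}_k^{\mathfrak{A}_q}(\mathbb{C}(0,0),X)=\operatorname{Tor}_k^{\mathcal{O}_q}(\mathbb{C}(0,0),X)$, and hence $\mathbb{C}(0,0)\perp_A X$ iff $(0,0)\notin\sigma(T,S)$; since $(0,0)\in\sigma(T,S)$ we conclude $\sigma(A,X)=\{(0,0)\}$.

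For the converse implication, I interpret $\sigma_{\operatorname{P}}(A,X)$ for a $q$-module $X$ that is not a priori an $A$-module by working through the Tor groups over $\mathfrak{A}_q$ via the localization $\iota:\mathfrak{A}_q\to A$: nonemptiness $\sigma_{\operatorname{P}}(A,X)\ne\varnothing$ is read as the dominance $(A,\iota)\gg X$ over $\mathfrak{A}_q$, that is, $\operatorname{Tor}_k^{\mathfrak{A}_q}(A,X)=0$ for $k\geq 1$ together with the Hausdorff isomorphism $A\widehat{\otimes}_{\mathfrak{A}_q}X\cong X$. By Lemma~\ref{lemDom12} applied to the localization $\iota$, this is equivalent to the $\mathfrak{A}_q$-action on $X$ lifting to a continuous left $A$-module structure, proving the reverse implication. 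The main obstacle is precisely this interpretive step: making the Putinar spectrum meaningful for $q$-modules which are not yet $A$-modules so that the converse carries nontrivial content rather than being tautological; once the identification of $\sigma_{\operatorname{P}}(A,X)\ne\varnothing$ with the dominance $(A,\iota)\gg X$ is secured, the argument reduces to a direct application of the framework developed in Sections~\ref{sPre} and \ref{sectionFM}.
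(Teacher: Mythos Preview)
Your forward direction parallels the paper's and is largely sound; the invertibility argument for $\sigma(T,S)\subseteq\{(0,0)\}$ (via $x-\lambda$ or $y-\mu$ being a unit in $A$) is a reasonable direct alternative to the paper's appeal to the localization $\mathcal{O}_q(\mathbb{C}_{xy})\to A$ giving $\sigma(T,S)=\sigma(A,X)$.

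There are two real gaps. First, the appeal to ``standard nonvanishing of the Taylor spectrum'' to obtain $(0,0)\in\sigma(T,S)$ is unjustified for Fr\'echet $q$-modules: nonemptiness of the Taylor spectrum is a Banach-module theorem and no such general result is available in the Fr\'echet setting. The paper instead invokes \cite[Proposition~6.4]{Dosi25}, which gives the specific equivalence $(0,0)\notin\sigma(T,S)\iff A\perp X$; since $\operatorname{Tor}_0(A,X)=X\neq 0$ one gets $A\not\perp X$ and hence $(0,0)\in\sigma(T,S)$ directly.

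Second, your reading of the converse is incorrect. If one computes $\sigma_{\operatorname{P}}(A,X)$ over $\mathfrak{A}_q$ by treating the one-object category $\{A\}$ as an $\mathfrak{A}_q$-category, then $\sigma_{\operatorname{P}}(A,X)\neq\varnothing$ unwinds to $A\not\perp_{\mathfrak{A}_q}X$, i.e.\ $\operatorname{Tor}_k^{\mathfrak{A}_q}(A,X)\neq 0$ for some $k$ --- \emph{not} to the dominance $(A,\iota)\gg X$. These are not equivalent: for $X=\mathcal{O}_q(\mathbb{C}_{xy})$ one has $\operatorname{Tor}_0^{\mathfrak{A}_q}(A,X)\cong A\neq 0$, so $A\not\perp_{\mathfrak{A}_q}X$, yet $X$ is certainly not an $A$-module. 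Hence your proposed route via Lemma~\ref{lemDom12} does not go through. The paper's own converse is terse and is best read as taking $\sigma_{\operatorname{P}}(A,X)$ to be defined only for $A$-modules (where it is then automatically nonempty), making the biconditional a repackaging of the forward direction rather than an independent lifting criterion; your attempt to extract genuine content from it over $\mathfrak{A}_q$ does not succeed as stated.
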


\begin{proof}
As above taking into account that both $\mathfrak{A}_{q}\rightarrow
\mathcal{O}_{q}\left(  \mathbb{C}_{xy}\right)  $ and $\iota:\mathfrak{A}%
_{q}\rightarrow A$ are localizations, we conclude that so is $\mathcal{O}%
_{q}\left(  \mathbb{C}_{xy}\right)  \rightarrow A$ (see \cite[Proposition
1.8]{Tay2}) and $\sigma\left(  T,S\right)  =\sigma\left(  A,X\right)  $
whenever $X$ is a left $A$-module. Using Lemma \ref{lemDom12}, we deduce that
$\left(  A,\iota\right)  \gg X$ holds iff $X$ is a left $A$-module. In
particular, if $X$ is a left $A$-module, then $A\perp X$ does not hold,
therefore $A\in\sigma\left(  A,X\right)  $. As above, by Proposition
\ref{remPut1}, we obtain that $\sigma\left(  A,X\right)  =\sigma
_{\operatorname{P}}\left(  A,X\right)  $, $A^{\sim}\cap\mathcal{T}%
=\operatorname{Spec}\left(  A\right)  =\left\{  \left(  0,0\right)  \right\}
$ and $\sigma\left(  T,S\right)  =\sigma\left(  \operatorname{Spec}\left(
A\right)  ,X\right)  \subseteq\sigma_{\operatorname{P}}\left(  A,X\right)
=\left\{  \left(  0,0\right)  \right\}  $. In particular, $\sigma
_{\operatorname{P}}\left(  A,X\right)  \neq\varnothing$. Moreover, it was
proved in \cite[Proposition 6.4]{Dosi25}, that $\left(  0,0\right)
\notin\sigma\left(  T,S\right)  $ iff $A\perp X$ holds. Therefore $\left(
0,0\right)  \in\sigma\left(  T,S\right)  $ and we have $\sigma\left(
A,X\right)  =\sigma_{\operatorname{P}}\left(  A,X\right)  =\sigma\left(
T,S\right)  =\left\{  \left(  0,0\right)  \right\}  $.

Conversely, if $\sigma_{\operatorname{P}}\left(  A,X\right)  $ is not empty,
then $A\in\sigma\left(  A,X\right)  $ or $X$ is a left $A$-module. Optionally,
one can use Theorem \ref{thcoreFC} to deduce the presence of $A$-calculus on
$X$.
\end{proof}

\begin{remark}
In the case of a left Banach $q$-module $X$ with the nilpotent operators $T$
and $S$, we obtain that $X$ is a left $A$-module automatically. Indeed, using
\cite[Lemma 3.1, 3.2]{Dosi24}, we derive that the (non-unital) subalgebra
$B\subseteq\mathcal{B}\left(  X\right)  $ generated by $T$ and $S$ is a finite
dimensional nilpotent algebra, and $\mathfrak{A}_{q}\rightarrow B_{+}$,
$x\mapsto T$, $y\mapsto S$ is a local morphism. Taking into account that
$A=\widehat{\mathfrak{A}_{q}}^{\text{RF}}$ (see (\ref{Cq})), we derive that
$X$ is a left Banach $A$-module given by the couple $\left(  T,S\right)  $. By
Proposition \ref{propCex3}, we conclude that $\sigma\left(  T,S\right)
=\left\{  \left(  0,0\right)  \right\}  $. Actually, the inclusion
$\sigma\left(  T,S\right)  \subseteq\left\{  \left(  0,0\right)  \right\}  $
follows from the $q$-projection property mentioned above.
\end{remark}

\subsection{The formal $q$-geometry}

Let $\mathfrak{A}_{q}$ be the contractive quantum plane. The formal geometry
of $\mathfrak{A}_{q}$ \cite{Dosi25} is given by a couple $\left(
\mathbb{C}_{xy},\mathcal{F}_{q}\right)  $ of the topological space
$\mathbb{C}_{xy}$, and a noncommutative structure (Fr\'{e}chet) algebra sheaf
$\mathcal{F}_{q}$ on $\mathbb{C}_{xy}$. The space $\mathbb{C}_{xy}$ is
equipped with the final $\mathfrak{q}$-topology so that both embeddings%
\[
\left(  \mathbb{C}_{x},\mathfrak{q}\right)  \hookrightarrow\mathbb{C}%
_{xy}\hookleftarrow\left(  \mathbb{C}_{y},\mathfrak{q}\right)
\]
are continuous. Thus $U\subseteq\mathbb{C}_{xy}$ is open iff $U=U_{x}\cup
U_{y}$ for $\mathfrak{q}$-open components $U_{x}\subseteq\mathbb{C}_{x}$ and
$U_{y}\subseteq\mathbb{C}_{y}$. The structure sheaf $\mathcal{F}_{q}$ on
$\mathbb{C}_{xy}$ is defined (see \cite{Dosi25}) as the fibered product
\[
\mathcal{F}_{q}=\mathcal{O}\left[  \left[  y\right]  \right]
\underset{\mathbb{C}_{q}\left[  \left[  x,y\right]  \right]  }{\times}\left[
\left[  x\right]  \right]  \mathcal{O}%
\]
of the Fr\'{e}chet algebra sheaves $\mathcal{O}\left[  \left[  y\right]
\right]  $ (on $\mathbb{C}_{x}$) and $\left[  \left[  x\right]  \right]
\mathcal{O}$ (on $\mathbb{C}_{y}$) over the constant sheaf $\mathbb{C}%
_{q}\left[  \left[  x,y\right]  \right]  $ equipped with the $q$%
-multiplication extending one of $\mathfrak{A}_{q}$. The algebra
$A=\Gamma\left(  \mathbb{C}_{xy},\mathcal{F}_{q}\right)  $ of all global
sections is reduced to the N-envelope $\widehat{\mathfrak{A}_{q}}^{\text{N}}$
(or PI-envelope) of $\mathfrak{A}_{q}$, the canonical embedding $\mathfrak{A}%
_{q}\rightarrow A$ is a localization \cite{DosiLoc}, and $\operatorname{Spec}%
\left(  \mathcal{F}_{q}\left(  U\right)  \right)  =U$ for every $\mathfrak{q}%
$-open subset $U\subseteq\mathbb{C}_{xy}$ \cite{DosiSS}. Thus $\left(
\mathbb{C}_{xy},\mathcal{F}_{q}\right)  $ is a standard analytic geometry (see
Definition \ref{defAG}) of $A$. Moreover, every $q$-open subset of
$\mathbb{C}_{xy}$ is $\mathcal{F}_{q}$-acyclic (see \cite{DosiSS}), and
$\mathbb{C}_{xy}$ has a countable topology base $\mathfrak{b}$ of Runge
$q$-open subsets, which is a localizing basis indeed (see \cite{DosiLoc}).

\begin{proposition}
\label{propQP1}The unital complete lattice $A$-category $\mathcal{F}_{q}$ is
\textit{a \v{C}ech }$A$-\textit{category} with its nuclear localizing basis
$\mathfrak{b}$. If $X$ is a left Fr\'{e}chet $A$-module given by an operator
couple $\left(  T,S\right)  $ with $TS=q^{-1}ST$, $\left\vert q\right\vert
<1$, and $U\subseteq\mathbb{C}_{xy}$ is a Runge $q$-open subset containing
$\sigma_{\operatorname{P}}\left(  \mathcal{F}_{q},X\right)  $, then $\left(
\mathcal{F}_{q}\left(  U\right)  ,\iota_{U}\right)  \gg X$ and
\[
\sigma\left(  \mathcal{F}_{q}\left(  U\right)  ,X\right)  =\sigma\left(
T,S\right)  \subseteq\sigma_{\operatorname{P}}\left(  \mathcal{F}%
_{q},X\right)  =\sigma_{\operatorname{P}}\left(  \mathcal{F}_{q}%
|_{U},X\right)  ,
\]
where $\iota_{U}:A\rightarrow\mathcal{F}_{q}\left(  U\right)  $ is the
restriction morphism. In the case of a left Banach $A$-module $X$, we have
$\sigma\left(  T,S\right)  ^{-\mathfrak{q}}=\sigma_{\operatorname{P}}\left(
\mathcal{F}_{q},X\right)  $.
\end{proposition}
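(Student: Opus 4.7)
The plan is to assemble four ingredients already at hand: (i) the $\mathcal{F}_q$-acyclicity of every Runge $q$-open subset of $\mathbb{C}_{xy}$ (and of $\mathbb{C}_{xy}$ itself); (ii) the localization property of $A\to \mathcal{F}_q(V)$ for $V$ Runge $q$-open from \cite{DosiLoc}; (iii) the identification $\operatorname{Spec}(A)=\mathbb{C}_{xy}$, which makes $(\mathbb{C}_{xy},\mathcal{F}_q)$ a standard analytic geometry of $A$; (iv) the general functional calculus and spectral mapping machinery of Sections \ref{sectionFM}--\ref{sectionAFA}, namely Proposition \ref{remPut1}, Proposition \ref{propLocG}, Corollary \ref{corM1}, and Theorem \ref{corkeyy2}.

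First I would verify the \v{C}ech property. Since every Runge $q$-open subset $V_i\in\mathfrak{b}$ and every finite intersection $V_\alpha=V_{i_0}\cap\cdots\cap V_{i_p}$ is itself Runge $q$-open, all such subsets together with $\mathbb{C}_{xy}$ are $\mathcal{F}_q$-acyclic, so Proposition \ref{propAcyclic} gives exactness of the augmented \v{C}ech complex of $\mathfrak{b}$. Nuclearity of the basis follows because each $\mathcal{F}_q(V_i)$ is the fibered product of the nuclear Fr\'echet algebras $\mathcal{O}(U_{x})[[y]]$ and $[[x]]\mathcal{O}(U_y)$ over $\mathbb{C}_q[[x,y]]$, and the localizing property is exactly the content of \cite{DosiLoc}. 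Thus $\mathcal{F}_q$ becomes a \v{C}ech $A$-category with a nuclear localizing basis.

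For the second statement, I would apply Theorem \ref{corkeyy2}. The algebra $A=\widehat{\mathfrak{A}_q}^{\mathrm{N}}$ is of finite type, as $\mathfrak{A}_q$ carries a Koszul-type finite free bimodule resolution (see \cite{Tah}) and the localization $\mathfrak{A}_q\to A$ into the nuclear algebra $A$ transfers it to an $A$-bimodule resolution (Subsection \ref{subsecTL}). Because $(\mathbb{C}_{xy},\mathcal{F}_q)$ is standard, Proposition \ref{remPut1} yields $\sigma(\mathcal{F}_q,X)=\sigma_{\operatorname{P}}(\mathcal{F}_q,X)$, so the hypothesis $\sigma_{\operatorname{P}}(\mathcal{F}_q,X)\subseteq U$ places the full category spectrum inside $U$. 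Since $U$ is Runge $q$-open, $A\to\mathcal{F}_q(U)$ is a localization, whence Lemma \ref{lemDom12} converts it to the dominance $(\mathcal{F}_q(U),\iota_U)\gg X$. Theorem \ref{corkeyy2} then delivers the three spectral equalities, identifying $\sigma(A,X)=\sigma(\mathcal{F}_q(U),X)$ (using that $\mathfrak{b}$ is localizing), while $\sigma(A,X)=\sigma(T,S)$ since $\mathfrak{A}_q\to A$ is a localization so the trivial-module transversalities over $\mathfrak{A}_q$ and $A$ coincide. The remaining inclusion $\sigma(T,S)\subseteq\sigma_{\operatorname{P}}(\mathcal{F}_q,X)$ is Corollary \ref{corM1}.

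For the Banach version, the key extra input is that every left Banach $A$-module $X$ is $\mathcal{F}_q$-local in the sense of Subsection \ref{subsecSSA}, which is the Banach-level argument behind \cite[Theorem 6.3]{Dosi25} extended from the one-line sheaves $\mathcal{O}[[y]]$ and $[[x]]\mathcal{O}$ to their fibered product $\mathcal{F}_q$. Once that is in place, Proposition \ref{propLocG} yields $\sigma_{\operatorname{P}}(\mathcal{F}_q,X)\subseteq\sigma(\mathcal{F}_q^{\sim}\cap\mathcal{T},X)^{-}=\sigma(A,X)^{-}=\sigma(T,S)^{-\mathfrak{q}}$, and the reverse inclusion $\sigma(T,S)\subseteq\sigma_{\operatorname{P}}(\mathcal{F}_q,X)$ together with closedness of $\sigma_{\operatorname{P}}(\mathcal{F}_q,X)$ in the $\mathfrak{q}$-topology gives equality. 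The main obstacle will be exactly this $\mathcal{F}_q$-locality step: one has to show that pointwise vanishing of $\mathbb{C}(\gamma)\perp X$ on a $q$-open $V$ forces the fibered-product algebra $\mathcal{F}_q(V)$ to be transverse to $X$, which requires combining the one-variable Banach functional calculus with the formal power series stalks in the complementary direction. Everything else is a direct deployment of previously established machinery.
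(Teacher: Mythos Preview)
Your proposal is correct and follows essentially the same route as the paper: establish the \v{C}ech property via Proposition~\ref{propAcyclic} using $\mathcal{F}_q$-acyclicity of $q$-open sets, reduce $\sigma(\mathcal{F}_q,X)$ to $\sigma_{\operatorname{P}}(\mathcal{F}_q,X)$ via Proposition~\ref{remPut1}, obtain the $\mathcal{F}_q(U)$-module structure and dominance via the functional calculus machinery plus Lemma~\ref{lemDom12}, and then read off the spectral identities from Theorem~\ref{corkeyy2} and Proposition~\ref{propLocG}/Corollary~\ref{corM1}. Two cosmetic remarks: for the acyclicity step you only need that finite intersections from $\mathfrak{b}$ are $q$-open (not necessarily Runge), and the $\mathcal{F}_q$-locality of Banach modules is cited in the paper directly as \cite[Theorem~6.5]{Dosi25} rather than derived anew from the one-line cases.
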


\begin{proof}
First notice that every finite intersection from $\mathfrak{b}$ being a
$q$-open subset turns out to be $\mathcal{F}_{q}$-acyclic. Moreover,
$\mathbb{C}_{xy}$ is $\mathcal{F}_{q}$-acyclic too. By Proposition
\ref{propAcyclic}, we deduce that $\mathcal{F}_{q}$ is a \v{C}ech $A$-category
with the nuclear, localizing basis $\mathfrak{b}$. The $\mathcal{F}_{q}%
$-spectrum $\sigma\left(  \mathcal{F}_{q},X\right)  $ is reduced to the
Putinar spectrum $\sigma_{\operatorname{P}}\left(  \mathcal{F}_{q},X\right)  $
thanks to Proposition \ref{remPut1}.

Now let $U\subseteq\mathbb{C}_{xy}$ be a Runge $q$-open subset containing
$\sigma_{\operatorname{P}}\left(  \mathcal{F}_{q},X\right)  $, and put
$\mathcal{A=F}_{q}\left(  U\right)  $. Since $\iota_{U}:A\rightarrow
\mathcal{A}$ is a localization \cite{DosiLoc}, it follows that $\left(
\mathcal{A},\iota_{U}\right)  \gg X$ holds iff $X$ has a left $\mathcal{A}%
$-module structure extending its original $A$-module one (see Lemma
\ref{lemDom12}). But $X$ has that left $\mathcal{A}$-module structure thanks
to Corollary \ref{corkeyy1}, therefore $\left(  \mathcal{A},\iota_{U}\right)
\gg X$ holds.

As above, the canonical embedding $\mathcal{O}_{q}\left(  \mathbb{C}%
_{xy}\right)  \rightarrow A$ is a localization too. It follows that
$\sigma\left(  T,S\right)  =\sigma\left(  \mathcal{O}_{q}\left(
\mathbb{C}_{xy}\right)  ,X\right)  =\sigma\left(  A,X\right)  $, and
$\sigma\left(  A,X\right)  \subseteq\sigma_{\operatorname{P}}\left(
\mathcal{F}_{q},X\right)  $ thanks to Proposition \ref{propLocG}. If $X$ is a
left Banach $q$-module $X$ then it is an $\mathcal{F}_{q}$\textit{-}local left
$A$-module \cite[Theorem 6.5]{Dosi25}. By Proposition \ref{propLocG}, we
obtain that $\sigma_{\operatorname{P}}\left(  \mathcal{F}_{q},X\right)
=\sigma\left(  T,S\right)  ^{-\mathfrak{q}}$ with respect to the topology of
the point completion $\mathcal{F}_{q}^{\sim}$, which is just the
$\mathfrak{q}$-topology of $\mathbb{C}_{xy}$ (see Lemma \ref{lemAG1}).

Finally, taking into account $\left(  \mathcal{A},\iota_{U}\right)  \gg X$ and
$\sigma\left(  T,S\right)  \subseteq\sigma_{\operatorname{P}}\left(
\mathcal{F}_{q},X\right)  \subseteq U$, one can use the spectral mapping
Theorem \ref{corkeyy2}. Namely, the equalities $\sigma_{\operatorname{P}%
}\left(  \mathcal{F}_{q}|_{U},X\right)  =\sigma_{\operatorname{P}}\left(
\mathcal{F}_{q},X\right)  $ and $\sigma\left(  T,S\right)  =\sigma\left(
\mathcal{F}_{q}\left(  U\right)  ,X\right)  $ hold.
\end{proof}

Now we can prove the key result on the $\mathcal{F}_{q}$-functional calculus
for a Fr\'{e}chet space representation of the contractive quantum plane
$\mathfrak{A}_{q}$.

\begin{theorem}
\label{thcoreFC2}Let $X$ be a left Fr\'{e}chet $A$-module given by an operator
couple $\left(  T,S\right)  $ with $TS=q^{-1}ST$, $\left\vert q\right\vert
<1$, and let $U\subseteq\mathbb{C}_{xy}$ be a $\mathfrak{q}$-open subset
containing the Putinar spectrum $\sigma_{\operatorname{P}}\left(
\mathcal{F}_{q},X\right)  $. Then $X$ turns out to be a left $\mathcal{F}%
_{q}\left(  U\right)  $-module lifting its $A$-module structure. In
particular, if $X$ is a left Banach $A$-module with the nilpotent $TS$, and
$U$ is a $\mathfrak{q}$-open subset containing the $\mathfrak{q}$-closure
$\sigma\left(  T,S\right)  ^{-\mathfrak{q}}$ of the Taylor spectrum
$\sigma\left(  T,S\right)  $, then there is a continuous functional calculus
$\mathcal{F}_{q}\left(  U\right)  \rightarrow\mathcal{B}\left(  X\right)  $,
$f\mapsto f\left(  T,S\right)  $.
\end{theorem}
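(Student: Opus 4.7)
The plan is to derive Theorem \ref{thcoreFC2} by specializing the general \v{C}ech functional calculus (Theorem \ref{thcoreFC}) to the concrete standard analytic geometry $(\mathbb{C}_{xy},\mathcal{F}_q)$, using the machinery already packaged in Proposition \ref{propQP1}.

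First, I would recall that Proposition \ref{propQP1} establishes exactly the structural hypotheses needed: $\mathcal{F}_q$ is a \v{C}ech $A$-category equipped with a nuclear localizing basis $\mathfrak{b}$ of Runge $\mathfrak{q}$-open subsets, and the canonical embedding $\mathfrak{A}_q \to A$ is a localization so that $A$ is of finite type. In particular every left Fr\'echet $A$-module $X$ is automatically finite-free (Subsection \ref{subsecTrans}). Since $(\mathbb{C}_{xy},\mathcal{F}_q)$ is a standard analytic geometry of $A$, Proposition \ref{remPut1} yields $\sigma(\mathcal{F}_q,X)=\sigma_{\operatorname{P}}(\mathcal{F}_q,X)\subseteq\mathbb{C}_{xy}$, so the hypothesis that $U$ contains the Putinar spectrum is equivalent to $\sigma(\mathcal{F}_q,X)\subseteq U$. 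Hence Theorem \ref{thcoreFC} applies directly: the $A$-action on $X$ lifts to a left Fr\'echet $\mathcal{F}_q(U)$-module structure via the restriction morphism $\iota_U:A\to\mathcal{F}_q(U)$.

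For the Banach case with nilpotent $TS$, the first step is to upgrade the $\mathfrak{A}_q$-action on $X$ into an $A$-action. Here I would invoke the universal property of $A=\widehat{\mathfrak{A}_q}^{\mathrm{N}}$ from Subsection \ref{subsecMEQ}: since $TS$ is nilpotent, the Banach subalgebra of $\mathcal{B}(X)$ generated by $T,S$ is a Banach N-algebra, so the homomorphism $\mathfrak{A}_q\to\mathcal{B}(X)$, $x\mapsto T,\ y\mapsto S$, factors uniquely and continuously through $A$, making $X$ a left Banach $A$-module. Proposition \ref{propQP1} then identifies $\sigma_{\operatorname{P}}(\mathcal{F}_q,X)=\sigma(T,S)^{-\mathfrak{q}}$. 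Consequently, whenever $U$ is a $\mathfrak{q}$-open neighborhood of $\sigma(T,S)^{-\mathfrak{q}}$, the first part of the theorem delivers a left $\mathcal{F}_q(U)$-module structure on $X$ lifting the $A$-action. Because $X$ is Banach, this module action is equivalent to a continuous algebra homomorphism $\mathcal{F}_q(U)\to\mathcal{B}(X)$, which we denote $f\mapsto f(T,S)$.

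The main subtlety I anticipate is not in applying Theorem \ref{thcoreFC}, which is essentially plug-and-play once Proposition \ref{propQP1} is in place, but rather in the reduction step for the Banach case: one must be careful that nilpotency of $TS$ (and not some stronger condition on $T$ or $S$ separately) is exactly what the class defining $\widehat{\mathfrak{A}_q}^{\mathrm{N}}$ requires, so that the lift $\mathfrak{A}_q\to A\to\mathcal{B}(X)$ exists and is continuous. Once this identification is made, the $\mathfrak{q}$-closure $\sigma(T,S)^{-\mathfrak{q}}$ replaces $\sigma_{\operatorname{P}}(\mathcal{F}_q,X)$ cleanly, and no further homological input is needed beyond Theorem \ref{thcoreFC} and Proposition \ref{propQP1}.
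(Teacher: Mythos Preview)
Your proposal is correct and follows essentially the same route as the paper: invoke Proposition \ref{propQP1} to get the \v{C}ech $A$-category structure with nuclear basis, use Proposition \ref{remPut1} to identify $\sigma(\mathcal{F}_q,X)$ with $\sigma_{\operatorname{P}}(\mathcal{F}_q,X)$, apply Theorem \ref{thcoreFC}, and for the Banach case use $A=\widehat{\mathfrak{A}_q}^{\mathrm{N}}$ together with the spectrum identification $\sigma_{\operatorname{P}}(\mathcal{F}_q,X)=\sigma(T,S)^{-\mathfrak{q}}$ from Proposition \ref{propQP1}. One small streamlining: rather than arguing that the closed subalgebra generated by $T,S$ is a Banach N-algebra (which requires checking that the two-sided ideal generated by $TS$ is nilpotent, not just $TS$ itself), you can cite directly the characterization in Subsection \ref{subsecMEQ} that $\widehat{\mathfrak{A}_q}^{\mathrm{N}}$ is the envelope with respect to all homomorphisms $\kappa$ with $\kappa(xy)$ nilpotent; this gives the factorization through $A$ immediately.
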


\begin{proof}
By Proposition \ref{propQP1}, $\mathcal{F}_{q}$ is a \v{C}ech category with
its nuclear basis $\mathfrak{b}$. The $q$-open subset $U$ corresponds to an
open subset category in $\mathcal{F}_{q}$ (see Proposition \ref{corCes1})
\ Moreover, the Putinar spectrum $\sigma_{\operatorname{P}}\left(
\mathcal{F}_{q},X\right)  $ corresponds to the $\mathcal{F}_{q}$-spectrum
$\sigma\left(  \mathcal{F}_{q},X\right)  $ thanks to Proposition
\ref{remPut1}. Using Theorem \ref{thcoreFC}, we deduce that $X$ turns out to
be a left $\mathcal{F}_{q}\left(  U\right)  $-module lifting its $A$-module
structure. The rest follows from Proposition \ref{propQP1} and the fact that
$A=$ $\widehat{\mathfrak{A}_{q}}^{\text{N}}$.
\end{proof}

\begin{remark}
Thus for the $\mathcal{F}_{q}$-functional calculus problem for the
$q$-operator couple $T,S$ acting on a Banach space $X$, the $\mathfrak{q}%
$-closure $\sigma\left(  T,S\right)  ^{-\mathfrak{q}}$ of $\sigma\left(
T,S\right)  $ plays a crucial role rather than the Taylor spectrum
$\sigma\left(  T,S\right)  $ itslef. More concrete examples of these spectra
and their closures were considered in \cite{Dosi242} and \cite{DosiSS}.
\end{remark}

The canonical morphisms of the multinormed envelopes of $\mathfrak{A}_{q}$
from Subsection \ref{subsecMEQ} define in turn the morphisms of the related
\v{C}ech\textit{ }categories (or analytic geometries) considered above.
Namely, using Propositions \ref{propCex2}, \ref{propCex3} \ref{propQP1} and
Remark \ref{remxO}, we have the commutative diagrams
\[%
\begin{array}
[c]{ccccc}%
\begin{array}
[c]{c}%
\\
\\
\end{array}%
\begin{array}
[c]{c}%
\\
\\
\end{array}%
\begin{array}
[c]{c}%
\\
\\
\swarrow
\end{array}
&  &
\begin{array}
[c]{c}%
\widehat{\mathfrak{A}_{q}}^{\text{N}}\\
\downarrow\\
\mathcal{F}_{q}%
\end{array}
&  &
\begin{array}
[c]{c}%
\\
\\
\searrow
\end{array}%
\begin{array}
[c]{c}%
\\
\\
\end{array}%
\begin{array}
[c]{c}%
\\
\\
\end{array}
\\
& \swarrow &  & \searrow & \\
\widehat{\mathfrak{A}_{q}}^{y}\rightarrow\mathcal{O}\left[  \left[  y\right]
\right]  &  &  &  & \left[  \left[  x\right]  \right]  \mathcal{O}%
\leftarrow\widehat{\mathfrak{A}_{q}}^{x}\\
& \searrow &  & \swarrow & \\%
\begin{array}
[c]{c}%
\\
\\
\end{array}%
\begin{array}
[c]{c}%
\\
\\
\end{array}%
\begin{array}
[c]{c}%
\searrow\\
\\
\end{array}
&  &
\begin{array}
[c]{c}%
\mathbb{C}_{q}\left[  \left[  x,y\right]  \right] \\
\uparrow\\
\widehat{\mathfrak{A}_{q}}^{\text{RN}}%
\end{array}
&  &
\begin{array}
[c]{c}%
\swarrow\\
\\
\end{array}%
\begin{array}
[c]{c}%
\\
\\
\end{array}%
\begin{array}
[c]{c}%
\\
\\
\end{array}
\end{array}
\]
of the canonical homomorphisms and the category morphisms.

Finally, let us confirm that in the case of the universal enveloping algebra
$\mathfrak{A}=\mathcal{U}\left(  \mathfrak{g}\right)  $ of a nilpotent Lie
algebra $\mathfrak{g}$, the proposed framework including Theorem
\ref{thcoreFC} result in the complex analytic geometry of
$A=\widehat{\mathfrak{A}}^{\text{N}}$ and the related functional calculus
considered in \cite{DosJOT10}.

\end{document}